\documentclass[11pt]{amsart}

\usepackage[utf8]{inputenc}
\usepackage[T1]{fontenc}
\usepackage{lmodern}

\usepackage{amsmath,amssymb,amsthm,mathtools}
\usepackage{mathrsfs}
\usepackage{enumerate}
\usepackage{enumitem}
\usepackage{hyperref}
\usepackage{tikz}
\usetikzlibrary{arrows.meta,calc,decorations.pathmorphing,positioning}

\usepackage{graphicx}
\usepackage{booktabs}
\usepackage{microtype}
\usepackage[nameinlink]{cleveref}

\usepackage[a4paper,margin=1in]{geometry}

\numberwithin{equation}{section}

\newtheorem{theorem}{Theorem}[section]
\newtheorem{proposition}[theorem]{Proposition}
\newtheorem{lemma}[theorem]{Lemma}
\newtheorem{corollary}[theorem]{Corollary}

\theoremstyle{definition}
\newtheorem{definition}[theorem]{Definition}
\newtheorem{example}[theorem]{Example}

\theoremstyle{remark}
\newtheorem{remark}[theorem]{Remark}

\newcommand{\R}{\mathbb{R}}
\newcommand{\Rp}{\mathbb{R}_{>0}}

\newcommand{\Rc}{\mathbb{R}_{\mathrm{cl}}} 
\newcommand{\Hnum}{H}                      
\newcommand{\Sset}{S}                      
\newcommand{\Snon}{S^{\times}}             
\newcommand{\sgn}{\operatorname{sgn}}
\newcommand{\AssocCurv}{\boldsymbol{\kappa}} 

\newcommand{\Kamb}{K}                      
\newcommand{\op}{\boxplus}                 
\newcommand{\sop}{\oplus_S}                
\newcommand{\Eemb}{E}                      
\newcommand{\iemb}{i}                      
\newcommand{\Cfun}{C}                      
\newcommand{\idH}{1_H}

\title[Three-sign cancellation hypernumbers]{Three-sign cancellation hypernumber systems and associator curvature}

\author{Jaehwan Kim}
\address{Hankuk Academy of Foreign Studies, Yongin-si, Gyeonggi-do 17035, Republic of Korea}
\email{020080@hafs.hs.kr}

\subjclass[2020]{16Y99, 20N20, 14T05}
\keywords{hyperaddition, hypergroup, hyperfield, cancellation, associator curvature}
\date{\today}

\begin{document}

\begin{abstract}
We introduce and study a three--sign cancellation hypernumber system
$H$ which extends the real field by adjoining a third sign $\Lambda$.
The underlying set is $H=\{0\}\cup\{+,-,\Lambda\}\times\mathbb{R}_{>0}$,
with a single–valued multiplication and a hyperaddition $\boxplus$
designed to encode cancellation phenomena between positive and negative
reals. The classical real line embeds as a genuine subfield
$\mathbb{R}_{\mathrm{cl}}\subset H$, and all field operations agree
with the usual ones on $\mathbb{R}$.

The additive structure of $H$ is almost associative but not a canonical
hypergroup. We give an explicit description of where associativity
fails and compute, for triples of the form $(+,a),(-,b),(\Lambda,c)$,
a closed formula for the associativity defect
\[
  \AssocCurv(a,b,c)=2\min(a,b)=a+b-|a-b|,
\]
which coincides with the loss of absolute value when adding $a$ and
$-b$ in $\mathbb{R}$. To explain this behaviour, we construct an
ambient ``cancellation monoid'' $(K,\oplus)$ on $\mathbb{R}\times\mathbb{R}_{\ge0}$
which is strictly associative and records both real sums and accumulated
cancellation mass. We prove that $H$ cannot be recovered from $K$ by
any simple projection, and formulate an ambient reconstruction problem.

In addition, scalar multiplication by real numbers (defined via the embedded copy of $\mathbb{R}$) distributes over $\op$, and the sign--layer admits a canonical hypergroup envelope governing the possible signs of hyper\-sums. The results provide a controlled example of a non\-associative
hyperaddition sitting over the real field and suggest several directions
for multi\-sign generalizations and connections with hyperfields and
tropical geometry.
\end{abstract}

\maketitle

\tableofcontents


\section{Introduction}
\label{sec:intro}

The arithmetic of the real field merges \emph{sign} and \emph{magnitude} in a way
that is inherently insensitive to cancellation: whenever $x>0$ and $y<0$ with
$|x|=|y|$, the sum $x+y$ vanishes and no record of the opposing contributions
remains.  Hyperstructures (hypergroups, hyperrings, hyperfields) offer a natural
language for retaining such ambiguity by allowing the ``sum'' of two elements to
be \emph{set--valued}, a viewpoint that goes back to Marty and Krasner and has
been developed in modern treatments and applications
\cite{Marty1934Hypergroup,Golzio2018Survey,Krasner1983Hyperrings,Vougiouklis1994Hyperstructures,CorsiniLeoreanu2003Applications,DavvazLeoreanuFotea2007HyperringTheory,ConnesConsani2010Monoids,Jun2018AGHyperrings,Viro2010Hyperfields}.
In particular, hyperfields have become standard tools in tropical geometry and
its signed variants, and in matroid theory over hyperfields
\cite{BakerBowler2019MatroidsHyperfields,Lorscheid2022TropicalHyperfield,MaxwellSmith2024Extensions}.

\medskip
\noindent\textbf{Three--sign cancellation hypernumbers.}
We introduce a simple three--sign system that explicitly records the appearance
of cancellation.  The underlying set is
\[
H=\{0\}\cup\{+,-,\Lambda\}\times\Rp,
\]
where each nonzero element carries a sign $\sigma\in\{+,-,\Lambda\}$ and a size
$a\in\Rp$.  The additional symbol $\Lambda$ is meant to encode the
\emph{cancellation layer}.  We define a commutative unital hyperaddition
$\boxplus$ on $H$ (Definition~\ref{def:H-hyperadd}) that is designed so that, in
mixed sign interactions, the output retains information about the amount of
cancellation that would be lost in~$\R$.

\medskip
\begin{theorem}[Associator curvature for $(+,a),(-,b),(\Lambda,c)$]
\label{thm:intro-defect}
Let $a,b,c>0$ and set
\[
  x=(+,a),\quad y=(-,b),\quad z=(\Lambda,c)\in\Hnum.
\]
Then both bracketings are singletons in the $\Lambda$--sector:
\[
  (x\op y)\op z = \{(\Lambda,\,c+|a-b|)\},\qquad
  x\op (y\op z) = \{(\Lambda,\,a+b+c)\}.
\]
In particular, the associator curvature (associativity defect)
\[
  \AssocCurv(a,b,c):=(a+b+c)-(c+|a-b|)=a+b-|a-b|=2\min(a,b)
\]
is independent of $c$.
\end{theorem}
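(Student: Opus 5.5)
The proof will be a direct computation from the defining table of $\op$ in Definition~\ref{def:H-hyperadd}, evaluating each bracketing in turn; the rules are not yet displayed here, so I will rely on the properties the statement implicitly requires. I will use three of them. First, on the classical part $\Rc$ (signs $+,-$), $\op$ is ordinary real addition, with the cancellation case $a=b$ giving $0$. Second, adding a $\Lambda$-element to any nonzero element adds magnitudes and lands in the $\Lambda$-sector: $(\sigma,u)\op(\Lambda,c)=\{(\Lambda,u+c)\}$. Third, $0$ is the neutral element.

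\textbf{Left bracketing.} First compute $x\op y=(+,a)\op(-,b)$, splitting into the cases $a>b$, $a<b$ and $a=b$. The results are $\{(+,a-b)\}$, $\{(-,b-a)\}$ and $\{0\}$ respectively. In each case the set is a singleton $\{w\}$ with $|w|=|a-b|$. Then extend $\op$ to sets elementwise and apply the $\Lambda$-rule, or neutrality of $0$ when $a=b$. This gives $(x\op y)\op z=\{(\Lambda,c+|a-b|)\}$ uniformly in all three cases. The case $a=b$ uses $0\op(\Lambda,c)=\{(\Lambda,c)\}$, which matches $c+|a-b|=c$.

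\textbf{Right bracketing.} Compute $y\op z=(-,b)\op(\Lambda,c)=\{(\Lambda,b+c)\}$. Then compute $x\op(\Lambda,b+c)=\{(\Lambda,a+b+c)\}$, using the same $\Lambda$-absorption rule with a $+$ input. Subtracting the two magnitudes gives $\AssocCurv=a+b-|a-b|$. The identity $a+b-|a-b|=2\min(a,b)$ is checked in the two cases $a\ge b$ and $a<b$. Independence of $c$ is then immediate, since $c$ cancels.

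\textbf{Main obstacle.} The only delicate point is whether the definition actually makes $(\pm,u)\op(\Lambda,c)$ a \emph{singleton} with additive magnitude. A cancellation-encoding rule might instead produce a set, or a different magnitude, when the signs differ. I will first check the exact clause of Definition~\ref{def:H-hyperadd} for the pairs $(+,\Lambda)$ and $(-,\Lambda)$. If that clause is instead set-valued, I will intersect the result with the $\Lambda$-sector, as the phrase ``singletons in the $\Lambda$--sector'' suggests, and verify that the $\Lambda$-component is exactly $\{(\Lambda,u+c)\}$. I will also confirm the boundary case $a=b$ separately, since it passes through $0$.
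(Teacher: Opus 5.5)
Your proposal is correct and matches the paper's proof (Lemma~\ref{lem:assoc-bracketings} together with Proposition~\ref{prop:assoc-defect-2min}): the three-case split of $x\op y$, then clause (4) or neutrality of $0$; the two-step right bracketing via clause (4); and the two-case check of $a+b-|a-b|=2\min(a,b)$. The obstacle you flag does not arise, because clause (4) of Definition~\ref{def:H-hyperadd} gives exactly $(\pm,u)\op(\Lambda,c)=\{(\Lambda,u+c)\}$, so your fallback of intersecting with the $\Lambda$-sector is never needed.
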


This invariant functions as a quantitative proxy for ``how much'' associativity
is obstructed by cancellation.

\begin{figure}[t]
\centering
\begin{tikzpicture}[>=Stealth, node distance=22mm]
\node (x) {$x$};
\node[right=of x] (y) {$y$};
\node[right=of y] (z) {$z$};

\node[below=12mm of y] (xy) {$(x\boxplus y)$};
\node[below=12mm of z] (yz) {$(y\boxplus z)$};

\node[below=12mm of xy] (l) {$(x\boxplus y)\boxplus z$};
\node[below=12mm of yz] (r) {$x\boxplus(y\boxplus z)$};

\draw[->] (x) -- (xy);
\draw[->] (y) -- (xy);
\draw[->] (y) -- (yz);
\draw[->] (z) -- (yz);
\draw[->] (xy) -- (l);
\draw[->] (z) -- (l);
\draw[->] (x) -- (r);
\draw[->] (yz) -- (r);

\draw[dashed, <->] (l) -- node[midway, right] {$\AssocCurv$} (r);
\end{tikzpicture}
\caption{Two bracketings of a triple hyper--sum.  Their discrepancy is measured
by the associator curvature $\AssocCurv$.}
\label{fig:assoc-diagram}
\end{figure}
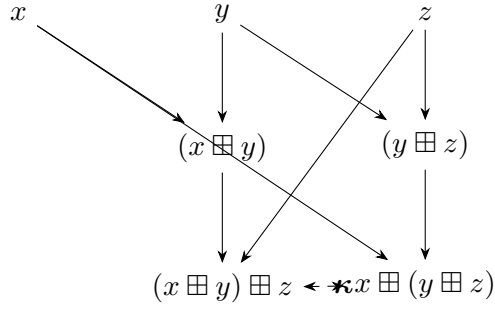

\medskip
\noindent\textbf{Ambient cancellation monoid and non--reconstruction.}
To explain the structure behind $\AssocCurv$, we construct an \emph{associative}
ambient cancellation monoid $(K,\oplus)$ encoding simultaneously a real part and
a cumulative cancellation mass, defined via the basic functional
\[
\Cfun(r_1,r_2)=|r_1|+|r_2|-|r_1+r_2|.
\]
There is a natural embedding $\Eemb:H\to K$ which is compatible with sign data.
However, we prove that $H$ cannot be recovered from $K$ by any
bracket--independent decoding procedure (Theorem~\ref{thm:nogo-ambient}); this is
a precise obstruction to ``associativizing'' the system without losing the
three--sign information.

\medskip
\noindent\textbf{Sign layer.}
On the sign set $\Sset=\{0,+,-,\Lambda\}$ we construct a canonical hypergroup
operation $\sop$ (Definition~\ref{def:S-hyperoperation},
Theorem~\ref{thm:S-canonical-hypergroup}) and show that the sign image of a
hyper--sum is described by an envelope property
(Section~\ref{sec:sign-envelope}).

\medskip
\noindent\textbf{Organization.}
Section~\ref{sec:prelim} recalls background on hyperoperations and hyperfields.
Section~\ref{sec:3CHS} defines the three--sign system $H$ and its basic
operations.  Section~\ref{sec:assoc} develops associator curvature and controlled
non--associativity.  Section~\ref{sec:sign-envelope} treats the sign hypergroup
and sign envelopes, while Section~\ref{sec:ambient} develops the ambient
cancellation monoid and proves the non--reconstruction theorem.
We conclude in Section~\ref{sec:conclusion} with perspectives and open
directions.
\section{Preliminaries and hyperstructure background}
\label{sec:prelim}

\subsection{Basic notation and real numbers}

We recall here the standard notation for real numbers and fix a few
elementary facts that will be used repeatedly throughout the paper.

\begin{definition}[Real line and subsets]\label{def:basic-R}
We write $\R$ for the field of real numbers, and we use the following
standard subsets:
\[
  \Rp := \{x\in\R : x>0\},\qquad
  \R_{\ge0} := \{x\in\R : x\ge0\}.
\]
We denote by
\[
  |\cdot|:\R\longrightarrow\R_{\ge0}
\]
the usual absolute value on $\R$.
\end{definition}

\begin{definition}[Elementary sign notation]\label{def:basic-signs}
We use the symbols
\[
  +,\quad -,\quad \Lambda
\]
to denote three distinguished ``signs''.  It is convenient to package
them into the sets
\[
  \Sset := \{0,+,-,\Lambda\},\qquad
  \Snon := \{+,-,\Lambda\}.
\]
Here $0$ is merely a symbol indicating the absence of sign; it should
not be confused with the real number $0\in\R$, although in many
contexts the identification is harmless.
\end{definition}

Later we will attach elements of $\Snon$ as sign labels to positive
real magnitudes in order to form the three--sign hypernumber system
$\Hnum$.  For the classical real line, we will use the usual sign
function on $\R$.

\begin{definition}[Sign of a real number]\label{def:basic-real-sign}
Define $\operatorname{sgn}_\R:\R\to\{+,-,0\}\subset\Sset$ by
\[
  \operatorname{sgn}_\R(x) :=
  \begin{cases}
    +, & x>0,\\
    0, & x=0,\\
    -, & x<0.
  \end{cases}
\]
\end{definition}

We record for later use the following elementary properties of the
absolute value.  They are classical but play a central role in the
construction of the ambient cancellation monoid that will appear in
Section~\ref{sec:ambient}.

\begin{lemma}[Basic properties of the absolute value]\label{lem:basic-abs}
For all $x,y\in\R$ one has:
\begin{enumerate}
  \item $|x|\ge0$, and $|x|=0$ if and only if $x=0$.
  \item $|xy| = |x|\cdot|y|$.
  \item $|x+y|\le|x|+|y|$ (triangle inequality).
\end{enumerate}
\end{lemma}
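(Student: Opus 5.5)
The three assertions of Lemma~\ref{lem:basic-abs} are classical, and I will prove them directly from the definition $|x|=x$ for $x\ge0$ and $|x|=-x$ for $x<0$. The only tool is the order axioms of $\R$: the product of two nonnegatives is nonnegative, and $-x>0$ whenever $x<0$. The plan is a case analysis on the signs of $x$ and $y$, following the three-valued function $\operatorname{sgn}_\R$ of Definition~\ref{def:basic-real-sign}.

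For (1), I split according to $\operatorname{sgn}_\R(x)$. If $x>0$ then $|x|=x>0$. If $x<0$ then $|x|=-x>0$. If $x=0$ then $|x|=0$. This gives both nonnegativity and the characterization of the zero set.

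For (2), I first note that $|{-x}|=|x|$, which follows immediately from the cases. Replacing $x$ by $-x$ and $y$ by $-y$ changes neither side, so I may assume $x,y\ge0$. Then $xy\ge0$, and $|xy|=xy=|x||y|$.

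For (3), the key intermediate fact is
\[
-|x|\le x\le |x|,
\]
which again holds by cases on the sign of $x$. Adding this to the same inequality for $y$ gives $-(|x|+|y|)\le x+y\le |x|+|y|$. Since $|x+y|$ equals either $x+y$ or $-(x+y)$, both of which are at most $|x|+|y|$, the triangle inequality follows.

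There is no real obstacle here. The only step requiring a moment's care is organizing (3) through the two-sided bound $-|t|\le t\le|t|$, rather than running a four-way sign split on $x$, $y$ and $x+y$.
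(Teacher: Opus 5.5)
Your proof is correct, but it takes a different route from the paper's. The paper defines $|x|:=\sqrt{x^2}$. With that definition, (1) is the fact that $\sqrt{t}\ge0$ with equality only at $t=0$, and (2) is the computation $\sqrt{x^2y^2}=\sqrt{x^2}\,\sqrt{y^2}$. Part (3) is obtained by squaring, $|x+y|^2=x^2+2xy+y^2\le x^2+2|x||y|+y^2=(|x|+|y|)^2$, and then taking nonnegative square roots.

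You work from the piecewise definition and use only the ordered-field axioms. The reduction $|{-x}|=|x|$ handles (2), and the two-sided bound $-|t|\le t\le|t|$ handles (3) without any squaring.

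The paper's argument is more compact and needs no sign cases for (2). However, it silently relies on properties of the square root: multiplicativity on $\R_{\ge0}$, and the fact that $u^2\le v^2$ with $u,v\ge0$ forces $u\le v$. It also uses $xy\le|x||y|$ without comment, which is exactly your bound $t\le|t|$ applied to $t=xy$, combined with (2).

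Your version makes every step explicit. It also holds verbatim in any ordered field, including ones such as $\mathbb{Q}$ where square roots need not exist. Since the paper only refers to ``the usual absolute value'', either starting definition is legitimate.
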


\begin{proof}
These are the standard properties of the absolute value on $\R$.  For
completeness we recall the usual arguments:

(1) By definition $|x|=\sqrt{x^2}\ge0$ for all $x\in\R$, with equality
if and only if $x^2=0$, i.e.\ $x=0$.

(2) For all $x,y\in\R$,
\[
  |xy| = \sqrt{(xy)^2} = \sqrt{x^2y^2}
       = \sqrt{x^2}\,\sqrt{y^2}
       = |x|\cdot|y|.
\]

(3) The triangle inequality $|x+y|\le|x|+|y|$ can be proved, for
example, by squaring both sides and expanding:
\[
  |x+y|^2 = x^2+2xy+y^2
          \le x^2+2|x||y|+y^2
          = (|x|+|y|)^2,
\]
so $|x+y|\le|x|+|y|$ since both sides are nonnegative.
\end{proof}

The triangle inequality allows us to define a nonnegative ``cancellation
mass'' associated to a pair of real numbers.  This will later become
the second coordinate in the ambient monoid $\Kamb$.

\begin{definition}[Cancellation mass on $\R$]\label{def:basic-C}
For $r_1,r_2\in\R$ we define
\[
  \Cfun(r_1,r_2)
  := |r_1| + |r_2| - |r_1+r_2|\in\R_{\ge0}.
\]
\end{definition}

\begin{lemma}[Nonnegativity and symmetry of $\Cfun$]\label{lem:basic-C}
For all $r_1,r_2\in\R$ one has:
\begin{enumerate}
  \item $\Cfun(r_1,r_2)\ge0$.
  \item $\Cfun(r_1,r_2)=\Cfun(r_2,r_1)$.
\end{enumerate}
\end{lemma}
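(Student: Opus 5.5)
Both assertions of Lemma~\ref{lem:basic-C} follow directly from Definition~\ref{def:basic-C} combined with Lemma~\ref{lem:basic-abs}, so I would argue straight from the defining formula and not split into cases on the signs of $r_1,r_2$.

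For part (1), I take the triangle inequality of Lemma~\ref{lem:basic-abs}(3) with $x=r_1$ and $y=r_2$, which gives $|r_1+r_2|\le |r_1|+|r_2|$. Subtracting $|r_1+r_2|$ from both sides yields
\[
  \Cfun(r_1,r_2)=|r_1|+|r_2|-|r_1+r_2|\ge 0 .
\]
This also confirms the claim in Definition~\ref{def:basic-C} that $\Cfun$ takes values in $\R_{\ge0}$.

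For part (2), I use commutativity of addition in $\R$, both in the sum of absolute values and inside the absolute value. Since $|r_1|+|r_2|=|r_2|+|r_1|$ and $r_1+r_2=r_2+r_1$, I get $|r_1+r_2|=|r_2+r_1|$, and substituting into the definition gives $\Cfun(r_1,r_2)=\Cfun(r_2,r_1)$.

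There is no genuine obstacle here. The only point to get right is that part (1) invokes exactly the triangle inequality already recorded in Lemma~\ref{lem:basic-abs}(3), so no new estimate is needed.
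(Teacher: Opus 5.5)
Your proof is correct and follows the paper's argument: nonnegativity comes from the triangle inequality of Lemma~\ref{lem:basic-abs}(3), and symmetry comes from commutativity of addition, both inside and outside the absolute values.
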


\begin{proof}
(1) By the triangle inequality,
\[
  |r_1+r_2|\le|r_1|+|r_2|,
\]
so
\[
  \Cfun(r_1,r_2)=|r_1|+|r_2|-|r_1+r_2|\ge0.
\]

(2) Symmetry follows from the symmetry of absolute value and addition:
\[
  \Cfun(r_1,r_2)
  = |r_1|+|r_2|-|r_1+r_2|
  = |r_2|+|r_1|-|r_2+r_1|
  = \Cfun(r_2,r_1).
\]
\end{proof}

\begin{remark}[Cancellation mass as lost absolute value]\label{rem:basic-C}
If $r_1,r_2$ have the same sign, then $|r_1+r_2|=|r_1|+|r_2|$ and
$\Cfun(r_1,r_2)=0$.  If $r_1$ and $r_2$ have opposite signs, then
$\Cfun(r_1,r_2)$ measures exactly how much absolute value is lost when
adding them.  For example, when $r_1=a>0$ and $r_2=-b<0$ with $a,b>0$,
a short computation shows
\[
  \Cfun(a,-b) = a+b - |a-b|.
\]
We will later see that this quantity coincides with the associativity
defect in certain triples of hypernumbers.
\end{remark}

The notation and basic facts collected in this subsection will be used
throughout the paper.  In the next subsection we recall, in a similarly
concise fashion, the notions of hyperoperation, canonical commutative
hypergroup, and hyperfield, which provide the broader context for our
three--sign construction.


\subsection{Notation at a glance}\label{subsec:notation}

For the reader's convenience we collect the principal objects and symbols that
recur throughout the paper.

\begin{table}[ht]
\centering
\caption{Core notation used in the paper.}
\label{tab:notation}
\begin{tabular}{@{}ll@{}}
\toprule
Symbol & Meaning \\ \midrule
$\Rp$ & positive reals $\{x\in\R:x>0\}$ \\
$H$ & three--sign cancellation hypernumbers (Section~\ref{sec:3CHS}) \\
$\boxplus$ & hyperaddition on $H$ (Definition~\ref{def:H-hyperadd}) \\
$\Sset$ & sign set $\{0,+,-,\Lambda\}$ (Section~\ref{sec:sign-envelope}) \\
$\sop$ & sign hyperoperation on $\Sset$ (Definition~\ref{def:S-hyperoperation}) \\
$\AssocCurv$ & associator curvature measuring non--associativity (Section~\ref{subsec:assoc-defect-quantitative}) \\
$\Cfun$ & cancellation mass on $\R$ (Definition~\ref{def:cancellation-mass}) \\
$K$ & ambient cancellation monoid (Section~\ref{sec:ambient}) \\
$\oplus$ & associative ambient operation on $K$ (Definition~\ref{def:ambient-monoid}) \\
$\Eemb$ & embedding of $H$ into $K$ (Definition~\ref{def:ambient-representation}) \\
\bottomrule
\end{tabular}
\end{table}

\subsection{Hyperoperations and canonical hypergroups}
\label{subsec:hypergroups}

\noindent
\emph{Literature.}  Hypergroups were introduced by Marty and have since developed
into a broad hyperstructure theory; for historical context and modern
expositions see \cite{Golzio2018Survey,Massouros2023Borderline}.

We recall the basic language of hyperoperations and canonical
commutative hypergroups in the minimal form needed later.  Throughout
this subsection $X$ denotes a nonempty set.

\begin{definition}[Hyperoperation]\label{def:hyperoperation}
A \emph{hyperoperation} on $X$ is a map
\[
  \boxplus : X\times X \longrightarrow
             \mathcal{P}(X)\setminus\{\emptyset\},
\]
assigning to each pair $(x,y)\in X\times X$ a nonempty subset
$x\boxplus y\subset X$.  The pair $(X,\boxplus)$ is then called a
\emph{hypermagma}.
\end{definition}

It is customary to think of $x\boxplus y$ as the ``set of possible
sums'' of $x$ and $y$.  Ordinary binary operations are recovered as
special cases in which all these sets are singletons.

\begin{definition}[Commutative hypermagma]\label{def:comm-hypermagma}
A hypermagma $(X,\boxplus)$ is called \emph{commutative} if
\[
  x\boxplus y = y\boxplus x
  \qquad\text{for all }x,y\in X.
\]
\end{definition}

Associativity in the hyper setting is formulated in terms of unions of
sets.  Given $x,y,z\in X$, one considers the two subsets
\[
  (x\boxplus y)\boxplus z
  := \bigcup_{u\in x\boxplus y} u\boxplus z, \qquad
  x\boxplus(y\boxplus z)
  := \bigcup_{v\in y\boxplus z} x\boxplus v
\]
of $X$.

\begin{definition}[Associativity at a triple]\label{def:assoc-hyper}
A hypermagma $(X,\boxplus)$ is said to be \emph{associative at the
triple} $(x,y,z)\in X^3$ if
\[
  (x\boxplus y)\boxplus z
  \;=\; x\boxplus(y\boxplus z)
\]
as subsets of $X$.  It is called \emph{associative} if this holds for
all triples $(x,y,z)\in X^3$.
\end{definition}

In analogy with abelian groups, one often requires the existence of a
distinguished element playing the role of additive identity, together
with additive inverses.

\begin{definition}[Neutral element and inverses]\label{def:neutral-inverse}
Let $(X,\boxplus)$ be a hypermagma.
\begin{enumerate}
  \item An element $0\in X$ is called a \emph{neutral element} (or
        \emph{additive identity}) if
  \[
    0\boxplus x = x\boxplus 0 = \{x\}
    \qquad\text{for all }x\in X.
  \]
  \item Assume $0\in X$ is a neutral element.  An element $y\in X$ is
        called an \emph{additive inverse} of $x\in X$ if
  \[
    0 \in x\boxplus y.
  \]
  When such a $y$ exists and is unique, it will be denoted by $-x$.
\end{enumerate}
\end{definition}

Canonical commutative hypergroups arise when one imposes natural
axioms paralleling those of abelian groups.  We use the following
formulation, which is standard in the literature on hyperrings.

\begin{definition}[Canonical commutative hypergroup]\label{def:canonical-hypergroup}
A \emph{canonical commutative hypergroup} is a pair $(X,\boxplus)$
consisting of a nonempty set $X$ and a hyperoperation $\boxplus$ such
that:
\begin{enumerate}
  \item[\textnormal{(HG1)}] \textbf{Commutativity:}
  \[
    x\boxplus y = y\boxplus x
    \qquad\text{for all }x,y\in X.
  \]

  \item[\textnormal{(HG2)}] \textbf{Associativity:} for all
        $x,y,z\in X$ one has
  \[
    (x\boxplus y)\boxplus z
    = x\boxplus(y\boxplus z).
  \]

  \item[\textnormal{(HG3)}] \textbf{Neutral element:} there exists a
        distinguished element $0\in X$ such that
  \[
    0\boxplus x = x\boxplus 0 = \{x\}
    \qquad\text{for all }x\in X.
  \]

  \item[\textnormal{(HG4)}] \textbf{Unique inverses:} for each
        $x\in X$ there exists a unique element $y\in X$ such that
  \[
    0 \in x\boxplus y,
  \]
  and we denote this element by $-x$.

  \item[\textnormal{(HG5)}] \textbf{Reversibility:} for all
        $x,y,z\in X$ one has
  \[
    x\in y\boxplus z
    \quad\Longleftrightarrow\quad
    z \in x\boxplus(-y).
  \]
\end{enumerate}
\end{definition}

\begin{remark}\label{rem:canonical-hypergroup}
The reversibility axiom (HG5) is the characteristic feature of
canonical hypergroups and is crucial in the definition of hyperrings:
it ensures that additive translation behaves in a controlled way.  In a
canonical commutative hypergroup, the neutral element $0$ and the
inverse $-x$ are uniquely determined, and the inverse map $x\mapsto-x$
is an involution.  We will use canonical hypergroups mainly as a
reference class: the additive structure $(\Hnum,\op)$ of our
three--sign system is \emph{not} a canonical hypergroup, and part of
our analysis in later sections consists of pinpointing exactly which
of (HG2) and (HG5) fail.
\end{remark}

The basic example to keep in mind is that of an ordinary abelian group
viewed as a hypergroup with singletons as sums.

\begin{theorem}[Abelian groups as canonical hypergroups]\label{thm:group-hypergroup}
Let $(G,+)$ be an abelian group with neutral element $0$ and inverse
map $x\mapsto-x$.  Define a hyperoperation $\boxplus$ on $G$ by
\[
  x\boxplus y := \{x+y\}
  \qquad\text{for all }x,y\in G.
\]
Then $(G,\boxplus)$ is a canonical commutative hypergroup.
\end{theorem}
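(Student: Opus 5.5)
The plan is to check the five axioms (HG1)--(HG5) of Definition~\ref{def:canonical-hypergroup} one at a time. Each follows directly from the corresponding group axiom once we record one observation: for singleton-valued operations, the set-theoretic extension of $\boxplus$ reduces to ordinary addition. Concretely, for $x,y,z\in G$ we have $x\boxplus y=\{x+y\}$. The union defining $(x\boxplus y)\boxplus z$ therefore ranges over the single element $u=x+y$, which gives
\[
  (x\boxplus y)\boxplus z=\{(x+y)+z\},\qquad x\boxplus(y\boxplus z)=\{x+(y+z)\}.
\]
We also note that $\boxplus$ is a hyperoperation in the sense of Definition~\ref{def:hyperoperation}, since every $x\boxplus y$ is a nonempty subset of $G$.

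With this in hand, the first three axioms are immediate. (HG1) follows from $x+y=y+x$. (HG2) follows from the displayed formula together with associativity in $G$. (HG3) holds with the group identity $0$, because $0\boxplus x=\{0+x\}=\{x\}$, and symmetrically $x\boxplus 0=\{x\}$.

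For (HG4), the condition $0\in x\boxplus y=\{x+y\}$ is equivalent to $x+y=0$. In an abelian group this has the unique solution $y=-x$, the group inverse. So existence and uniqueness both hold, and the hypergroup inverse coincides with the group inverse.

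The only step requiring any attention is (HG5), and only because one must be careful about which inverse appears. By (HG4) the hypergroup inverse of $y$ is the group inverse $-y$, so $x\boxplus(-y)=\{x-y\}$. The statement $x\in y\boxplus z$ means $x=y+z$, and the statement $z\in x\boxplus(-y)$ means $z=x-y$. These two equations are equivalent in $G$ by adding $-y$, or respectively $y$, to both sides. This completes the verification; I expect no genuine obstacle beyond this bookkeeping.
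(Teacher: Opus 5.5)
Your proposal is correct and follows the paper's proof essentially verbatim: both verify (HG1)--(HG5) directly, reduce the set-theoretic bracketings to the singletons $\{(x+y)+z\}$ and $\{x+(y+z)\}$, identify the hypergroup inverse with the group inverse, and check reversibility via the equivalence $x=y+z \Leftrightarrow z=x-y$. Your remark that each $x\boxplus y$ is nonempty, so $\boxplus$ is a hyperoperation, is a small point the paper leaves implicit.
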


\begin{proof}
We check the axioms (HG1)--(HG5).

(HG1) For all $x,y\in G$ one has $x+y=y+x$ by commutativity of the
group, hence $x\boxplus y=\{x+y\}=\{y+x\}=y\boxplus x$.

(HG2) For all $x,y,z\in G$,
\[
  (x\boxplus y)\boxplus z
  = \bigcup_{u\in x\boxplus y} u\boxplus z
  = \{(x+y)+z\},
\]
and similarly
\[
  x\boxplus(y\boxplus z)
  = \bigcup_{v\in y\boxplus z} x\boxplus v
  = \{x+(y+z)\}.
\]
Since $(x+y)+z=x+(y+z)$ in $G$, the two sets coincide.  Thus
associativity holds.

(HG3) The group neutral element $0\in G$ satisfies
\[
  0\boxplus x = \{0+x\} = \{x\},
  \qquad
  x\boxplus 0 = \{x+0\} = \{x\},
\]
so $0$ is a neutral element in the sense of
Definition~\ref{def:neutral-inverse}.

(HG4) For each $x\in G$, the group inverse $-x$ satisfies
\[
  x\boxplus(-x) = \{x+(-x)\} = \{0\},
\]
so $0\in x\boxplus(-x)$.  If $y\in G$ is any other element with
$0\in x\boxplus y$, then $0=x+y$, hence $y=-x$ by uniqueness of
inverses in the group.  Thus each element has a unique additive
inverse in the hypergroup sense, and it coincides with its group
inverse.

(HG5) Finally, let $x,y,z\in G$.  We have
\[
  x\in y\boxplus z
  \quad\Longleftrightarrow\quad
  x\in\{y+z\}
  \quad\Longleftrightarrow\quad
  x=y+z,
\]
and
\[
  z\in x\boxplus(-y)
  \quad\Longleftrightarrow\quad
  z\in\{x+(-y)\}
  \quad\Longleftrightarrow\quad
  z=x-y.
\]
The two conditions are equivalent because $x=y+z$ if and only if
$z=x-y$ in the abelian group.  Hence reversibility holds.  All axioms
are satisfied, so $(G,\boxplus)$ is a canonical commutative hypergroup.
\end{proof}

\begin{example}[Sign hyperfield as a canonical hypergroup]\label{ex:sign-hyperfield}
Let $X=\{0,+,-\}$ and define a hyperoperation $\boxplus$ by the rules
\[
  0\boxplus x = x\boxplus 0 = \{x\},\quad
  +\boxplus + = \{+\},\quad
  -\boxplus - = \{-\},
\]
and
\[
  +\boxplus - = -\boxplus +
  = \{+,0,-\}.
\]
Then $(X,\boxplus)$ is the additive hypergroup underlying the standard
sign hyperfield.  One checks directly that it satisfies
(HG1)--(HG5), so it is a canonical commutative hypergroup.  In
Section~\ref{sec:sign-envelope} we will extend this example by adding
the extra sign $\Lambda$ and constructing a canonical hypergroup
structure $(\Sset,\sop)$ on $\Sset=\{0,+,-,\Lambda\}$ which serves as a
sign--layer envelope for the three--sign hypernumber system.
\end{example}

The notion of canonical commutative hypergroup is central in the
theory of hyperrings and hyperfields: a hyperring is a set equipped
with a canonical commutative hypergroup structure for addition
together with a compatible single--valued multiplication.  In this
paper, we will use these concepts mainly as a reference point.  The
additive structure $(\Hnum,\op)$ of our three--sign system is
commutative and admits unique inverses but fails to satisfy (HG2) and
(HG5) in general; precise counterexamples will be given in
Section~\ref{sec:assoc}.  On the other hand, we will construct a
canonical commutative hypergroup on the sign set $\Sset$ in
Section~\ref{sec:sign-envelope}, illustrating how canonical
hypergroups can still appear at the level of sign data even when the
full hypernumber system is non--associative.

\subsection{Hyperfields and the sign hyperfield}
\label{subsec:hyperfields}

\noindent
\emph{Literature.}  Hyperrings and hyperfields in the sense of Krasner and their
modern incarnations play a central role in tropical geometry, matroid theory,
and ``absolute'' approaches to arithmetic; see
\cite{Viro2010Hyperfields,ConnesConsani2010Monoids,Jun2021GeometryHyperfields,BakerBowler2019MatroidsHyperfields,Lorscheid2022TropicalHyperfield}
for representative entry points.

We briefly recall the notion of a hyperfield, which generalizes the
usual concept of a field by allowing addition to be multivalued.  In
this framework, the additive structure is a canonical commutative
hypergroup in the sense of Definition~\ref{def:canonical-hypergroup},
while multiplication remains an ordinary single–valued operation.

\begin{definition}[Hyperring and hyperfield]\label{def:hyperfield}
A \emph{hyperring} is a triple $(F,\oplus,\cdot)$ consisting of a
nonempty set $F$, a hyperoperation
\[
  \oplus : F\times F \longrightarrow
           \mathcal{P}(F)\setminus\{\emptyset\},
\]
and a single–valued multiplication
\[
  \cdot : F\times F \longrightarrow F,
\]
such that:
\begin{enumerate}
  \item The pair $(F,\oplus)$ is a canonical commutative hypergroup
        with neutral element $0\in F$.
  \item The set $F^\times := F\setminus\{0\}$ is a (not necessarily
        commutative) group under $\cdot$ with identity element $1$.
  \item Multiplication distributes over hyperaddition on both sides:
        for all $a,b,c\in F$ one has
  \[
    a\cdot(b\oplus c)
    \subseteq (a\cdot b)\oplus(a\cdot c),
    \qquad
    (b\oplus c)\cdot a
    \subseteq (b\cdot a)\oplus(c\cdot a),
  \]
  where $a\cdot(B):=\{a\cdot x : x\in B\}$ and
  $(B)\cdot a := \{x\cdot a : x\in B\}$ for subsets $B\subseteq F$.
  \item The element $0$ is absorbing for multiplication:
  \[
    0\cdot a = a\cdot 0 = 0
    \qquad\text{for all }a\in F.
  \]
\end{enumerate}
A \emph{hyperfield} is a hyperring $(F,\oplus,\cdot)$ whose
multiplicative group $F^\times$ is abelian.
\end{definition}

Thus hyperfields are to fields what canonical commutative hypergroups
are to abelian groups: the multiplicative structure is group–like,
while addition is allowed to be multivalued but must satisfy a strong
set of axioms.

\begin{example}[Ordinary fields as hyperfields]\label{ex:field-as-hyperfield}
Let $(K,+,\cdot)$ be a field in the usual sense.  Define
\[
  a\oplus b := \{a+b\}
  \qquad\text{for all }a,b\in K.
\]
Then $(K,\oplus,\cdot)$ is a hyperfield.
\end{example}

\begin{proof}
By Theorem~\ref{thm:group-hypergroup}, the additive group $(K,+)$
gives rise to a canonical commutative hypergroup $(K,\oplus)$ by
taking singletons as hyper–sums, so condition~(1) of
Definition~\ref{def:hyperfield} holds.  The multiplicative group
$K^\times$ is an abelian group under $\cdot$, with the usual identity
$1$, so (2) holds as well.  Distributivity (3) is immediate from the
field axioms: for any $a,b,c\in K$,
\[
  a\cdot(b\oplus c) = a\cdot\{b+c\} = \{a(b+c)\}
                   = \{ab+ac\}
                   = (a\cdot b)\oplus(a\cdot c),
\]
and similarly on the right.  Finally, $0$ is absorbing for
multiplication in $K$, so (4) holds.  Hence $(K,\oplus,\cdot)$ is a
hyperfield.
\end{proof}

The most important example for our purposes is the \emph{sign
hyperfield}, whose underlying set is $\{0,+,-\}$ and whose additive
structure encodes the possible signs of sums of real numbers.  We view
this here as a special case of Example~\ref{ex:sign-hyperfield}.

\begin{definition}[Sign hyperfield]\label{def:sign-hyperfield}
Let $S_{\mathrm{sign}}:=\{0,+,-\}$.  Define a hyperoperation
$\oplus_{\mathrm{sign}}$ on $S_{\mathrm{sign}}$ by
\begin{align*}
  0\oplus_{\mathrm{sign}} x
  &= x\oplus_{\mathrm{sign}} 0
   = \{x\}
   &&\text{for all }x\in S_{\mathrm{sign}},\\
  +\oplus_{\mathrm{sign}} +
  &= \{+\},\\
  -\oplus_{\mathrm{sign}} -
  &= \{-\},\\
  +\oplus_{\mathrm{sign}} -
  &= -\oplus_{\mathrm{sign}} +
   = \{+,0,-\}.
\end{align*}
Define multiplication on $S_{\mathrm{sign}}$ by
\[
  0\cdot x = x\cdot 0 := 0\quad\text{for all }x,\qquad
  +\cdot + := +,\quad
  +\cdot - := -,\quad
  -\cdot + := -,\quad
  -\cdot - := +.
\]
\end{definition}

\begin{proposition}\label{prop:sign-hyperfield}
The triple $(S_{\mathrm{sign}},
\oplus_{\mathrm{sign}},\cdot)$ is a hyperfield.  In particular, the
additive structure $(S_{\mathrm{sign}},\oplus_{\mathrm{sign}})$ is a
canonical commutative hypergroup.
\end{proposition}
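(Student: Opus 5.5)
The plan is to verify the hyperfield axioms of Definition~\ref{def:hyperfield} one at a time, doing the additive hypergroup axioms (HG1)--(HG5) by finite case analysis on $S_{\mathrm{sign}}=\{0,+,-\}$. The multiplicative part is immediate. $S_{\mathrm{sign}}^\times=\{+,-\}$ with the given table is isomorphic to the group $\{\pm1\}$ under multiplication, hence an abelian group with identity $+$. The element $0$ is absorbing by definition.

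For the additive structure, (HG1) holds because the table is symmetric, and (HG3) is built into the definition with neutral element $0$. For (HG4), $0\in x\oplus_{\mathrm{sign}} y$ happens only for $(x,y)\in\{(0,0),(+,-),(-,+)\}$, since $+\oplus+=\{+\}$, $-\oplus-=\{-\}$ and $0\oplus x=\{x\}$. So the inverses are unique, with $-0=0$ and $-(\pm)=\mp$.

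For (HG5), I will use a structural shortcut instead of all $27$ triples. Observe that $x\in y\oplus_{\mathrm{sign}} z$ holds if and only if there are reals $r_y,r_z$ with signs $y,z$ whose sum has sign $x$. Equivalently, $x\oplus_{\mathrm{sign}} y=\{\operatorname{sgn}_\R(r+s): \operatorname{sgn}_\R(r)=x,\ \operatorname{sgn}_\R(s)=y\}$. I will check this realizability description against the table: the mixed-sign case gives all three signs, and the other cases are forced. Reversibility then follows by the real identity $r_x=r_y+r_z \iff r_z=r_x+(-r_y)$, together with $\operatorname{sgn}_\R(-r)=-\operatorname{sgn}_\R(r)$.

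Associativity (HG2) is the main step, and I expect it to be the only place needing care. Using the realizability description, both $(x\oplus y)\oplus z$ and $x\oplus(y\oplus z)$ should equal the set of signs of $r+s+t$ with prescribed signs $x,y,z$. The subtle point is that an intermediate sign $u\in x\oplus y$ must be realized by a value of the form $r+s$ that can then be combined freely with $t$. Signs are invariant under positive scaling, so given $u$ realized by $r+s$ and any real $t'$ of sign $z$, I can rescale. If $u=0$, any $r+s=0$ works. If $u\neq0$, scale $(r,s)$ by a positive factor $\lambda$ so that $\lambda(r+s)$ equals any prescribed value of sign $u$. This shows that $\bigcup_{u\in x\oplus y}u\oplus z$ equals the set of signs of triple sums, and symmetrically for the other bracketing.

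If this scaling argument feels too slick, I will fall back on the direct check. Cases containing $0$ are trivial. When $x,y,z\in\{+,-\}$ are all equal, both sides are the singleton of that sign. Otherwise both $+$ and $-$ occur among $x,y,z$, and both bracketings equal $\{+,0,-\}$: one bracketing contains a mixed pair producing $\{+,0,-\}$, which is then added to a nonzero sign and still yields all three signs.

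Finally, distributivity reduces to $a\cdot(b\oplus c)=(ab)\oplus(ac)$. This is trivial for $a\in\{0,+\}$. For $a=-$, negation is an automorphism of the table, because it swaps $+\oplus+$ and $-\oplus-$ and fixes the mixed set $\{+,0,-\}$. Right distributivity follows by commutativity of multiplication. This completes the hyperfield axioms, and the ``in particular'' clause is axiom (1).
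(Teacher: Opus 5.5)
Your proof is correct, and for the multiplicative group, the absorbing $0$, uniqueness of inverses and distributivity it coincides with the paper's argument. For (HG2) and (HG5), however, you take a genuinely different route.

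The paper proceeds by enumeration. It writes out only the triple $(+,+,-)$ for associativity and omits the reversibility table as routine. You instead establish the exact realizability formula $x\oplus_{\mathrm{sign}}y=\{\operatorname{sgn}_\R(r+s):\operatorname{sgn}_\R(r)=x,\ \operatorname{sgn}_\R(s)=y\}$, which is the equality version of the inclusion in Lemma~\ref{lem:sign-map}. You then pull both axioms back from $\R$; in effect, you identify $S_{\mathrm{sign}}$ with Krasner's quotient hyperfield $\R/\R_{>0}$.

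Reversibility is then the identity $r_x=r_y+r_z\iff r_z=r_x+(-r_y)$. Associativity follows because the fibres of $\operatorname{sgn}_\R$ are $\R_{>0}$-orbits. Suppose a real $p$ of sign $u$ and some $t$ of sign $z$ witness $w\in u\oplus_{\mathrm{sign}}z$. A witness $(r,s)$ for $u\in x\oplus_{\mathrm{sign}}y$ can be rescaled so that its sum equals $p$. No rescaling is needed when $u=0$, since then $p=0=r+s$.

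That is the right reading of your sentence about ``any real $t'$ of sign $z$'', which as written conflates the two witnesses. In your fallback check, ``one bracketing'' should read ``each bracketing'': in each, either the inner pair is mixed, or it is $\sigma,\sigma$ and the outer summand is $-\sigma$.

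What each route buys:
\begin{itemize}
\item Your route gives a complete proof with no omitted tables and explains why the axioms hold.
\item The paper's enumeration needs no associative model upstairs. This matters later for $(\Sset,\sop)$: Theorem~\ref{thm:sign-realization} gives the analogous realizability equality over $(\Hnum,\op)$, but $\op$ is not associative, so your scaling argument does not directly transfer there.
\end{itemize}
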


\begin{proof}
We sketch the verification of the axioms.

\smallskip\noindent
\emph{(Additive structure).}  Commutativity is clear from the
definition: $x\oplus_{\mathrm{sign}}y=y\oplus_{\mathrm{sign}}x$ for
all $x,y\in S_{\mathrm{sign}}$.  The element $0$ satisfies
\[
  0\oplus_{\mathrm{sign}}x
  = x\oplus_{\mathrm{sign}}0
  = \{x\}
  \quad\text{for all }x,
\]
so it is a neutral element.  For each $x\in S_{\mathrm{sign}}$ there
is a unique additive inverse: $-0=0$, $-(+)= -$, and $-(-)=+$, since
$0\in(+)\oplus_{\mathrm{sign}}(-)$ and $0\in(-)\oplus_{\mathrm{sign}}(+)$,
and no other pair has $0$ in its hyper–sum.  It remains to check
associativity and reversibility.

To see associativity, one may enumerate all triples
$(x,y,z)\in S_{\mathrm{sign}}^3$ and compare
$(x\oplus_{\mathrm{sign}}y)\oplus_{\mathrm{sign}}z$ with
$x\oplus_{\mathrm{sign}}(y\oplus_{\mathrm{sign}}z)$.  Most cases are
trivial: whenever two or more entries are $0$, or all three entries
have the same sign, both bracketings reduce to a singleton equal to the
common value.  The nontrivial case is when the multiset of signs is
$\{+,+,-\}$ or $\{+,-,-\}$.  For example, if $(x,y,z)=(+,+,-)$, then
\[
  x\oplus_{\mathrm{sign}}y = +\oplus_{\mathrm{sign}}+
  = \{+\},
\]
so
\[
  (x\oplus_{\mathrm{sign}}y)\oplus_{\mathrm{sign}}z
  = +\oplus_{\mathrm{sign}}-
  = \{+,0,-\}.
\]
On the other hand,
\[
  y\oplus_{\mathrm{sign}}z
  = +\oplus_{\mathrm{sign}}-
  = \{+,0,-\},
\]
and adding $x=+$ to each element of this set again yields
$\{+,0,-\}$.  The case $(x,y,z)=(+,-,-)$ is symmetric.  Thus the two
bracketings always agree, so associativity holds.

Reversibility can also be verified by complete enumeration.  For
instance, take $y=+$; then $x\in y\oplus_{\mathrm{sign}}z$ means that
$x$ is one of the signs in the set $+\oplus_{\mathrm{sign}}z$, while
$z\in x\oplus_{\mathrm{sign}}(-y)$ means that $z$ belongs to
$x\oplus_{\mathrm{sign}}-$.  Checking the pairs
$(x,z)\in S_{\mathrm{sign}}^2$ case by case shows that these conditions
are equivalent.  The same holds for $y=-$ and $y=0$ (where the
reversibility statement becomes trivial).  We omit the routine table
but note that this verification is standard in the literature on the
sign hyperfield.  Thus $(S_{\mathrm{sign}},\oplus_{\mathrm{sign}})$ is a
canonical commutative hypergroup.

\smallskip\noindent
\emph{(Multiplicative structure).}  The nonzero elements
$S_{\mathrm{sign}}^\times = \{+,-\}$ form a group under the given
multiplication, isomorphic to the cyclic group of order~$2$:
$+$ is the identity, and $-^{-1}=-$.  The group is clearly abelian.
The element $0$ is absorbing, since $0\cdot x=x\cdot 0=0$ for all $x$.

\smallskip\noindent
\emph{(Distributivity).}  Finally, distributivity of multiplication
over hyperaddition can be checked directly.  It suffices to verify that
for all $a,b,c\in S_{\mathrm{sign}}$,
\[
  a\cdot(b\oplus_{\mathrm{sign}}c)
  \subseteq (a\cdot b)\oplus_{\mathrm{sign}}(a\cdot c),
\]
since right distributivity follows by commutativity of $\cdot$.  If
$a=0$, both sides reduce to $\{0\}$, so the inclusion holds.  For
$a=+$, the left hand side is $b\oplus_{\mathrm{sign}}c$ and the right
hand side is $(+\cdot b)\oplus_{\mathrm{sign}}(+\cdot c)
=b\oplus_{\mathrm{sign}}c$, so we have equality.  For $a=-$, the left
hand side is
\[
  -\cdot(b\oplus_{\mathrm{sign}}c)
  = \{-x : x\in b\oplus_{\mathrm{sign}}c\},
\]
and the right hand side is
\[
  (-\cdot b)\oplus_{\mathrm{sign}}(-\cdot c)
  = (-b)\oplus_{\mathrm{sign}}(-c).
\]
From the explicit description of $\oplus_{\mathrm{sign}}$ and the fact
that negation is a bijection on $\{+,-\}$, it follows that
$\{-x : x\in b\oplus_{\mathrm{sign}}c\}
= (-b)\oplus_{\mathrm{sign}}(-c)$, so distributivity holds.  Hence all
conditions in Definition~\ref{def:hyperfield} are satisfied, and
$(S_{\mathrm{sign}},\oplus_{\mathrm{sign}},\cdot)$ is a hyperfield.
\end{proof}

The sign hyperfield provides a compact way to encode the behaviour of
real signs under addition and multiplication.  The following simple
fact explains this connection.

\begin{lemma}[Sign of a real sum]\label{lem:sign-map}
Let $\operatorname{sgn}_\R:\R\to\{0,+,-\}$ be the sign map from
Definition~\ref{def:basic-real-sign}.  For all $x,y\in\R$ one has
\[
  \operatorname{sgn}_\R(x+y)
  \in \operatorname{sgn}_\R(x)\oplus_{\mathrm{sign}}\operatorname{sgn}_\R(y),
\]
where $\oplus_{\mathrm{sign}}$ is the hyperaddition on the sign
hyperfield.
\end{lemma}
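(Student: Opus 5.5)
The plan is a direct case analysis on the signs of $x$ and $y$, using only the definition of $\operatorname{sgn}_\R$ (Definition~\ref{def:basic-real-sign}) and the table for $\oplus_{\mathrm{sign}}$ in Definition~\ref{def:sign-hyperfield}. I would split according to the pair $(\operatorname{sgn}_\R(x),\operatorname{sgn}_\R(y))\in\{0,+,-\}^2$. Since both $x+y=y+x$ and $\oplus_{\mathrm{sign}}$ are commutative, it suffices to treat unordered pairs, which leaves five cases.

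\emph{Case with a zero.} If $x=0$, then $x+y=y$, so $\operatorname{sgn}_\R(x+y)=\operatorname{sgn}_\R(y)$. The right-hand side is $0\oplus_{\mathrm{sign}}\operatorname{sgn}_\R(y)=\{\operatorname{sgn}_\R(y)\}$, so the membership holds. The case $y=0$ follows by symmetry.

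\emph{Same signs.} If $x,y>0$, then $x+y>0$ because $\Rp$ is closed under addition. Hence $\operatorname{sgn}_\R(x+y)=+\in\{+\}=+\oplus_{\mathrm{sign}}+$. If $x,y<0$, then $x+y<0$, and the same argument gives $-\in\{-\}=-\oplus_{\mathrm{sign}}-$.

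\emph{Opposite signs.} If one of $x,y$ is positive and the other negative, the right-hand side is $+\oplus_{\mathrm{sign}}-=\{+,0,-\}$. This set is the whole of $S_{\mathrm{sign}}$, and $\operatorname{sgn}_\R$ takes values in $S_{\mathrm{sign}}$, so the membership holds trivially.

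The only step requiring any real-number input is the same-sign case, where we need closure of $\Rp$ and of the negative reals under addition. This is an order axiom of $\R$, so I expect no genuine obstacle. The main care point is making sure the case split is exhaustive, which it is by trichotomy.
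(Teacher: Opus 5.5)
Your proof is correct and uses the same case analysis on $(\operatorname{sgn}_\R(x),\operatorname{sgn}_\R(y))$ as the paper; the opposite-sign case is immediate because $+\oplus_{\mathrm{sign}}-$ is all of $S_{\mathrm{sign}}$. Your explicit treatment of the case where exactly one summand is $0$, via $0\oplus_{\mathrm{sign}}\sigma=\{\sigma\}$, covers a case the paper's proof omits, since the paper discusses only same-sign pairs (including $x=y=0$) and $\{+,-\}$ pairs.
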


\begin{proof}
We distinguish cases according to the signs of $x$ and $y$.

If $x$ and $y$ have the same sign, then their sum $x+y$ has that sign
as well (or is zero only in the trivial case $x=y=0$).  This matches
the rules
\[
  0\oplus_{\mathrm{sign}}0=\{0\},\quad
  +\oplus_{\mathrm{sign}}+=\{+\},\quad
  -\oplus_{\mathrm{sign}}-=\{-\}.
\]

If $\operatorname{sgn}_\R(x)=+$ and $\operatorname{sgn}_\R(y)=-$, then
$x>0$ and $y<0$.  Depending on the relative magnitudes of $x$ and $y$,
the sum $x+y$ can be positive, zero, or negative.  Thus
$\operatorname{sgn}_\R(x+y)\in\{+,0,-\}
=+\oplus_{\mathrm{sign}}-$.  The case
$\operatorname{sgn}_\R(x)=-$, $\operatorname{sgn}_\R(y)=+$ is
symmetric and corresponds to
$-\oplus_{\mathrm{sign}}+=\{+,0,-\}$.  This exhausts all possibilities,
so the inclusion holds in every case.
\end{proof}

In the language of hyperfields, Lemma~\ref{lem:sign-map} says that the
sign map $\operatorname{sgn}_\R$ is a morphism from the field $\R$
(viewed as a hyperfield as in
Example~\ref{ex:field-as-hyperfield}) to the sign hyperfield
$S_{\mathrm{sign}}$.  In later sections we will construct a
four–element sign set $\Sset=\{0,+,-,\Lambda\}$ equipped with a
canonical hypergroup structure $(\Sset,\sop)$ that extends the additive
hypergroup $(S_{\mathrm{sign}},\oplus_{\mathrm{sign}})$ by adding a
third sign $\Lambda$.  The three–sign hypernumber system $\Hnum$ will
then be seen as a refinement of this sign–level picture which keeps
track not only of signs but also of magnitudes and cancellation mass.


\section{The three-sign cancellation hypernumber system}
\label{sec:3CHS}

\subsection{Universe, sign and magnitude, and real embedding}
\label{subsec:universe-embedding}

We now fix the underlying set of our three--sign hypernumber system and
formalize the sign and magnitude maps, together with the embedding of
the classical real line.

Recall from Definition~\ref{def:basic-signs} that
\[
  \Sset = \{0,+,-,\Lambda\},\qquad
  \Snon = \{+,-,\Lambda\},
\]
and from Definition~\ref{def:basic-R} that
$\Rp = \Rp = \{a\in\R : a>0\}$ denotes the set of strictly positive
real numbers.

\begin{definition}[Three--sign hypernumber universe]\label{def:H-universe}
The \emph{three--sign hypernumber universe} is the set
\[
  \Hnum := \{0\} \;\cup\; \Snon\times\Rp.
\]
An element of $\Hnum$ is either the distinguished zero $0$, or a pair
$(\sigma,a)$ with $\sigma\in\Snon$ and $a\in\Rp$.  We define the
\emph{sign map}
\[
  \sgn : \Hnum \longrightarrow \Sset
\]
and the \emph{magnitude map}
\[
  |\cdot| : \Hnum \longrightarrow \R_{\ge0}
\]
by
\[
  \sgn(0) := 0,\quad
  \sgn(\sigma,a) := \sigma \ \text{ for }\sigma\in\Snon,\ a>0,
\]
and
\[
  |0| := 0,\quad
  |(\sigma,a)| := a \ \text{ for }\sigma\in\Snon,\ a>0.
\]
\end{definition}

Thus every nonzero element of $\Hnum$ carries exactly one sign from
$\Snon$ and a strictly positive magnitude, while $0$ has no sign and
zero magnitude.

\begin{remark}[Partition by sign]\label{rem:sign-partition}
The nonzero part of $\Hnum$ decomposes as a disjoint union
\[
  \Hnum\setminus\{0\}
  = H_+ \;\dot\cup\; H_- \;\dot\cup\; H_\Lambda,
\]
where
\[
  H_+ := \{(+,a): a>0\},\quad
  H_- := \{(-,a): a>0\},\quad
  H_\Lambda := \{(\Lambda,a): a>0\}.
\]
Each of these sets is the fibre of the sign map over the corresponding
nonzero sign:
\[
  H_\sigma = \sgn^{-1}(\sigma)
  \qquad\text{for }\sigma\in\Snon.
\]
\end{remark}

The classical real line will be embedded in $\Hnum$ by attaching the
usual positive and negative signs to the absolute values of real
numbers.  This will serve as the ``classical core'' of our construction.

\begin{definition}[Real embedding and classical line]\label{def:real-embedding}
Define a map
\[
  \iemb : \R \longrightarrow \Hnum
\]
by
\[
  \iemb(0) := 0,\qquad
  \iemb(r) := ( +, r ) \ \text{ for } r>0,\qquad
  \iemb(r) := ( -, -r ) \ \text{ for } r<0.
\]
We call $\iemb$ the \emph{real embedding}, and we denote its image by
\[
  \Rc := \iemb(\R) \subset \Hnum.
\]
We refer to $\Rc$ as the \emph{classical real line} inside $\Hnum$.
\end{definition}

In other words, $\Rc$ is the subset of $\Hnum$ consisting of $0$, all
``positive'' elements $(+,a)$ with $a>0$, and all ``negative'' elements
$(-,a)$ with $a>0$; the $\Lambda$--sector is entirely new.

\begin{lemma}[Basic properties of the embedding]\label{lem:embedding-basic}
The map $\iemb:\R\to\Hnum$ is injective, and its image is
\[
  \Rc = \{0\}\;\cup\;\{(+,a): a>0\}\;\cup\;\{(-,a): a>0\}.
\]
Moreover, for all $r\in\R$,
\[
  \sgn(\iemb(r)) = \operatorname{sgn}_\R(r),\qquad
  |\iemb(r)| = |r|,
\]
where $\operatorname{sgn}_\R$ is the real sign map from
Definition~\ref{def:basic-real-sign}.
\end{lemma}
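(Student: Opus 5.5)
The proof is a direct verification from Definition~\ref{def:real-embedding}, organized by the trichotomy $r>0$, $r=0$, $r<0$.

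\textbf{Injectivity.} Suppose $\iemb(r)=\iemb(s)$. Since $\iemb(r)=0$ exactly when $r=0$, we may assume $r,s\neq0$. The two values are then pairs $(\sigma,a)$, and equal pairs have equal signs. Equal signs force $r$ and $s$ to lie on the same side of $0$. Within the positive branch the magnitudes give $r=s$; within the negative branch they give $-r=-s$, hence $r=s$. There is no cross-case collision, because $(+,a)\neq(-,b)$ in $\Snon\times\Rp$.

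\textbf{The image.} The inclusion $\Rc\subseteq\{0\}\cup H_+\cup H_-$ is read off directly from the three defining clauses. For the reverse inclusion, given $a>0$ we have $(+,a)=\iemb(a)$ and $(-,a)=\iemb(-a)$, and $0=\iemb(0)$. In particular, no element of $H_\Lambda$ is hit, which matches Remark~\ref{rem:sign-partition}.

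\textbf{Sign and magnitude identities.} We check the same three cases against Definitions~\ref{def:basic-real-sign} and~\ref{def:H-universe}.
\begin{itemize}
\item For $r=0$: $\sgn(0)=0=\operatorname{sgn}_\R(0)$ and $|0|=0$.
\item For $r>0$: $\sgn(+,r)=+$ and $|(+,r)|=r=|r|$.
\item For $r<0$: $\sgn(-,-r)=-$ and $|(-,-r)|=-r=|r|$.
\end{itemize}

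There is no genuine obstacle here. The only point needing care is that $-r\in\Rp$ when $r<0$, which guarantees that $\iemb$ actually lands in $\Hnum$.
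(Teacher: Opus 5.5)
Your proposal is correct and follows essentially the same route as the paper: a direct verification from the definition of $\iemb$, split by the trichotomy $r>0$, $r=0$, $r<0$, covering injectivity, the image, and the sign and magnitude identities. The only cosmetic differences are that you prove injectivity by first matching the sign labels of the two pairs, whereas the paper cases on the sign of $r_1$ and rules out $r_2$ lying on the other side, and that you write out the reverse inclusion for the image, which the paper calls immediate.
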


\begin{proof}
The description of $\Rc$ follows immediately from the definition of
$\iemb$.  If $\iemb(r_1)=\iemb(r_2)$, there are three cases:

\smallskip
\noindent
(1) If $r_1=0$, then $\iemb(r_1)=0$, so $\iemb(r_2)=0$ implies
$r_2=0$.  Thus $r_1=r_2=0$.

\smallskip
\noindent
(2) If $r_1>0$, then $\iemb(r_1)=(+,r_1)$.  If $r_2\le0$, then
$\iemb(r_2)$ is either $0$ or a pair whose first coordinate is $-$,
so it cannot equal $(+,r_1)$.  Hence $r_2>0$, and from
\[
  ( +, r_1 ) = \iemb(r_1) = \iemb(r_2) = ( +, r_2 )
\]
we deduce $r_1=r_2$.

\smallskip
\noindent
(3) If $r_1<0$, then $\iemb(r_1)=(-,-r_1)$ with $-r_1>0$.  If $r_2\ge0$,
then $\iemb(r_2)$ is either $0$ or has first coordinate $+$, so it
cannot equal $(-,-r_1)$.  Thus $r_2<0$, and from
\[
  ( -, -r_1 ) = \iemb(r_1) = \iemb(r_2) = ( -, -r_2 )
\]
we deduce $-r_1=-r_2$, hence $r_1=r_2$.  In all cases
$\iemb(r_1)=\iemb(r_2)$ implies $r_1=r_2$, so $\iemb$ is injective.

For the compatibility with signs and magnitudes, note that
\[
  \sgn(\iemb(0))=\sgn(0)=0=\operatorname{sgn}_\R(0),
  \quad
  |\iemb(0)|=|0|=0=|0|.
\]
If $r>0$, then $\iemb(r)=(+,r)$, so
\[
  \sgn(\iemb(r)) = + = \operatorname{sgn}_\R(r),
  \quad
  |\iemb(r)| = r = |r|.
\]
If $r<0$, then $\iemb(r)=(-,-r)$ with $-r>0$, so
\[
  \sgn(\iemb(r)) = - = \operatorname{sgn}_\R(r),
  \quad
  |\iemb(r)| = -r = |r|.
\]
Thus in all cases $\sgn(\iemb(r))$ and $|\iemb(r)|$ agree with the
usual sign and absolute value of $r$, as claimed.
\end{proof}

\begin{remark}[Real numbers inside the three--sign universe]\label{rem:Rc-inline}
By Lemma~\ref{lem:embedding-basic} we may freely identify each real
number $r\in\R$ with its image $\iemb(r)\in\Rc\subset\Hnum$.  This
identification is compatible with both sign and magnitude:
$r>0$ corresponds to an element in the $+$--sector, $r<0$ to an
element in the $-$--sector, and $r=0$ to the distinguished zero.  In
later sections we will see that $\Rc$ is not only a subset of $\Hnum$
but in fact a subfield for the operations we define, so that the usual
real arithmetic is recovered exactly inside the three--sign universe.
\end{remark}

\subsection{Multiplicative structure}
\label{subsec:multiplicative}

We next specify the multiplicative structure on the three--sign universe
$\Hnum$ introduced in Definition~\ref{def:H-universe}.  The guiding
principle is that multiplication should behave exactly as in the real
field on the classical line $\Rc$, while extending to the $\Lambda$--sector
by means of a simple sign monoid.

Recall that $\Snon=\{+,-,\Lambda\}$ and $\Rp=\Rp$.

\begin{definition}[Multiplicative sign monoid]\label{def:sign-monoid}
Define a binary operation
\[
  * : \Snon\times\Snon \longrightarrow \Snon
\]
by the table
\[
  \begin{array}{c|ccc}
  * & + & - & \Lambda\\\hline
  + & + & - & \Lambda\\
  - & - & + & \Lambda\\
  \Lambda & \Lambda & \Lambda & \Lambda
  \end{array}
\]
i.e.\ $+$ is the identity, $- * -=+$, and every product involving
$\Lambda$ is $\Lambda$.
\end{definition}

\begin{lemma}\label{lem:sign-monoid}
The pair $(\Snon,*)$ is a commutative monoid with identity $+$, and
$\Lambda$ is an idempotent element:
\[
  \Lambda*\Lambda=\Lambda.
\]
\end{lemma}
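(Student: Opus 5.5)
The plan is to verify the monoid axioms directly from the table in Definition~\ref{def:sign-monoid}. The cleanest route is to exhibit $(\Snon,*)$ as a familiar monoid, which avoids checking all $27$ triples for associativity. Let $\{\pm1\}$ denote the multiplicative group of order two, and adjoin an absorbing element $\Lambda$ to it. The map $+\mapsto 1$, $-\mapsto -1$, $\Lambda\mapsto\Lambda$ then identifies $(\Snon,*)$ with the monoid $\{\pm1\}\cup\{\Lambda\}$, in which products of elements of $\{\pm1\}$ are computed as usual and any product involving $\Lambda$ equals $\Lambda$.

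The key steps, in order, are as follows.

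\emph{Commutativity.} Inspecting the table, one sees it is symmetric about the diagonal: $+*-=-*+=-$, and $\sigma*\Lambda=\Lambda*\sigma=\Lambda$ for every $\sigma\in\Snon$.

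\emph{Identity.} The row and column indexed by $+$ reproduce the headers, so $+*\sigma=\sigma*+=\sigma$ for all $\sigma\in\Snon$.

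\emph{Associativity.} For $\sigma,\tau,\rho\in\Snon$ there are two cases.
\begin{enumerate}
  \item If none of $\sigma,\tau,\rho$ is $\Lambda$, all three lie in $\{+,-\}$. The restriction of $*$ to $\{+,-\}$ is the multiplication table of $\{\pm1\}$, in particular $-*-=+$. Associativity in this case is therefore inherited from the group $\{\pm1\}$; note that $\{+,-\}$ is closed under $*$, so every intermediate product stays in this set.
  \item If at least one of them is $\Lambda$, then both bracketings equal $\Lambda$, because $\Lambda$ is absorbing. For example, if $\tau=\Lambda$, then $\sigma*\tau=\Lambda$ and $\Lambda*\rho=\Lambda$ on the left, while $\tau*\rho=\Lambda$ and $\sigma*\Lambda=\Lambda$ on the right. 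The cases $\sigma=\Lambda$ and $\rho=\Lambda$ go the same way.
\end{enumerate}

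\emph{Idempotence.} The equality $\Lambda*\Lambda=\Lambda$ is read off directly from the table.

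There is no real obstacle here. The only point requiring any care is associativity, and the split ``contains $\Lambda$ / does not contain $\Lambda$'' disposes of it without enumerating triples. The underlying general fact is that adjoining an absorbing zero-like element to any commutative monoid again yields a commutative monoid.
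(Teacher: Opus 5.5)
Your proof is correct and follows the same route as the paper. Commutativity comes from the symmetry of the table, the identity from the $+$ row and column, and associativity from the same split: triples avoiding $\Lambda$ inherit it from the sign group $\{+,-\}$, while for triples containing $\Lambda$ both bracketings equal $\Lambda$ by absorption. Viewing $(\Snon,*)$ as $\{\pm1\}$ with an adjoined absorbing element repackages that same case analysis.
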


\begin{proof}
Commutativity is clear from the symmetry of the table:
$\sigma*\tau=\tau*\sigma$ for all $\sigma,\tau\in\Snon$.

Associativity can be checked by a short case analysis.  If none of the
three factors is $\Lambda$, then we are in the subgroup
$\{+,-\}\subset\Snon$ with the usual sign multiplication, which is
associative.  If at least one factor is $\Lambda$, then every product
containing it is $\Lambda$.  Thus for any $\sigma,\tau,\rho\in\Snon$,
if one of them is $\Lambda$ then both $(\sigma*\tau)*\rho$ and
$\sigma*(\tau*\rho)$ equal $\Lambda$.  Hence associativity holds in all
cases.

The element $+$ is the identity: from the first row and column of the
table we see that $+*\sigma=\sigma*+=\sigma$ for all
$\sigma\in\Snon$.  Finally, the last row and column show that
$\Lambda*\Lambda=\Lambda$, so $\Lambda$ is idempotent.  This proves the
lemma.
\end{proof}

We now extend this sign multiplication to a multiplication on $\Hnum$
by multiplying magnitudes as in $\R$ and using $*$ on the sign
component.

\begin{definition}[Multiplication on $\Hnum$]\label{def:H-multiplication}
Define a multiplication
\[
  \cdot : \Hnum\times\Hnum \longrightarrow \Hnum
\]
by the following rules:
\begin{enumerate}
  \item $0\cdot x = x\cdot 0 := 0$ for all $x\in\Hnum$.
  \item For $\sigma,\tau\in\Snon$ and $a,b\in\Rp$,
  \[
    (\sigma,a)\cdot(\tau,b)
    := (\sigma*\tau,\; ab),
  \]
  where $*$ is the sign monoid operation from
  Definition~\ref{def:sign-monoid}.
\end{enumerate}
We write
\[
  \idH := (+,1)
\]
for the multiplicative identity candidate.
\end{definition}

\begin{lemma}[Behaviour of sign and magnitude]\label{lem:sgn-magnitude-mult}
For all $x,y\in\Hnum$ one has
\[
  \sgn(x\cdot y) = \sgn(x)*\sgn(y),\qquad
  |x\cdot y| = |x|\cdot|y|,
\]
where $*$ is extended to $\Sset$ by declaring $0$ absorbing:
$0*\sigma=\sigma*0:=0$.
\end{lemma}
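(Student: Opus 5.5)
The plan is a direct case analysis on whether either factor is $0$, working from Definition~\ref{def:H-multiplication} and the extension of $*$ to $\Sset$ with $0$ absorbing.

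\emph{Case with a zero factor.} Suppose $x=0$ or $y=0$. Rule~(1) of Definition~\ref{def:H-multiplication} gives $x\cdot y=0$. Hence $\sgn(x\cdot y)=\sgn(0)=0$ and $|x\cdot y|=0$. On the right-hand side, one of $\sgn(x),\sgn(y)$ equals $0$, so $\sgn(x)*\sgn(y)=0$ by the absorbing convention. Likewise one of $|x|,|y|$ equals $0$ by Definition~\ref{def:H-universe}, so $|x|\cdot|y|=0$ in $\R$. Both identities therefore hold.

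\emph{Case with both factors nonzero.} Write $x=(\sigma,a)$ and $y=(\tau,b)$ with $\sigma,\tau\in\Snon$ and $a,b\in\Rp$. Rule~(2) gives $x\cdot y=(\sigma*\tau,ab)$. This is a legitimate element of $\Snon\times\Rp$: $\sigma*\tau\in\Snon$ by the table in Definition~\ref{def:sign-monoid}, and $ab>0$. Reading off coordinates with the sign and magnitude maps gives
\[
\sgn(x\cdot y)=\sigma*\tau=\sgn(x)*\sgn(y),\qquad |x\cdot y|=ab=|x|\cdot|y|.
\]

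There is no real obstacle. The only point needing care is well-definedness of the extended operation $*$ on $\Sset$: on $\Snon$ it must agree with the original table, and the absorbing rule for $0$ must not conflict with it, which holds because $0\notin\Snon$. One should also note that in the second case the product is never $0$, since $ab>0$, so the zero case and the nonzero case do not overlap.
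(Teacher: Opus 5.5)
Your proposal is correct and follows essentially the same route as the paper's proof: the same two-case split. In the first case a zero factor is handled by rule~(1) and the absorbing convention for $0$; in the second, both factors are nonzero and you read off the sign and magnitude of $(\sigma*\tau,ab)$ directly. Your extra remarks on well-definedness of the extended $*$ and on $ab>0$ are harmless additions the paper leaves implicit.
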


\begin{proof}
If $x=0$ or $y=0$, then $x\cdot y=0$ by definition, so
$\sgn(x\cdot y)=0$ and $|x\cdot y|=0$.  On the other hand,
$|x|\cdot|y|=0$ and $\sgn(x)*\sgn(y)=0$, so the identities hold.

If $x=(\sigma,a)$ and $y=(\tau,b)$ with $\sigma,\tau\in\Snon$ and
$a,b>0$, then by Definition~\ref{def:H-multiplication},
\[
  x\cdot y = (\sigma*\tau,ab),
\]
so
\[
  \sgn(x\cdot y) = \sigma*\tau = \sgn(x)*\sgn(y),
  \qquad
  |x\cdot y| = ab = |x|\cdot|y|.
\]
This covers all cases.
\end{proof}

\begin{proposition}[Multiplicative monoid structure]\label{prop:H-monoid}
The set $\Hnum$ with the multiplication defined above and identity
$\idH=(+,1)$ is a commutative monoid.  Moreover, $0$ is an absorbing
element:
\[
  0\cdot x = x\cdot 0 = 0
  \qquad\text{for all }x\in\Hnum.
\]
\end{proposition}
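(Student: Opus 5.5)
The plan is to reduce everything to properties already established for the sign monoid $(\Snon,*)$ in Lemma~\ref{lem:sign-monoid} and to the usual arithmetic of $\Rp$. Before checking any axiom, I note that the multiplication of Definition~\ref{def:H-multiplication} is well defined as a map $\Hnum\times\Hnum\to\Hnum$. If $a,b>0$ then $ab>0$, so $(\sigma*\tau,ab)$ lies in $\Snon\times\Rp\subset\Hnum$, and the zero cases are fixed by rule~(1). The remaining axioms are handled in the order below.

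\emph{Absorbing zero and commutativity.} Absorption of $0$ is rule~(1) of Definition~\ref{def:H-multiplication} verbatim. For commutativity, if either factor is $0$ both products are $0$. Otherwise I will compute $(\sigma,a)\cdot(\tau,b)=(\sigma*\tau,ab)=(\tau*\sigma,ba)=(\tau,b)\cdot(\sigma,a)$, using commutativity of $*$ (Lemma~\ref{lem:sign-monoid}) and of multiplication in $\R$.

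\emph{Identity.} I have $\idH\cdot 0=0=0\cdot\idH$ by rule~(1). For a nonzero element, $(+,1)\cdot(\sigma,a)=(+*\sigma,1\cdot a)=(\sigma,a)$, because $+$ is the identity of $(\Snon,*)$. The other side then follows from commutativity.

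\emph{Associativity.} This is the only step with any case structure, and the case split is what needs care. If any of $x,y,z$ is $0$, then both $(x\cdot y)\cdot z$ and $x\cdot(y\cdot z)$ equal $0$. Here I apply rule~(1) twice and note that the inner product is $0$ whenever one of its factors is $0$. If all three are nonzero, say $(\sigma,a),(\tau,b),(\rho,c)$, then both bracketings are defined through nonzero intermediate products, since $ab,bc>0$. They evaluate to $((\sigma*\tau)*\rho,(ab)c)$ and $(\sigma*(\tau*\rho),a(bc))$ respectively. These agree by associativity of $*$ (Lemma~\ref{lem:sign-monoid}) and of real multiplication.

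Alternatively, Lemma~\ref{lem:sgn-magnitude-mult} gives the cleaner viewpoint that $x\mapsto(\sgn x,|x|)$ is an injective multiplicative map into the product monoid $(\Sset,*)\times(\R_{\ge0},\cdot)$. The map is injective because a nonzero element is determined by its sign and magnitude, and $0$ is the unique element with sign $0$. So $\Hnum$ is isomorphic to a submonoid of this product monoid, which gives all the axioms at once. I would mention this but present the direct check above as the proof.
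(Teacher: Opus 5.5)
Your proposal is correct and follows essentially the same route as the paper: absorption is read off rule~(1), commutativity and the identity come from the corresponding properties of $(\Snon,*)$ and of $\R$, and associativity is split into the case where some factor is $0$ and the all-nonzero case, which reduces to Lemma~\ref{lem:sign-monoid} and real associativity. Your remark that the map $x\mapsto(\sgn x,|x|)$ embeds $\Hnum$ injectively and multiplicatively into $(\Sset,*)\times(\R_{\ge0},\cdot)$ is also a valid one-line alternative, but the paper does not use it.
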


\begin{proof}
Commutativity follows immediately from the symmetry of the definition:
$0\cdot x=x\cdot 0$ is clear, and for nonzero $x=(\sigma,a)$,
$y=(\tau,b)$ we have
\[
  x\cdot y = (\sigma*\tau,ab)
           = (\tau*\sigma,ba)
           = y\cdot x,
\]
using commutativity of $*$ and of multiplication in $\R$.

Associativity is also straightforward.  If one of the three factors is
$0$, then both $(x\cdot y)\cdot z$ and $x\cdot(y\cdot z)$ are $0$ by
the absorbing property.  For nonzero elements
$x=(\sigma_1,a_1)$, $y=(\sigma_2,a_2)$, $z=(\sigma_3,a_3)$ we have
\begin{align*}
  (x\cdot y)\cdot z
  &= (\sigma_1*\sigma_2,a_1a_2)\cdot(\sigma_3,a_3)\\
  &= ((\sigma_1*\sigma_2)*\sigma_3,\,(a_1a_2)a_3),
\end{align*}
and
\begin{align*}
  x\cdot(y\cdot z)
  &= (\sigma_1,a_1)\cdot(\sigma_2*\sigma_3,a_2a_3)\\
  &= (\sigma_1*(\sigma_2*\sigma_3),\,a_1(a_2a_3)).
\end{align*}
By Lemma~\ref{lem:sign-monoid}, $(\Snon,*)$ is an associative monoid,
and multiplication in $\R$ is associative, so the two results coincide.
Thus multiplication on $\Hnum$ is associative.

The identity property of $\idH=(+,1)$ is immediate: for any nonzero
$(\sigma,a)$,
\[
  \idH\cdot(\sigma,a)
  = (+,1)\cdot(\sigma,a)
  = (+*\sigma,1\cdot a)
  = (\sigma,a),
\]
and similarly $(\sigma,a)\cdot\idH=(\sigma,a)$; for $x=0$ the identity
property holds trivially.  The absorbing property of $0$ was built
into Definition~\ref{def:H-multiplication}.  Hence $(\Hnum,\cdot,\idH)$
is a commutative monoid with absorbing element $0$.
\end{proof}

We next identify the units, zero divisors, and idempotents of this
monoid.

\begin{proposition}[Units and zero divisors]\label{prop:H-units}
An element $x\in\Hnum$ is a multiplicative unit if and only if
$x\in H_+\cup H_-$, i.e.\ $x$ has sign $+$ or $-$; in particular, no
element of the $\Lambda$--sector $H_\Lambda$ is invertible.  More
precisely, for $a>0$,
\[
  (+,a)^{-1} = (+,a^{-1}),\qquad
  (-,a)^{-1} = (-,a^{-1}).
\]
Consequently, the group of units $U(\Hnum)$ is isomorphic to
$\R^\times$ via the embedding $\iemb$ from
Definition~\ref{def:real-embedding}.

Moreover, the only multiplicative zero divisor in $\Hnum$ is $0$:
if $x,y\in\Hnum$ and $x\cdot y=0$, then $x=0$ or $y=0$.
\end{proposition}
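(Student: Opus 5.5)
The plan is to argue directly from Definition~\ref{def:H-multiplication} and Lemma~\ref{lem:sgn-magnitude-mult}, splitting by the sign of $x$.

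\emph{Units.} For $x=(\pm,a)$ with $a>0$ we compute $(\pm,a)\cdot(\pm,a^{-1})=(\pm*\pm,\,aa^{-1})=(+,1)=\idH$, using $+*+=+$ and $-*-=+$ from the table in Definition~\ref{def:sign-monoid}. Hence every element of $H_+\cup H_-$ is a unit with the stated inverse. Conversely, suppose $x\cdot y=\idH$. Then $\sgn(x)*\sgn(y)=+$ by Lemma~\ref{lem:sgn-magnitude-mult}. This rules out $x=0$, since $0$ is absorbing and $\sgn(0)=0\neq+$. It also rules out $\sgn(x)=\Lambda$, since every product involving $\Lambda$ equals $\Lambda\neq+$. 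Thus $x\in H_+\cup H_-$, and in particular no element of $H_\Lambda$ is invertible. Uniqueness of inverses is the usual monoid argument from Proposition~\ref{prop:H-monoid}.

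\emph{Isomorphism with $\R^\times$.} I would restrict $\iemb$ to $\R^\times$; by Lemma~\ref{lem:embedding-basic} this is a bijection onto $H_+\cup H_-=U(\Hnum)$. It remains to check multiplicativity, $\iemb(rs)=\iemb(r)\cdot\iemb(s)$. Take $r,s\neq0$. The magnitudes agree since $|rs|=|r||s|$ (Lemma~\ref{lem:basic-abs}). The signs agree since $\operatorname{sgn}_\R(rs)=\operatorname{sgn}_\R(r)*\operatorname{sgn}_\R(s)$, where $*$ restricted to $\{+,-\}$ is the usual sign rule. A nonzero element of $\Hnum$ is determined by its sign and magnitude, so the two sides coincide.

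\emph{Zero divisors.} If $x,y\neq0$, write $x=(\sigma,a)$ and $y=(\tau,b)$. Then $x\cdot y=(\sigma*\tau,ab)$ with $ab>0$, which is a pair and not the distinguished $0$. Hence $x\cdot y=0$ forces $x=0$ or $y=0$.

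No step is a genuine obstacle. The only point needing care is the converse direction for units, which rests on the facts that $\Lambda$ is absorbing in $(\Snon,*)$ and that $0$ lies outside $\Snon$.
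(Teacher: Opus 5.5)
Your proof is correct and follows the paper's argument: the converse direction for units via $\sgn(x)*\sgn(y)=+$ and the sign table, the explicit inverses $(\pm,a^{-1})$, and the zero-divisor argument via $(\sigma*\tau,ab)\neq 0$ are exactly what the paper does. The only difference is that you check multiplicativity of $\iemb$ on $\R^\times$ directly from sign and magnitude, whereas the paper cites Proposition~\ref{prop:H-add-Rc} and the later Lemma~\ref{lem:scalar-Rc}; your direct check is the same computation as that lemma, written out in place.
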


\begin{proof}
Let $x\in\Hnum$ be a unit.  If $x=0$, then $x\cdot y=0$ for all $y$,
so $x$ cannot have a multiplicative inverse; thus every unit is
nonzero.  Write $x=(\sigma,a)$ with $\sigma\in\Snon$ and $a>0$, and
let $y=(\tau,b)$ be an inverse, so that $x\cdot y=\idH=(+,1)$.  Then
by Lemma~\ref{lem:sgn-magnitude-mult},
\[
  \sgn(x\cdot y)
  = \sigma*\tau = +,\qquad
  |x\cdot y| = ab = 1.
\]
Thus $ab=1$, so $b=a^{-1}$, and $\sigma*\tau=+$.  From the sign monoid
table we see that the only solutions of $\sigma*\tau=+$ are
$(\sigma,\tau)=(+,+)$ or $(\sigma,\tau)=(-,-)$.  In particular,
$\sigma\in\{+,-\}$, so $x$ lies in $H_+\cup H_-$.  Conversely, if
$x=(+,a)$ with $a>0$, then
\[
  x\cdot(+,a^{-1}) = (+*+,aa^{-1}) = (+,1) = \idH,
\]
and similarly $(-,a)\cdot(-,a^{-1})=\idH$.  Thus every element of
$H_+\cup H_-$ is a unit.  This proves the characterization of units and
the inverse formulas.

For the identification of $U(\Hnum)$ with $\R^\times$, note that
$\Rc\setminus\{0\} = H_+\cup H_-$ and $\iemb:\R\to\Rc$ is compatible with the field structure on $\R$ (see
Proposition~\ref{prop:H-add-Rc} and Lemma~\ref{lem:scalar-Rc});
in particular, $\iemb$ restricts
to a group isomorphism
\[
  \R^\times \xrightarrow{\;\cong\;} U(\Hnum).
\]

For the zero divisor statement, suppose $x,y\in\Hnum$ and
$x\cdot y=0$.  If either $x$ or $y$ is $0$, we are done.  Otherwise
write $x=(\sigma,a)$, $y=(\tau,b)$ with $\sigma,\tau\in\Snon$ and
$a,b>0$.  Then
\[
  x\cdot y = (\sigma*\tau,ab)
\]
is nonzero, since $ab>0$ and $\sigma*\tau\in\Snon$.  This contradicts
$x\cdot y=0$.  Hence at least one of $x,y$ must be zero.  Thus there
are no nonzero zero divisors in $\Hnum$.
\end{proof}

\begin{proposition}[Multiplicative idempotents]\label{prop:H-idempotents}
The multiplicative idempotents of $\Hnum$ are precisely the three
elements
\[
  0,\qquad (+,1),\qquad (\Lambda,1).
\]
\end{proposition}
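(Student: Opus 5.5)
We solve the idempotent equation $x\cdot x=x$ directly, splitting into the zero and nonzero cases and using Lemma~\ref{lem:sgn-magnitude-mult} to separate sign and magnitude.

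First, $x=0$ is idempotent because $0\cdot 0=0$ by the absorbing property (Proposition~\ref{prop:H-monoid}).

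Now let $x=(\sigma,a)$ with $\sigma\in\Snon$ and $a>0$. By Definition~\ref{def:H-multiplication}, $x\cdot x=(\sigma*\sigma,a^2)$, which is nonzero. Hence $x\cdot x=x$ holds if and only if both of the following hold:
\[
  \sigma*\sigma=\sigma \qquad\text{and}\qquad a^2=a .
\]
Since $a>0$, the magnitude condition $a^2=a$ forces $a=1$.

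For the sign condition, we read the diagonal of the table in Definition~\ref{def:sign-monoid}:
\[
  +*+=+,\qquad -*-=+\neq -,\qquad \Lambda*\Lambda=\Lambda .
\]
The last equality is also recorded in Lemma~\ref{lem:sign-monoid}. So the idempotent signs are exactly $+$ and $\Lambda$.

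Combining the two conditions, the nonzero idempotents are exactly $(+,1)=\idH$ and $(\Lambda,1)$. Conversely, both are idempotent by direct computation: $(+,1)\cdot(+,1)=(+,1)$ and $(\Lambda,1)\cdot(\Lambda,1)=(\Lambda,1)$. Together with $0$, this gives precisely the three elements listed in the statement.

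No step poses a real obstacle. The only points needing care are that a nonzero square can never equal $0$, and that $(-,1)$ fails because its square is $(+,1)$.
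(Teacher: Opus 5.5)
Your proof is correct and follows the paper's route. You reduce $(\sigma,a)\cdot(\sigma,a)=(\sigma,a)$ to the two conditions $\sigma*\sigma=\sigma$ and $a^2=a$, then read off $a=1$ and $\sigma\in\{+,\Lambda\}$ from the sign-monoid table; your explicit check of the converse and your remark that a nonzero square is never $0$ are small but welcome additions.
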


\begin{proof}
An element $e\in\Hnum$ is idempotent if $e\cdot e=e$.  Clearly
$0\cdot 0=0$, so $0$ is idempotent.  Let $e\ne0$, so $e=(\sigma,a)$
with $\sigma\in\Snon$, $a>0$.  Then
\[
  e\cdot e
  = (\sigma,a)\cdot(\sigma,a)
  = (\sigma*\sigma, a^2).
\]
The condition $e\cdot e=e$ is equivalent to
\[
  \sigma*\sigma=\sigma,\qquad a^2=a.
\]
Since $a>0$, the equation $a^2=a$ implies $a=1$.  Thus we must have
$a=1$, and $\sigma$ must be an idempotent in the sign monoid, i.e.\
$\sigma*\sigma=\sigma$.  From the table in
Definition~\ref{def:sign-monoid} we see that
\[
  +*+ = +,\quad - * - = +,\quad \Lambda*\Lambda=\Lambda.
\]
Hence the only idempotent signs are $+$ and $\Lambda$.  Therefore the
only nonzero idempotent elements of $\Hnum$ are $(+,1)$ and
$(\Lambda,1)$.  Together with $0$, this gives the full list of
multiplicative idempotents.
\end{proof}

\begin{remark}[Summary of multiplicative structure]\label{rem:H-mult-summary}
We summarize the multiplicative picture as follows:
\begin{itemize}[leftmargin=2em]
  \item $(\Hnum,\cdot,\idH)$ is a commutative monoid with absorbing
        element $0$.
  \item The group of units $U(\Hnum)=H_+\cup H_-$ is isomorphic to
        $\R^\times$ via the embedding $\iemb$.
  \item There are no nonzero zero divisors.
  \item Besides $0$ and the identity $(+,1)$, the only idempotent is
        $(\Lambda,1)$, which acts as a multiplicative projector onto
        the $\Lambda$--sector:
        \[
          (\Lambda,1)\cdot(\sigma,a) = (\Lambda,a)
          \quad\text{for all }\sigma\in\Snon,\ a>0.
        \]
\end{itemize}
The additive structure introduced in the next subsection will be
compatible with this multiplicative structure in such a way that the
classical line $\Rc$ becomes a subfield of $\Hnum$, while the
$\Lambda$--sector introduces new, nonclassical behaviour.
\end{remark}

\subsection{Hyperaddition and additive structure}
\label{subsec:hyperaddition}

We now define the hyperaddition on $\Hnum$ and record its basic
properties.  The construction is designed so that
\begin{itemize}[leftmargin=2em]
  \item on the classical line $\Rc$ it agrees exactly with ordinary
        addition of real numbers, and
  \item the $\Lambda$--sector acts as a reservoir for cancellation
        mass, with genuinely multivalued behaviour only in the
        $\Lambda$--$\Lambda$ sums.
\end{itemize}

Throughout this subsection we write elements of $\Hnum$ as $0$ or
$(\sigma,a)$ with $\sigma\in\Snon=\{+,-,\Lambda\}$ and $a\in\Rp$.

\begin{definition}[Hyperaddition on $\Hnum$]\label{def:H-hyperadd}
Define a map
\[
  \op : \Hnum\times\Hnum
  \longrightarrow \mathcal{P}(\Hnum)\setminus\{\emptyset\}
\]
by the following rules.

\smallskip\noindent
\textbf{(1) Identity.} For all $x\in\Hnum$,
\[
  0\op x = x\op 0 := \{x\}.
\]

\smallskip\noindent
\textbf{(2) Same sign, excluding $\Lambda$.} For $\sigma\in\{+,-\}$ and
$a,b\in\Rp$,
\[
  (\sigma,a)\op(\sigma,b)
  := \{(\sigma,a+b)\}.
\]

\smallskip\noindent
\textbf{(3) Opposite real signs.} For $a,b\in\Rp$,
\[
  (+,a)\op(-,b)
  :=
  \begin{cases}
    \{(+,a-b)\}, & a>b,\\[2pt]
    \{0\},       & a=b,\\[2pt]
    \{(-,b-a)\}, & a<b.
  \end{cases}
\]
and $(-,b)\op(+,a)$ is defined by commutativity.

\smallskip\noindent
\textbf{(4) Interaction with $\Lambda$.} For $a,b\in\Rp$,
\begin{align*}
  (+,a)\op(\Lambda,b)
  &= (\Lambda,b)\op(+,a)
   := \{(\Lambda,a+b)\},\\[2pt]
  (-,a)\op(\Lambda,b)
  &= (\Lambda,b)\op(-,a)
   := \{(\Lambda,a+b)\}.
\end{align*}

\smallskip\noindent
\textbf{(5) $\Lambda$--$\Lambda$ sums.} For $a,b\in\Rp$,
\[
  (\Lambda,a)\op(\Lambda,b)
  :=
  \begin{cases}
    \{(\Lambda,2a),\,0\}, & a=b,\\[2pt]
    \{(\Lambda,a+b),\, (+,|a-b|),\, (-,|a-b|)\}, & a\ne b.
  \end{cases}
\]
\smallskip\noindent
In all cases not explicitly covered above, $\op$ is defined by
commutativity:
\[
  x\op y := y\op x.
\]
\end{definition}

It is immediate from the definition that every pair $(x,y)$ is covered
by exactly one of the clauses (1)--(5), up to the symmetry $x\leftrightarrow y$.

\begin{lemma}[Well-definedness and commutativity]\label{lem:H-hyperadd-basic}
The hyperoperation $\op$ in Definition~\ref{def:H-hyperadd} is
well-defined and commutative: for all $x,y\in\Hnum$,
\[
  x\op y = y\op x.
\]
Moreover, for every $x,y\in\Hnum$ the set $x\op y$ is nonempty and
finite.
\end{lemma}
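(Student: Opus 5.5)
The plan is a case analysis on the unordered pair of signs. Write each nonzero element as $(\sigma,a)$ with $\sigma\in\Snon$ and $a\in\Rp$. There are then two tasks.

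\emph{Coverage.} First, I check that for every ordered pair $(x,y)\in\Hnum^2$ exactly one clause of Definition~\ref{def:H-hyperadd} applies, either directly or after swapping $x$ and $y$. If $x=0$ or $y=0$, clause (1) applies. Note that when both are $0$ its two readings both give $\{0\}$, so there is no conflict. Otherwise the unordered pair of signs $\{\sgn x,\sgn y\}$ is one of six multisets: $\{+,+\}$, $\{-,-\}$, $\{+,-\}$, $\{+,\Lambda\}$, $\{-,\Lambda\}$, $\{\Lambda,\Lambda\}$. These are handled respectively by clause (2), clause (2), clause (3), clause (4), clause (4), and clause (5). The six multisets are pairwise distinct, so no pair receives two competing definitions except through the commutativity convention.

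\emph{Consistency of the symmetric clauses.} Second, I verify that the symmetric clauses do not contradict themselves when $x$ and $y$ are interchanged. Clauses (2) and (5) are the only ones defined directly on both orders. They are symmetric in $(a,b)$ because $a+b$, the condition $a=b$, and $|a-b|$ are all symmetric. In case (5) with $a=b$, the element $(\Lambda,2a)$ equals $(\Lambda,a+b)$, so that case is symmetric as well. Clause (4) explicitly states both orders with the same value. Clause (3) defines $(+,a)\op(-,b)$ and sets $(-,b)\op(+,a)$ equal to it by definition. Hence $\op$ is a well-defined function on $\Hnum\times\Hnum$, and $x\op y=y\op x$ holds in every case.

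\emph{Nonemptiness and finiteness.} Finally, I read these off the explicit outputs. Each clause returns an explicitly listed set with one, two, or three elements, and each listed element is a legitimate element of $\Hnum$. This requires magnitudes to be strictly positive wherever a signed pair is produced:
\begin{itemize}
\item $a+b>0$;
\item $a-b>0$ when $a>b$, and $b-a>0$ when $a<b$ in clause (3);
\item $|a-b|>0$ exactly in the branch $a\ne b$ of clause (5), which is why the case split there is needed.
\end{itemize}

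The only mildly delicate point is the last one: confirming that no clause ever produces a pair $(\sigma,0)$, which is not in $\Hnum$. The case splits in (3) and (5) are exactly what rule this out, so I expect no real obstacle beyond careful bookkeeping.
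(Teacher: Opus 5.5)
Your proof is correct and takes essentially the same route as the paper. It splits first on whether an entry is $0$, then on the unordered sign pair, matches each case to clauses (1)--(5), and reads commutativity off from the symmetry of each clause and the commutativity convention in clause (3). Your extra check that every produced pair has strictly positive magnitude, so that the outputs genuinely lie in $\Hnum$, is something the paper leaves implicit, and it is a worthwhile addition.
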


\begin{proof}
Given $x,y\in\Hnum$, we distinguish cases:

\smallskip
\noindent
(1) If $x=0$ or $y=0$, then (1) applies and $x\op y$ is a singleton.

\smallskip
\noindent
(2) If $x,y\ne0$, write $x=(\sigma,a)$, $y=(\tau,b)$ with
$\sigma,\tau\in\Snon$, $a,b>0$.  There are four mutually exclusive
possibilities (up to order):\\[2pt]
\quad(a) $\sigma=\tau\in\{+,-\}$: clause (2) applies;\\
\quad(b) $\{\sigma,\tau\}=\{+,-\}$: clause (3) applies;\\
\quad(c) $\{\sigma,\tau\}=\{+,\Lambda\}$ or $\{\sigma,\tau\}
          =\{-,\Lambda\}$: clause (4) applies;\\
\quad(d) $\sigma=\tau=\Lambda$: clause (5) applies.

Thus each ordered pair $(x,y)$ falls into exactly one of the listed
cases (or the $0$--case), and the definition of $x\op y$ is unambiguous
and covers all possibilities.

From the construction it is clear that in each case the resulting set
is nonempty and has at most three elements.  Commutativity follows
because the right-hand sides are symmetric under exchanging the two
inputs: this is obvious in (1), (2), (4) and (5), and in (3) we have
explicitly required that $(-,b)\op(+,a)$ be defined by commutativity.
Hence $x\op y=y\op x$ for all $x,y\in\Hnum$.
\end{proof}

The neutral element and additive inverses are also easy to describe.

\begin{lemma}[Neutral element]\label{lem:H-add-neutral}
The element $0\in\Hnum$ is a neutral element for $\op$, i.e.
\[
  0\op x = x\op 0 = \{x\}
  \qquad\text{for all }x\in\Hnum.
\]
\end{lemma}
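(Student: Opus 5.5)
The claim follows directly from clause~(1) of Definition~\ref{def:H-hyperadd}, so the plan is a definitional unwinding rather than a case analysis. That clause stipulates, for every $x\in\Hnum$, that $0\op x=x\op 0=\{x\}$. I will simply observe that this is exactly the defining condition of a neutral element in Definition~\ref{def:neutral-inverse}(1), applied to the hypermagma $(\Hnum,\op)$.

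The one point that needs checking is that clause~(1) is not overridden or contradicted by another clause. A clash could only arise if a pair with a zero entry were also covered by clauses (2)--(5), or by the closing ``defined by commutativity'' rule. I will settle this by appeal to Lemma~\ref{lem:H-hyperadd-basic}. That lemma shows each ordered pair falls into exactly one case: the $0$-case whenever $x=0$ or $y=0$, and otherwise exactly one of (a)--(d). Clauses (2)--(5) only concern pairs $(\sigma,a),(\tau,b)$ with $a,b>0$, so they never involve the element $0$. The commutativity convention is also harmless here: clause~(1) already prescribes both $0\op x$ and $x\op 0$, and it does so symmetrically.

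The special case $x=0$ also requires a word. Clause~(1) then gives $0\op 0=\{0\}$, which is consistent with the required identity. Nonemptiness of the resulting value is immediate, since it is a singleton.

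I expect no real obstacle. The only step requiring care is the consistency check above, which Lemma~\ref{lem:H-hyperadd-basic} already supplies.
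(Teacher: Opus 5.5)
Your proposal is correct and takes the same approach as the paper, whose proof simply observes that the identity is exactly clause~(1) of Definition~\ref{def:H-hyperadd}. Your extra consistency check via Lemma~\ref{lem:H-hyperadd-basic}, which notes that clauses (2)--(5) never involve the element $0$, is harmless added care that the paper leaves implicit.
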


\begin{proof}
This is exactly clause (1) of Definition~\ref{def:H-hyperadd}.
\end{proof}

\begin{definition}[Additive inverses in $\Hnum$]\label{def:H-add-inverse}
An element $y\in\Hnum$ is called an \emph{additive inverse} of
$x\in\Hnum$ if
\[
  0\in x\op y.
\]
When such a $y$ exists and is unique, we denote it by $-x$.
\end{definition}

\begin{lemma}[Existence and uniqueness of additive inverses]\label{lem:H-add-inverses}
For every $x\in\Hnum$ there exists a unique $y\in\Hnum$ such that
$0\in x\op y$.  Explicitly,
\[
  -0 = 0,\quad
  -(+,a) = (-,a),\quad
  -(-,a) = (+,a),\quad
  -(\Lambda,a) = (\Lambda,a),
\]
for all $a>0$.
\end{lemma}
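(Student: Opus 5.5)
The plan is a direct case analysis on $x$, using only the clauses of Definition~\ref{def:H-hyperadd}. For each $x\in\Hnum$ we determine exactly which $y\in\Hnum$ satisfy $0\in x\op y$. We split according to $\sgn(x)\in\{0,+,-,\Lambda\}$ and, within each case, according to $y=0$ or $\sgn(y)\in\{+,-,\Lambda\}$. By Lemma~\ref{lem:H-hyperadd-basic}, every pair falls under exactly one clause up to symmetry, and $\op$ is commutative. So it suffices to read off, clause by clause, whether the output set contains $0$.

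\emph{Case $x=0$.} By clause (1), $0\op y=\{y\}$, which contains $0$ if and only if $y=0$. Hence $-0=0$.

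\emph{Case $x=(+,a)$.} If $y=0$, clause (1) gives $\{(+,a)\}$, which does not contain $0$. If $y=(+,b)$, clause (2) gives $\{(+,a+b)\}$, which does not contain $0$. If $y=(\Lambda,b)$, clause (4) gives $\{(\Lambda,a+b)\}$, which does not contain $0$. If $y=(-,b)$, clause (3) gives the singleton $\{0\}$ exactly when $a=b$; otherwise it gives a signed nonzero element. Hence $y=(-,a)$ is the unique inverse. The case $x=(-,a)$ is symmetric: using commutativity and clause (3) with the roles of $a,b$ swapped, the unique inverse is $(+,a)$.

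\emph{Case $x=(\Lambda,a)$.} If $y=0$, we get $\{(\Lambda,a)\}$, which does not contain $0$. If $y=(\pm,b)$, clause (4) gives $\{(\Lambda,a+b)\}$, which does not contain $0$. If $y=(\Lambda,b)$, clause (5) yields a set containing $0$ only in the subcase $a=b$, where the output is $\{(\Lambda,2a),0\}$. For $a\ne b$ all three outputs $(\Lambda,a+b)$ and $(\pm,|a-b|)$ are nonzero, since $a+b>0$ and $|a-b|>0$. Hence $(\Lambda,a)$ is the unique inverse of itself.

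There is no real obstacle. The only point requiring care is to check that in every nonmatching case no output element equals $0$, which holds because all magnitudes produced are strictly positive. This is why the strict inequality $a\ne b$ matters in clauses (3) and (5). Combining the cases gives existence and uniqueness, together with the explicit formulas.
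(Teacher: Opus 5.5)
Your proof is correct and follows essentially the same route as the paper: a clause-by-clause case analysis on the sign of $x$ and of $y$, checking exactly when $0\in x\op y$. You also explicitly treat the subcase $y=0$ for $x=(\Lambda,a)$, which the paper's proof passes over silently.
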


\begin{proof}
If $x=0$, then $0\op y=\{y\}$, so $0\in0\op y$ if and only if $y=0$.
Thus $-0=0$.

Let $x=(+,a)$ with $a>0$.  We seek $y$ such that $0\in(+,a)\op y$.
If $y=0$, then $(+,a)\op0=\{(+,a)\}$, so $0$ is not in the sum.  If
$y$ has sign $+$ or $\Lambda$, then by clauses (2) and (4),
$(+,a)\op y$ contains only $+$ or $\Lambda$ elements, never $0$.  If
$y=(-,b)$ with $b>0$, then by clause (3),
\[
  (+,a)\op(-,b)
  =
  \begin{cases}
    \{(+,a-b)\}, & a>b,\\[2pt]
    \{0\},       & a=b,\\[2pt]
    \{(-,b-a)\}, & a<b.
  \end{cases}
\]
The only way to obtain $0$ is to take $b=a$, in which case
$(+,a)\op(-,a)=\{0\}$.  Hence $(-,a)$ is the unique additive inverse
of $(+,a)$.

The case $x=(-,a)$ is symmetric: if $y$ has sign $-$ or $\Lambda$,
then $(-,a)\op y$ cannot contain $0$; if $y=(+,b)$, then by clause (3)
we get $0$ only when $b=a$.  Thus $-(-,a)=(+,a)$.

Finally, let $x=(\Lambda,a)$ with $a>0$.  If $y$ has sign $+$ or $-$,
then by clause (4) $(\Lambda,a)\op y$ consists only of $\Lambda$–elements.
Hence $0\notin(\Lambda,a)\op y$ in these cases.  If $y=(\Lambda,b)$,
then by clause (5),
\[
  (\Lambda,a)\op(\Lambda,b) =
  \begin{cases}
    \{(\Lambda,2a),\,0\}, & a=b,\\[2pt]
    \{(\Lambda,a+b),\, (+,|a-b|),\, (-,|a-b|)\}, & a\ne b.
  \end{cases}
\]
Thus $0$ occurs in the sum if and only if $b=a$, and in that case
$(\Lambda,a)\op(\Lambda,a)$ contains $0$.  Hence $(\Lambda,a)$ is its
own unique additive inverse.

In all cases, each $x\in\Hnum$ admits a unique additive inverse, given
by the formulas in the statement.
\end{proof}

\begin{proposition}[Additive hypermagma structure]\label{prop:H-add-hypermagma}
The pair $(\Hnum,\op)$ is a commutative hypermagma with neutral element
$0$ and unique additive inverses in the sense of
Definition~\ref{def:neutral-inverse}.  In particular,
\[
  0\op x = x\op 0 = \{x\}
  \quad\text{and}\quad
  0\in x\op(-x)
\]
for all $x\in\Hnum$.
\end{proposition}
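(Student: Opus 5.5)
This proposition is essentially a repackaging of the three lemmas just proved, so the plan is to assemble them rather than argue anything new. I would first invoke Lemma~\ref{lem:H-hyperadd-basic}. It gives that $\op$ is a well-defined map $\Hnum\times\Hnum\to\mathcal{P}(\Hnum)\setminus\{\emptyset\}$, since every value set is nonempty and finite. That is exactly a hyperoperation in the sense of Definition~\ref{def:hyperoperation}, so $(\Hnum,\op)$ is a hypermagma. The same lemma supplies commutativity, so Definition~\ref{def:comm-hypermagma} is satisfied.

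Next I would quote Lemma~\ref{lem:H-add-neutral}, which is clause (1) of Definition~\ref{def:H-hyperadd}. It shows $0$ is a neutral element in the sense of Definition~\ref{def:neutral-inverse}(1), giving $0\op x=x\op 0=\{x\}$.

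For inverses I would invoke Lemma~\ref{lem:H-add-inverses}. It produces, for each $x$, a unique $y$ with $0\in x\op y$. This matches Definition~\ref{def:neutral-inverse}(2), which coincides with Definition~\ref{def:H-add-inverse}, so the notation $-x$ is legitimate. The displayed membership $0\in x\op(-x)$ then holds by construction. As a sanity check I would read off the four cases from the explicit formulas:
\begin{itemize}
\item $0\op 0=\{0\}$;
\item $(+,a)\op(-,a)=\{0\}$ by clause (3);
\item $(-,a)\op(+,a)=\{0\}$ by commutativity;
\item $(\Lambda,a)\op(\Lambda,a)=\{(\Lambda,2a),0\}$ by clause (5).
\end{itemize}

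There is no real obstacle here. The only point needing care is checking that the hypermagma notion of ``unique inverse'' in Definition~\ref{def:neutral-inverse} is literally the same condition as in Lemma~\ref{lem:H-add-inverses}. That lemma's case analysis exhausted all possible $y$, including $y=0$ and the mixed-sign sectors, so uniqueness transfers directly.
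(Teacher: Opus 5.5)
Your proposal is correct and is essentially the paper's own proof, which likewise combines Lemma~\ref{lem:H-hyperadd-basic} (commutative hypermagma with finite nonempty sums), Lemma~\ref{lem:H-add-neutral} (the neutral element from clause (1)) and Lemma~\ref{lem:H-add-inverses} (unique inverses), and observes that these are exactly the conditions of Definition~\ref{def:neutral-inverse}. Your explicit four-case check of $0\in x\op(-x)$ is a harmless extra. The one case the cited inverse lemma leaves implicit, $(\Lambda,a)\op 0=\{(\Lambda,a)\}$, is trivial by clause (1).
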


\begin{proof}
By Lemma~\ref{lem:H-hyperadd-basic}, $(\Hnum,\op)$ is a commutative
hypermagma and every hyper–sum is finite and nonempty.  Lemma
\ref{lem:H-add-neutral} shows that $0$ is a neutral element, and Lemma
\ref{lem:H-add-inverses} shows that each $x\in\Hnum$ has a unique
additive inverse $-x$ with $0\in x\op(-x)$.  This is precisely the
content of Definition~\ref{def:neutral-inverse}, so the proposition
follows.
\end{proof}

A key requirement in our construction is that the restriction of $\op$
to the classical line $\Rc$ reproduces ordinary addition of real
numbers.  This is immediate from the case–by–case definition.

\begin{proposition}[Restriction to the classical real line]\label{prop:H-add-Rc}
Let $\iemb:\R\to\Hnum$ and $\Rc=\iemb(\R)$ be as in
Definition~\ref{def:real-embedding}.  Then for all $x,y\in\R$,
\[
  \iemb(x)\op\iemb(y)
  = \{\iemb(x+y)\}.
\]
In particular, $\Rc$ is closed under $\op$, and the induced operation
on $\Rc$ is the usual addition on $\R$ transported via $\iemb$.
\end{proposition}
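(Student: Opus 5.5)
The plan is a direct case analysis on the signs of the real numbers $x$ and $y$. In each case we match $\iemb(x)\op\iemb(y)$ against the clauses of Definition~\ref{def:H-hyperadd}. Every clause that involves only the signs $+$, $-$ and $0$ is single-valued, so each case produces a singleton; we then check that this singleton is $\iemb(x+y)$. The key point is that clauses (4) and (5) never occur, because $\iemb$ takes values in $\Rc=\{0\}\cup H_+\cup H_-$ by Lemma~\ref{lem:embedding-basic}.

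The cases are as follows.

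\emph{Zero case.} If $x=0$ then $\iemb(x)=0$, and clause (1) gives $\{\iemb(y)\}=\{\iemb(0+y)\}$. The case $y=0$ is symmetric.

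\emph{Same positive sign.} If $x,y>0$, clause (2) gives $\{(+,x+y)\}$. Since $x+y>0$, this equals $\{\iemb(x+y)\}$.

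\emph{Same negative sign.} If $x,y<0$, clause (2) gives $\{(-,(-x)+(-y))\}=\{(-,-(x+y))\}$. Since $x+y<0$, this is $\{\iemb(x+y)\}$.

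\emph{Opposite signs.} Suppose $x>0>y$ and set $a=x$, $b=-y$. Clause (3) splits according to whether $a>b$, $a=b$ or $a<b$, that is, whether $x+y$ is positive, zero or negative. The three outputs are $(+,x+y)$, $0$ and $(-,-(x+y))$ respectively. These are exactly $\iemb(x+y)$ by Definition~\ref{def:real-embedding}. The case $x<0<y$ follows from commutativity, Lemma~\ref{lem:H-hyperadd-basic}.

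Finally, each sum is a singleton lying in $\iemb(\R)=\Rc$, so $\Rc$ is closed under $\op$. Because $\iemb$ is injective (Lemma~\ref{lem:embedding-basic}), the induced single-valued operation on $\Rc$, namely $\iemb(x)+\iemb(y):=\iemb(x+y)$, is ordinary real addition transported via $\iemb$.

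No step is a genuine obstacle. The only point needing care is the bookkeeping in the opposite-sign case: the magnitude $a-b$ or $b-a$ must be matched against the convention $\iemb(r)=(-,-r)$ for $r<0$, and the tie $a=b$ must land on the distinguished zero rather than on a signed element.
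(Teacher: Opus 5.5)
Your proposal is correct and follows essentially the same route as the paper. Both split into the zero case (clause (1)), the two same-sign cases (clause (2)), and the opposite-sign case via the three subcases of clause (3), reducing $x<0<y$ by commutativity and matching magnitudes against the convention $\iemb(r)=(-,-r)$ for $r<0$.
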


\begin{proof}
If $x=0$ or $y=0$, the claim follows from clause (1) and the fact that
$\iemb(0)=0$.  Suppose $x,y\ne0$ and write $x=\pm a$, $y=\pm b$ with
$a,b>0$.  Then
\[
  \iemb(x) =
  \begin{cases}
    (+,a), & x>0,\\
    (-,a), & x<0,
  \end{cases}
  \qquad
  \iemb(y) =
  \begin{cases}
    (+,b), & y>0,\\
    (-,b), & y<0.
  \end{cases}
\]

If $x,y>0$, then
\[
  \iemb(x)\op\iemb(y)
  = (+,a)\op(+,b)
  = \{(+,a+b)\}
  = \{\iemb(a+b)\}
  = \{\iemb(x+y)\}.
\]

If $x,y<0$, then $x=-a$, $y=-b$ and
\[
  \iemb(x)\op\iemb(y)
  = (-,a)\op(-,b)
  = \{(-,a+b)\}
  = \{\iemb(-(a+b))\}
  = \{\iemb(x+y)\}.
\]

If $x>0$ and $y<0$, then $x=a$, $y=-b$ and
\[
  \iemb(x)\op\iemb(y)
  = (+,a)\op(-,b)
  =
  \begin{cases}
    \{(+,a-b)\}, & a>b,\\[2pt]
    \{0\},       & a=b,\\[2pt]
    \{(-,b-a)\}, & a<b,
  \end{cases}
\]
while $x+y=a-b$.  If $a>b$, then $x+y>0$ and
$\iemb(x+y)=(+,a-b)$; if $a=b$, then $x+y=0$ and $\iemb(x+y)=0$; if
$a<b$, then $x+y<0$ and $\iemb(x+y)=(-,b-a)$.  Thus in all cases
$\iemb(x)\op\iemb(y)$ is the singleton $\{\iemb(x+y)\}$.

The case $x<0$, $y>0$ is symmetric and yields the same conclusion.
Hence the formula $\iemb(x)\op\iemb(y)=\{\iemb(x+y)\}$ holds for all
$x,y\in\R$, and $\Rc$ is closed under $\op$ with the induced operation
isomorphic to $(\R,+)$.
\end{proof}

\begin{remark}[Additive structure summary]\label{rem:H-add-summary}
We have constructed a commutative hypermagma $(\Hnum,\op)$ with
neutral element $0$ and unique additive inverses, whose restriction to
the classical line $\Rc$ reproduces ordinary addition on $\R$.  The
additive structure is \emph{not} associative in general and therefore
does not form a canonical commutative hypergroup; a detailed analysis
of associativity and its controlled failure will be carried out in
Section~\ref{sec:assoc}.  For the purposes of the present section, the
key points are the existence of a well-defined hyperaddition, the
group-like behaviour of $\Rc$, and the presence of the $\Lambda$--sector,
which introduces multivalued and nonclassical behaviour while leaving
the classical real line untouched.
\end{remark}

\subsection{Scalar multiplication}
\label{subsec:scalar}

In this subsection we describe how real scalars act on the
three--sign hypernumber system $\Hnum$.  The guiding principle is that
the action of $t\in\R$ on $x\in\Hnum$ should be realized via the
embedded real element $\iemb(t)\in\Rc\subset\Hnum$ and the ordinary
multiplication on $\Hnum$.

\begin{definition}[Scalar multiplication by real numbers]
\label{def:scalar-mult}
Let $\iemb:\R\to\Hnum$ be the real embedding from
Definition~\ref{def:real-embedding}.  For $t\in\R$ and $x\in\Hnum$ we
define
\[
  t\cdot x := \iemb(t)\cdot x,
\]
where $\cdot$ is the multiplication on $\Hnum$ from
Definition~\ref{def:H-multiplication}.
\end{definition}

Unwinding the definitions, we obtain the following explicit formulas.

\begin{lemma}[Explicit form of scalar multiplication]
\label{lem:scalar-explicit}
Let $t\in\R$ and $x\in\Hnum$.
\begin{enumerate}
  \item If $t=0$, then $t\cdot x = 0$ for all $x\in\Hnum$.
  \item If $t>0$ and $x=0$, then $t\cdot x=0$.  If
        $t>0$ and $x=(\sigma,a)$ with $\sigma\in\Snon$, $a>0$, then
  \[
    t\cdot(\sigma,a) = (\sigma,ta).
  \]
  \item If $t<0$ and $x=0$, then $t\cdot x=0$.  If
        $t<0$ and $x=(\sigma,a)$ with $\sigma\in\Snon$, $a>0$, then
  \[
    t\cdot(\sigma,a) =
    (\sigma',\,|t|a),
  \]
  where
  \[
    \sigma' =
    \begin{cases}
      -,       & \sigma=+,\\
      +,       & \sigma=-,\\
      \Lambda, & \sigma=\Lambda.
    \end{cases}
  \]
\end{enumerate}
\end{lemma}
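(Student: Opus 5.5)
The plan is to unwind Definition~\ref{def:scalar-mult}, writing $t\cdot x=\iemb(t)\cdot x$, and split into cases by the sign of $t$. We then evaluate $\iemb(t)$ via Definition~\ref{def:real-embedding} and apply the multiplication rules of Definition~\ref{def:H-multiplication} together with the sign monoid table of Definition~\ref{def:sign-monoid}. No earlier structural result beyond these definitions is needed. Where convenient, Lemma~\ref{lem:sgn-magnitude-mult} can serve as a cross-check on signs and magnitudes.

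\emph{Case $t=0$.} Here $\iemb(0)=0$, and clause (1) of Definition~\ref{def:H-multiplication} (equivalently, the absorbing property in Proposition~\ref{prop:H-monoid}) gives $0\cdot x=0$ for every $x\in\Hnum$.

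\emph{Case $t>0$.} Here $\iemb(t)=(+,t)$. If $x=0$, absorption again gives $0$. If $x=(\sigma,a)$, then $(+,t)\cdot(\sigma,a)=(+*\sigma,ta)=(\sigma,ta)$, because $+$ is the identity of $(\Snon,*)$ by Lemma~\ref{lem:sign-monoid}.

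\emph{Case $t<0$.} Here $\iemb(t)=(-,-t)=(-,|t|)$. The subcase $x=0$ is handled by absorption as before. For $x=(\sigma,a)$ we get $(-,|t|)\cdot(\sigma,a)=(-*\sigma,|t|a)$. Reading the second row of the table in Definition~\ref{def:sign-monoid} gives $-*+=-$, $-*-=+$ and $-*\Lambda=\Lambda$. These are exactly the three values of $\sigma'$ in the statement.

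There is no real obstacle in this argument; it is pure bookkeeping. The only point needing care is the negative case: the magnitude stored in $\iemb(t)$ is $-t=|t|$ rather than $t$, so the product has magnitude $|t|a>0$ and is a legitimate element of $\Snon\times\Rp$. One should also note explicitly that $\Lambda$ is fixed under sign reversal because $\Lambda$ absorbs in the sign monoid.
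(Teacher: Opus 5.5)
Your proposal is correct and matches the paper's proof essentially step for step. Like the paper, you unwind $t\cdot x=\iemb(t)\cdot x$, split on the sign of $t$, use absorption of $0$, and read off $+*\sigma=\sigma$ and $-*\sigma=\sigma'$ from the sign-monoid table, with the magnitude $-t=|t|$ stored in $\iemb(t)$ for $t<0$.
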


\begin{proof}
If $t=0$, then $\iemb(t)=0$ and $0\cdot x=0$ for all $x$ by
Definition~\ref{def:H-multiplication}.  This gives (1).

If $t>0$, then $\iemb(t)=(+,t)$.  For $x=0$ we again get
$t\cdot x = (+,t)\cdot0=0$.  For $x=(\sigma,a)$ with $\sigma\in\Snon$
and $a>0$,
\[
  t\cdot(\sigma,a)
  = ( +,t )\cdot(\sigma,a)
  = (+*\sigma,\, ta)
  = (\sigma,ta),
\]
since $+$ is the identity in the sign monoid $(\Snon,*)$.  This is (2).

If $t<0$, then $\iemb(t)=(-,-t)$ with $-t>0$.  For $x=0$ we get
$t\cdot x = (-,-t)\cdot0=0$.  For $x=(\sigma,a)$ with $\sigma\in\Snon$
and $a>0$,
\[
  t\cdot(\sigma,a)
  = (-,-t)\cdot(\sigma,a)
  = (-*\sigma,\,(-t)a)
  = (\sigma',\,|t|a),
\]
where $\sigma'=-*\sigma$.  From the multiplication table in
Definition~\ref{def:sign-monoid} we have
\[
  -*+ = -, \quad - * - = +, \quad - * \Lambda = \Lambda,
\]
which yields the stated formula for $\sigma'$.  This proves (3).
\end{proof}

In particular, scalar multiplication behaves exactly as expected on
the classical line.

\begin{lemma}[Compatibility with the real embedding]
\label{lem:scalar-Rc}
For all $t,x\in\R$ one has
\[
  t\cdot\iemb(x) = \iemb(tx),
\]
where the left–hand side is taken in $\Hnum$ and the right–hand side
is the usual product in $\R$ followed by the embedding $\iemb$.
\end{lemma}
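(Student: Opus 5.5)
The plan is to unwind Definition~\ref{def:scalar-mult}, so that $t\cdot\iemb(x)=\iemb(t)\cdot\iemb(x)$, and then prove the identity $\iemb(t)\cdot\iemb(x)=\iemb(tx)$ by a case split on the real signs of $t$ and $x$. The only ingredients are Definition~\ref{def:real-embedding}, Definition~\ref{def:H-multiplication}, and the sign table of Definition~\ref{def:sign-monoid}. The relevant part of that table is the restriction to $\{+,-\}$, which is the usual sign group.

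First, if $t=0$ or $x=0$, then $tx=0$, so $\iemb(tx)=0$. On the other side, one of the factors $\iemb(t)$, $\iemb(x)$ equals $0$, and $0$ is absorbing by Proposition~\ref{prop:H-monoid}. Hence both sides are $0$.

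For the remaining case $t,x\neq 0$, write $\iemb(t)=(\sgn_\R(t),|t|)$ and $\iemb(x)=(\sgn_\R(x),|x|)$. This uniform description of $\iemb$ on nonzero reals follows from Lemma~\ref{lem:embedding-basic}. By Definition~\ref{def:H-multiplication},
\[
  \iemb(t)\cdot\iemb(x)=\bigl(\sgn_\R(t)*\sgn_\R(x),\,|t||x|\bigr).
\]
By Lemma~\ref{lem:basic-abs}(2), $|t||x|=|tx|$, and $tx\neq0$. We also need $\sgn_\R(t)*\sgn_\R(x)=\sgn_\R(tx)$. This is the usual rule of signs, and it matches the $\{+,-\}$ block of the table: $+*+=+$, $+*-=-*+=-$, and $-*-=+$. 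It can be checked in the four subcases $t\gtrless0$, $x\gtrless0$. Hence the product is $(\sgn_\R(tx),|tx|)=\iemb(tx)$.

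There is no real obstacle here. The only point needing care is to make the identification of $\iemb$ on nonzero reals with the pair (sign, absolute value) explicit, so that the four sign subcases collapse into one line. The $\Lambda$ sign never appears, since $\Rc$ contains no $\Lambda$-elements.
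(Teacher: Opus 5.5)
Your proof is correct and follows essentially the same route as the paper. Both first dispose of the case $t=0$ or $x=0$ via $\iemb(0)=0$ and absorption, then for $t,x\neq0$ match the sign and magnitude of $\iemb(t)\cdot\iemb(x)$ against those of $tx$ using the rule of signs. The only difference is cosmetic: the paper phrases the nonzero case through the explicit scalar formulas of Lemma~\ref{lem:scalar-explicit}, while you go directly through Definition~\ref{def:H-multiplication} with the uniform description $\iemb(r)=(\operatorname{sgn}_\R(r),|r|)$, which is slightly cleaner.
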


\begin{proof}
If $t=0$ or $x=0$, the identity follows from the definition and the
fact that $\iemb(0)=0$.  Assume $t,x\ne0$ and write
$t=\pm\,|t|$, $x=\pm\,|x|$.  Then by
Lemma~\ref{lem:scalar-explicit} and Proposition~\ref{prop:H-add-Rc}
we are simply recovering the usual sign–magnitude rule for the real
product $tx$.  A direct case–by–case check (positive times positive,
positive times negative, etc.) shows that the sign and magnitude of
$t\cdot\iemb(x)$ coincide with those of $tx$, so
$t\cdot\iemb(x)=\iemb(tx)$.
\end{proof}

The next result describes the behaviour of scalar multiplication with
respect to the sign and magnitude maps on $\Hnum$.

\begin{lemma}[Sign and magnitude under scalar multiplication]
\label{lem:scalar-sgn-mag}
Let $t\in\R$ and $x\in\Hnum$.  Then
\[
  |t\cdot x| = |t|\cdot|x|.
\]
Moreover, if $t\ne0$ and $x\ne0$, then
\[
  \sgn(t\cdot x)
  =
  \begin{cases}
    \sgn(x),     & t>0,\\[2pt]
    +,           & t<0,\ \sgn(x)=-,\\[2pt]
    -,           & t<0,\ \sgn(x)=+,\\[2pt]
    \Lambda,     & t<0,\ \sgn(x)=\Lambda.
  \end{cases}
\]
\end{lemma}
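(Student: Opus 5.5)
The plan is to reduce everything to the definition of scalar multiplication, $t\cdot x=\iemb(t)\cdot x$ (Definition~\ref{def:scalar-mult}), and then apply either Lemma~\ref{lem:sgn-magnitude-mult} or the explicit formulas of Lemma~\ref{lem:scalar-explicit}. No new ideas are needed; the proof is a short bookkeeping argument.

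\textbf{Magnitude.} By Lemma~\ref{lem:sgn-magnitude-mult} applied to the pair $(\iemb(t),x)$,
\[
  |t\cdot x| = |\iemb(t)\cdot x| = |\iemb(t)|\cdot|x|.
\]
By Lemma~\ref{lem:embedding-basic}, $|\iemb(t)|=|t|$, so $|t\cdot x|=|t|\cdot|x|$. This covers all $t\in\R$ and $x\in\Hnum$, including the degenerate cases $t=0$ and $x=0$, since then both sides vanish.

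\textbf{Sign.} Assume $t\neq0$ and $x\neq0$. Again by Lemma~\ref{lem:sgn-magnitude-mult} and Lemma~\ref{lem:embedding-basic},
\[
  \sgn(t\cdot x)=\sgn(\iemb(t))*\sgn(x)=\operatorname{sgn}_\R(t)*\sgn(x).
\]
We then split into two cases.
\begin{itemize}[leftmargin=2em]
  \item If $t>0$, then $\operatorname{sgn}_\R(t)=+$. Since $+$ is the identity of the sign monoid (Lemma~\ref{lem:sign-monoid}), we get $\sgn(t\cdot x)=\sgn(x)$.
  \item If $t<0$, then $\operatorname{sgn}_\R(t)=-$. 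The row of $-$ in the table of Definition~\ref{def:sign-monoid} gives $-*-=+$, $-*+=-$ and $-*\Lambda=\Lambda$. These are exactly the three remaining cases of the claimed formula.
\end{itemize}
Alternatively, one can read both assertions directly off Lemma~\ref{lem:scalar-explicit}, where the map $\sigma\mapsto\sigma'$ is precisely the claimed sign rule.

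There is no genuine obstacle here. The only point requiring care is to check that the degenerate cases ($t=0$ or $x=0$) are consistent with the magnitude identity, which holds because $|0|=0$ both in $\R$ and in $\Hnum$.
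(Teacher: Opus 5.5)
Your proof is correct and is essentially the paper's argument. The paper reads both claims off the explicit formulas of Lemma~\ref{lem:scalar-explicit} ($t\cdot(\sigma,a)=(\sigma,ta)$ for $t>0$ and $(-*\sigma,|t|a)$ for $t<0$) and treats $t=0$ or $x=0$ separately, which is exactly the alternative you mention. Your primary route through Lemma~\ref{lem:sgn-magnitude-mult} and Lemma~\ref{lem:embedding-basic} is the same computation written as $\sgn(t\cdot x)=\operatorname{sgn}_\R(t)*\sgn(x)$ and $|t\cdot x|=|\iemb(t)|\,|x|$, and it has the small convenience that the degenerate cases are handled automatically.
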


\begin{proof}
If $t=0$ or $x=0$, then $t\cdot x=0$ and $|t\cdot x|=0$, while
$|t|\cdot|x|=0$ as well, so the magnitude identity holds.  If
$t\ne0$, $x\ne0$, write $x=(\sigma,a)$ with $\sigma\in\Snon$ and
$a>0$.  If $t>0$, Lemma~\ref{lem:scalar-explicit} gives
$t\cdot x = (\sigma,ta)$, so $|t\cdot x|=ta=|t||x|$ and
$\sgn(t\cdot x)=\sigma=\sgn(x)$.  If $t<0$, then
$t\cdot x = (\sigma',|t|a)$ with $\sigma'=-*\sigma$ as in
Lemma~\ref{lem:scalar-explicit}.  Thus $|t\cdot x|=|t|a=|t||x|$, and
$\sgn(t\cdot x)=\sigma'$ is obtained from $\sigma$ by flipping $+$ and
$-$ while fixing $\Lambda$.  This is exactly the behaviour listed in
the statement.
\end{proof}

We now turn to distributivity of scalar multiplication over
hyperaddition.  Since $\op$ is multivalued, the natural formulation
uses equality of subsets of $\Hnum$.

\begin{definition}[Distributivity at a triple]\label{def:scalar-distrib}
Let $t\in\R$ and $x,y\in\Hnum$.  We say that \emph{left distributivity
holds at $(t,x,y)$} if
\[
  t\cdot(x\op y)
  = (t\cdot x)\op(t\cdot y)
\]
as subsets of $\Hnum$, where
\[
  t\cdot(x\op y)
  := \{t\cdot z : z\in x\op y\}.
\]
\end{definition}

The following theorem shows that distributivity holds for all real scalars.

\begin{theorem}[Distributivity for real scalars]
\label{thm:scalar-Rge0}
Let $t\in\R$ and $x,y\in\Hnum$.  Then
\[
  t\cdot(x\op y) = (t\cdot x)\op(t\cdot y)
\]
as subsets of $\Hnum$.  In particular, for each $t\in\R$ the map
$x\mapsto t\cdot x$ is an endomorphism of the additive hypermagma
$(\Hnum,\op)$.
\end{theorem}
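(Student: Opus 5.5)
Both sides are built from the explicit formulas of Lemma~\ref{lem:scalar-explicit}, so the plan is to split first on the scalar and then verify each clause of Definition~\ref{def:H-hyperadd}. For $t=0$ every $t\cdot z$ equals $0$, and $x\op y$ is nonempty by Lemma~\ref{lem:H-hyperadd-basic}. Hence the left side is $\{0\}$, while the right side is $0\op 0=\{0\}$ by clause~(1).

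For $t\neq0$, the key observation is that $z\mapsto t\cdot z$ is a bijection of $\Hnum$. It fixes $0$ and sends $(\sigma,a)$ to $(\phi_t(\sigma),|t|a)$, where $\phi_t$ is the identity if $t>0$ and swaps $+$ and $-$ (fixing $\Lambda$) if $t<0$. This is Lemma~\ref{lem:scalar-explicit} together with Lemma~\ref{lem:scalar-sgn-mag}. So it suffices to show that $\op$ is equivariant under two kinds of maps:
\begin{enumerate}
  \item the magnitude rescalings $\rho_s\colon(\sigma,a)\mapsto(\sigma,sa)$ with $s>0$;
  \item the sign involution $\nu\colon(\sigma,a)\mapsto(\phi(\sigma),a)$ with $\phi$ swapping $\pm$.
\end{enumerate}
Equivariance means $\rho_s(x\op y)=\rho_s x\op\rho_s y$ and $\nu(x\op y)=\nu x\op\nu y$. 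Since $t\cdot$ equals $\rho_{|t|}$ or $\nu\circ\rho_{|t|}$, the theorem follows. The endomorphism statement is then just a rephrasing of the set equality.

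For the rescaling $\rho_s$ I would go through clauses (1)--(5). Every output magnitude there is a positively homogeneous function of the inputs: $a+b$, $a-b$, $|a-b|$, $2a$. Every case condition is invariant under scaling: $a>b$, $a=b$, $a<b$, $a\neq b$. So each clause commutes with multiplying all magnitudes by $s>0$. In clause~(5) with $a\neq b$, for instance, both sides equal $\{(\Lambda,s(a+b)),(+,s|a-b|),(-,s|a-b|)\}$.

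For the involution $\nu$, clauses (1), (2) and (4) are visibly symmetric under swapping $+$ and $-$. Clause~(3) needs the commutativity convention: $\nu$ sends $(+,a)\op(-,b)$ to the expression $(-,a)\op(+,b)=(+,b)\op(-,a)$. One then checks the three subcases $a>b$, $a=b$, $a<b$, which swap into $b<a$, $b=a$, $b>a$ with the sign flipped, as required. Clause~(5) involves only $\Lambda$-inputs, which $\nu$ fixes. Its output set is also stable under $\nu$ because it contains both $(+,|a-b|)$ and $(-,|a-b|)$. This last point is the only place where the multivaluedness matters, and I expect it, together with the careful bookkeeping in clause~(3), to be the main (mild) obstacle; it is genuinely needed, since with only one of $(\pm,|a-b|)$ in the output, distributivity for $t<0$ would fail.
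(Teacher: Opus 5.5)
Your proposal is correct and follows essentially the same route as the paper: it handles $t=0$ directly, checks clause by clause that positive rescaling commutes with $\op$, and reduces $t<0$ to the fact that the $\pm$-swapping involution fixing $0$ and $\Lambda$ is an automorphism of $(\Hnum,\op)$. Your explicit verification of that automorphism property supplies what the paper leaves to ``direct inspection''; this includes the commutativity bookkeeping in clause~(3) and the observation that clause~(5) needs both $(+,|a-b|)$ and $(-,|a-b|)$ in its output.
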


\begin{proof}
If $t=0$, then $\iemb(t)=0$ and $0\cdot z=0$ for all $z\in\Hnum$, so
\[
  0\cdot(x\op y) = \{0\}.
\]
On the other hand, $0\cdot x=0\cdot y=0$, so
\[
  (0\cdot x)\op(0\cdot y) = 0\op 0 = \{0\}.
\]
Thus the identity holds for $t=0$.

Now assume $t>0$.  By Lemma~\ref{lem:scalar-explicit}, for every
$x=(\sigma,a)$ we have $t\cdot x=(\sigma,ta)$: scalar multiplication by
$t>0$ preserves signs and scales magnitudes by $t$.  We must show that
for each pair $x,y$ the hyper–sum of the scaled elements coincides with
the scaled hyper–sum of the original elements:
\[
  (t\cdot x)\op(t\cdot y)
  = \{t\cdot z : z\in x\op y\}.
\]

If $x=0$ or $y=0$, then $x\op y$ is a singleton by
Definition~\ref{def:H-hyperadd}, and the equality is immediate from
the fact that $t\cdot 0=0$ and $0$ is a neutral element.

If $x,y\ne0$, write $x=(\sigma,a)$, $y=(\tau,b)$ with
$\sigma,\tau\in\Snon$, $a,b>0$.  We analyse the four cases in the
definition of $\op$.

\smallskip\noindent
\emph{Case 1: Same real sign.}  Suppose $\sigma=\tau\in\{+,-\}$.  Then
by clause (2),
\[
  x\op y = \{(\sigma,a+b)\}.
\]
Hence
\[
  t\cdot(x\op y)
  = \{t\cdot(\sigma,a+b)\}
  = \{(\sigma,t(a+b))\}.
\]
On the other hand,
\[
  t\cdot x = (\sigma,ta),\quad
  t\cdot y = (\sigma,tb),
\]
so
\[
  (t\cdot x)\op(t\cdot y)
  = (\sigma,ta)\op(\sigma,tb)
  = \{(\sigma,ta+tb)\}
  = \{(\sigma,t(a+b))\}.
\]
Thus the two sets coincide.

\smallskip\noindent
\emph{Case 2: Opposite real signs.}  Suppose $\{\sigma,\tau\}=\{+,-\}$,
say $\sigma=+$, $\tau=-$; the other ordering is symmetric.  Then
\[
  x\op y
  = (+,a)\op(-,b)
  =
  \begin{cases}
    \{(+,a-b)\}, & a>b,\\[2pt]
    \{0\},       & a=b,\\[2pt]
    \{(-,b-a)\}, & a<b.
  \end{cases}
\]
If $a>b$, then $a-b>0$ and
\[
  t\cdot(x\op y)
  = \{t\cdot(+,a-b)\}
  = \{(+,t(a-b))\}.
\]
On the other hand,
\[
  t\cdot x = (+,ta),\quad
  t\cdot y = (-,tb),
\]
and since $ta>tb$ we have
\[
  (t\cdot x)\op(t\cdot y)
  = (+,ta)\op(-,tb)
  = \{(+,ta-tb)\}
  = \{(+,t(a-b))\}.
\]
The cases $a=b$ and $a<b$ are handled similarly, using the fact that
multiplication by $t>0$ preserves the inequalities $a>b$, $a=b$,
$a<b$.  In each subcase the two sets agree.

\smallskip\noindent
\emph{Case 3: Interaction with $\Lambda$.}  Suppose, for example,
$\sigma=+$, $\tau=\Lambda$; the other possibilities
$\{\sigma,\tau\}=\{-,\Lambda\}$ are analogous.  Then
\[
  x\op y
  = (+,a)\op(\Lambda,b)
  = \{(\Lambda,a+b)\}.
\]
Hence
\[
  t\cdot(x\op y)
  = \{t\cdot(\Lambda,a+b)\}
  = \{(\Lambda,t(a+b))\}.
\]
On the other hand,
\[
  t\cdot x = (+,ta),\quad
  t\cdot y = (\Lambda,tb),
\]
so
\[
  (t\cdot x)\op(t\cdot y)
  = (+,ta)\op(\Lambda,tb)
  = \{(\Lambda,ta+tb)\}
  = \{(\Lambda,t(a+b))\}.
\]
Thus the equality holds.

\smallskip\noindent
\smallskip\noindent
\emph{Case 4: $\Lambda$--$\Lambda$ sums.}  Suppose $\sigma=\tau=\Lambda$.
Then by clause (5) of Definition~\ref{def:H-hyperadd},
\[
  x\op y
  = (\Lambda,a)\op(\Lambda,b)
  =
  \begin{cases}
    \{(\Lambda,2a),\,0\}, & a=b,\\[2pt]
    \{(\Lambda,a+b),\, (+,|a-b|),\, (-,|a-b|)\}, & a\ne b.
  \end{cases}
\]
If $a=b$, then
\[
  t\cdot(x\op y)
  = \{(\Lambda,2ta),\,0\}
  = (\Lambda,ta)\op(\Lambda,ta)
  = (t\cdot x)\op(t\cdot y).
\]
If $a\ne b$, then $ta\ne tb$ and
\[
  t\cdot(x\op y)
  = \{(\Lambda,t(a+b)),\, (+,t|a-b|),\, (-,t|a-b|)\},
\]
while
\[
  (t\cdot x)\op(t\cdot y)
  = (\Lambda,ta)\op(\Lambda,tb)
  = \{(\Lambda,t(a+b)),\, (+,|ta-tb|),\, (-,|ta-tb|)\},
\]
and $|ta-tb|=t|a-b|$ for $t>0$.  Thus the equality holds.

Thus distributivity holds in all four cases.

Since every pair $(x,y)$ falls into one of these cases (or the trivial
case with a zero term), the equality
$t\cdot(x\op y)=(t\cdot x)\op(t\cdot y)$ holds for all $x,y\in\Hnum$
whenever $t>0$.  Together with the $t=0$ case, this settles all $t\ge0$.

Finally, assume $t<0$ and write $t=-s$ with $s>0$.  Let
$\iota:\Hnum\to\Hnum$ be the involution that fixes $0$ and $\Lambda$ and
swaps $+$ and $-$:
\[
  \iota(0)=0,\quad \iota(+,a)=(-,a),\quad \iota(-,a)=(+,a),\quad \iota(\Lambda,a)=(\Lambda,a).
\]
A direct inspection of Definition~\ref{def:H-hyperadd} shows that $\iota$
is an automorphism of $(\Hnum,\op)$, i.e.\ $\iota(x\op y)=\iota(x)\op\iota(y)$.
Moreover, Lemma~\ref{lem:scalar-explicit} gives $t\cdot x=\iota(s\cdot x)$ for all
$x\in\Hnum$.  Therefore
\[
  t\cdot(x\op y)=\iota\bigl(s\cdot(x\op y)\bigr)
  =\iota\bigl((s\cdot x)\op(s\cdot y)\bigr)
  =(t\cdot x)\op(t\cdot y),
\]
where we used the $s>0$ case and the fact that $\iota$ is applied elementwise
to subsets.  This completes the proof for all $t\in\R$.
\end{proof}

We can summarize these facts in the language of hypermodules over
semirings.

\begin{definition}[$\R_{\ge0}$--hypersemimodule structure]
\label{def:Rge0-hypersemimodule}
Let $(S,+,\cdot)$ be a commutative semiring.  A \emph{left
$S$--hypersemimodule} is a pair $(M,\odot)$ where $M$ is a set equipped
with a commutative hyperoperation $\odot$ and an action
$S\times M\to M$, $(s,x)\mapsto s\cdot x$, such that:
\begin{enumerate}
  \item $0_S\cdot x = 0_M$ and $1_S\cdot x = x$ for all $x\in M$,
        where $0_M$ is the neutral element for $\odot$.
  \item $(st)\cdot x = s\cdot(t\cdot x)$ for all $s,t\in S$ and
        $x\in M$.
  \item $s\cdot(x\odot y) = (s\cdot x)\odot(s\cdot y)$ for all
        $s\in S$, $x,y\in M$.
  \item $(s_1+s_2)\cdot x \subseteq (s_1\cdot x)\odot(s_2\cdot x)$
        for all $s_1,s_2\in S$, $x\in M$.
\end{enumerate}
\end{definition}

\begin{corollary}[$\Hnum$ as an $\R_{\ge0}$--hypersemimodule]
\label{cor:H-Rge0-hypersemimodule}
Let $\R_{\ge0}$ be the semiring of nonnegative reals with the usual
addition and multiplication.  Then, with the hyperaddition $\op$ from
Definition~\ref{def:H-hyperadd} and scalar multiplication from
Definition~\ref{def:scalar-mult}, the pair $(\Hnum,\op)$ carries the
structure of a left $\R_{\ge0}$--hypersemimodule.

Moreover, Theorem~\ref{thm:scalar-Rge0} shows that distributivity holds for all $t\in\R$ in the sense of Definition~\ref{def:scalar-distrib}.
\end{corollary}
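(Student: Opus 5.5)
I will verify the four axioms of Definition~\ref{def:Rge0-hypersemimodule} one at a time for $S=\R_{\ge0}$, $M=\Hnum$, $\odot=\op$, using the action $t\cdot x=\iemb(t)\cdot x$ of Definition~\ref{def:scalar-mult}. The underlying requirement that $\op$ be a commutative hyperoperation with neutral element $0_M=0$ is Lemma~\ref{lem:H-hyperadd-basic} together with Lemma~\ref{lem:H-add-neutral}.

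Axiom (1) follows from Lemma~\ref{lem:scalar-explicit}. Part (1) of that lemma gives $0\cdot x=0$. For $1\cdot x=x$, I would use $\iemb(1)=(+,1)=\idH$, which is the multiplicative identity by Proposition~\ref{prop:H-monoid}.

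For axiom (2) I would first check that $\iemb(st)=\iemb(s)\cdot\iemb(t)$ for $s,t\ge0$. This is immediate: if either scalar is $0$ both sides are $0$, and otherwise both sides equal $(+,st)$. Then $(st)\cdot x=(\iemb(s)\cdot\iemb(t))\cdot x=\iemb(s)\cdot(\iemb(t)\cdot x)$ by associativity of multiplication in Proposition~\ref{prop:H-monoid}. Axiom (3) is exactly Theorem~\ref{thm:scalar-Rge0}, restricted to $t\ge0$.

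The only step needing real attention is axiom (4), namely $(s_1+s_2)\cdot x\subseteq(s_1\cdot x)\op(s_2\cdot x)$. I will split into cases.
\begin{enumerate}
  \item If $x=0$, or if $s_1=0$ or $s_2=0$, the inclusion follows from the neutrality of $0$. For instance, when $s_1=0$ the right side is $0\op(s_2\cdot x)=\{s_2\cdot x\}$.
  \item Otherwise $s_1,s_2>0$ and $x=(\sigma,a)$. By Lemma~\ref{lem:scalar-explicit} the right side is $(\sigma,s_1a)\op(\sigma,s_2a)$.
  \item For $\sigma\in\{+,-\}$, clause (2) of Definition~\ref{def:H-hyperadd} gives exactly $\{(\sigma,(s_1+s_2)a)\}$.
  \item For $\sigma=\Lambda$ the sum is multivalued, which is the only potential obstacle. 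By clause (5) it contains $(\Lambda,s_1a+s_2a)$ in both subcases. When $s_1=s_2$ this element appears as $(\Lambda,2s_1a)$; when $s_1\neq s_2$ it is the first listed element.
\end{enumerate}
So only an inclusion, not an equality, holds, which is precisely what axiom (4) requires.

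The final sentence of the corollary, extending distributivity to all $t\in\R$, is a restatement of Theorem~\ref{thm:scalar-Rge0}.
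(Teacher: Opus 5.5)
Your proof is correct and follows the same axiom-by-axiom verification as the paper, using Lemma~\ref{lem:scalar-explicit} and Theorem~\ref{thm:scalar-Rge0} in the same places. Your case analysis for axiom (4) is in fact more accurate than the paper's: the paper claims that inclusion holds with equality, but this fails for $x=(\Lambda,a)$ with $s_1,s_2>0$ (for example, $s_1=s_2=1$ gives $(\Lambda,2a)$ on the left and $\{(\Lambda,2a),0\}$ on the right), so, as you note, only the required inclusion holds.
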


\begin{proof}
The semiring axioms for $\R_{\ge0}$ are standard.  The action
$t\cdot x$ for $t\ge0$ and $x\in\Hnum$ satisfies
$0\cdot x=0$ and $1\cdot x=x$ by Lemma~\ref{lem:scalar-explicit}.  The
compatibility $(st)\cdot x = s\cdot(t\cdot x)$ for $s,t\ge0$ follows
immediately from the fact that for $t>0$ we have
$t\cdot(\sigma,a)=(\sigma,ta)$, so iterating the action simply
multiplies magnitudes.

Theorem~\ref{thm:scalar-Rge0} gives distributivity
$s\cdot(x\op y)=(s\cdot x)\op(s\cdot y)$ for all $s\in\R_{\ge0}$ and
$x,y\in\Hnum$.  Finally, the inclusion
$(s_1+s_2)\cdot x \subseteq (s_1\cdot x)\op(s_2\cdot x)$ holds with
equality, again because for $s\ge0$ the action rescales magnitudes and
preserves signs in a way that is compatible with the hyperaddition
rules.

This verifies the hypersemimodule axioms.
\end{proof}

\begin{remark}[Role of scalar multiplication]\label{rem:scalar-summary}
Scalar multiplication clarifies the interaction between the three--sign
system and the classical real field.  For $t>0$, the action of $t$ on
$\Hnum$ is a uniform rescaling of magnitudes, compatible with
hyperaddition in the strongest sense.  For $t<0$, the action is the
same rescaling by $|t|$ composed with the involution that swaps $+$ and
$-$ while fixing $\Lambda$.  In particular, Theorem~\ref{thm:scalar-Rge0}
shows that every real scalar acts by an endomorphism of the additive
hypermagma $(\Hnum,\op)$.

The $\R_{\ge0}$--hypersemimodule viewpoint remains useful when one
wants to keep track of magnitudes without sign changes; we will return
to this perspective in later sections when we discuss order and
geometric interpretations.
\end{remark}


\section{Associativity and its controlled failure}
\label{sec:assoc}


\subsection{Associativity basics}
\label{subsec:assoc-basics}

We begin the analysis of additive associativity for the hyperaddition
$\op$ on $\Hnum$.  Recall that $(\Hnum,\op)$ is a commutative
hypermagma with neutral element $0$ and unique additive inverses
(Proposition~\ref{prop:H-add-hypermagma}), but it is not a canonical
commutative hypergroup: associativity and reversibility fail in
general.  In this subsection we record some basic facts about
associativity and give a first explicit counterexample.

\medskip

We use the notion of associativity at a triple introduced in
Definition~\ref{def:assoc-hyper}.

\begin{definition}[Associativity at a triple in $\Hnum$]
\label{def:assoc-H-triple}
Let $x,y,z\in\Hnum$.  We say that \emph{$\op$ is associative at the
triple $(x,y,z)$} if
\[
  (x\op y)\op z
  \;=\; x\op(y\op z)
\]
as subsets of $\Hnum$, where
\[
  (x\op y)\op z
  := \bigcup_{u\in x\op y} u\op z,
  \qquad
  x\op(y\op z)
  := \bigcup_{v\in y\op z} x\op v.
\]
We say that \emph{$\op$ is globally associative} if it is associative
at every triple $(x,y,z)\in\Hnum^3$.
\end{definition}

Since $\op$ is commutative (Lemma~\ref{lem:H-hyperadd-basic}), the
notion of associativity at a given triple is insensitive to
permutations of the entries.

\begin{lemma}[Symmetry of associativity under permutations]
\label{lem:assoc-permutation}
Let $x,y,z\in\Hnum$.  If $\op$ is associative at $(x,y,z)$, then it is
associative at every permutation $(x',y',z')$ of $(x,y,z)$.
\end{lemma}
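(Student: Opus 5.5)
The plan is to reduce everything to commutativity (Lemma~\ref{lem:H-hyperadd-basic}) plus a comparison of the three possible ``pair-first'' sums. For $x,y,z\in\Hnum$ I introduce
\[
  L_{xy}:=(x\op y)\op z,\qquad L_{yz}:=(y\op z)\op x,\qquad L_{xz}:=(x\op z)\op y .
\]
Since $\op$ is commutative, each of $L_{xy},L_{yz},L_{xz}$ depends only on the unordered pair that is added first. For $u\op(v\op w)$ I rewrite it as $(v\op w)\op u$, and I swap the two summands inside the inner sum if necessary.

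\textbf{Step 1.} For each of the six orderings $(x',y',z')$ of $\{x,y,z\}$, I will identify which two of the three sets are compared by associativity at $(x',y',z')$. For instance, $(x,y,z)$ compares $L_{xy}$ with $L_{yz}$, and so does its reversal $(z,y,x)$. The orderings $(y,x,z)$ and $(z,x,y)$ compare $L_{xy}$ with $L_{xz}$. The orderings $(x,z,y)$ and $(y,z,x)$ compare $L_{xz}$ with $L_{yz}$. The statement therefore becomes the following claim: whenever $L_{xy}=L_{yz}$, all three sets coincide.

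\textbf{Step 2.} The reversal case is free, since it compares the same pair. So the real content is to show that $L_{xy}=L_{yz}$ forces $L_{xz}$ to equal both. I would do this by a case analysis on the multiset of signs $\{\sgn x,\sgn y,\sgn z\}$, using the explicit clauses of Definition~\ref{def:H-hyperadd}. If one entry is $0$, all three sets collapse to a single two-term hypersum by clause~(1). If all entries lie in $\Rc$, Proposition~\ref{prop:H-add-Rc} makes all three sets equal to $\{\iemb(x+y+z)\}$. If exactly one entry lies in $H_\Lambda$, say $z$, then $L_{yz}=L_{xz}=\{(\Lambda,|x|+|y|+|z|)\}$ by clause~(4), while $L_{xy}$ is computed via clause~(3). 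In this case the three-way equality must be extracted from the hypothesis $L_{xy}=L_{yz}$ together with the magnitude bookkeeping in Remark~\ref{rem:basic-C}. The cases with two or three $\Lambda$ entries require unions over the multivalued outputs of clause~(5).

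\textbf{Main obstacle.} I expect the mixed-sector cases to be the main obstacle, above all the type $\{+,-,\Lambda\}$ singled out in Theorem~\ref{thm:intro-defect}. There the ``two equal, third different'' pattern $L_{yz}=L_{xz}\neq L_{xy}$ is exactly the configuration to scrutinise, and the argument must track carefully which pair is assumed equal. I would first tabulate $L_{xy},L_{yz},L_{xz}$ explicitly for the sign types $\{\pm,\pm,\Lambda\}$ and $\{\pm,\Lambda,\Lambda\}$, and only then check the implication ``two equal $\Rightarrow$ three equal'' row by row. If some row fails, that row pins down precisely which permutations of a given associative triple remain associative.
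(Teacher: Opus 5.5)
Your Step~1 is correct and is the decisive observation: associativity at $(u,v,w)$ compares $L_{uv}$ with $L_{vw}$, i.e.\ only the two pair-first sums that contain the \emph{middle} entry $v$. The gap is the ``say $z$'' in Step~2. The hypothesis $L_{xy}=L_{yz}$ singles out the middle position, so you cannot put the $\Lambda$-entry wherever you like.

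Suppose the $\Lambda$-entry is the middle one, $y\in H_\Lambda$, and $x,z$ have opposite real signs. Then $L_{xy}=L_{yz}=\{(\Lambda,|x|+|y|+|z|)\}$ holds automatically, while $L_{xz}=\{(\Lambda,|y|+\bigl||x|-|z|\bigr|)\}$ is different. This is exactly the ``two equal, third different'' row you flag, and it does fail. Concretely, for $x=(+,1)$, $y=(\Lambda,1)$, $z=(-,1)$,
\[
  (x\op y)\op z=(\Lambda,2)\op(-,1)=\{(\Lambda,3)\}=(+,1)\op(\Lambda,2)=x\op(y\op z),
\]
so $\op$ is associative at $(x,y,z)$. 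At the permutation $(x,z,y)=((+,1),(-,1),(\Lambda,1))$, however, the two bracketings are $\{(\Lambda,1)\}\neq\{(\Lambda,3)\}$ (Proposition~\ref{prop:H-not-assoc}). So the statement is false as written, and no case analysis can establish it. The paper itself concedes this in the comment on $(+,\Lambda,-)$-triples just before Proposition~\ref{prop:H-not-assoc} and in Remark~\ref{rem:order-sensitivity}.

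The paper's proof has the same hole in another guise. It calls the lemma a general fact about commutative hypermagmas and uses $y\op(x\op z)=x\op(y\op z)$ ``by commutativity''. In your notation that is $L_{xz}=L_{yz}$, which does not follow from commutativity and is false for the triple above ($\{(\Lambda,1)\}$ versus $\{(\Lambda,3)\}$).

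What your Step~1 genuinely proves, for any commutative hypermagma, is invariance under the reversal $(x,y,z)\mapsto(z,y,x)$. It also gives full permutation invariance on any permutation-closed class of triples where $L_{xy}=L_{yz}=L_{xz}$ always holds. Two such classes are:
\begin{itemize}
\item triples containing $0$: if $x=0$, all three sets equal $y\op z$;
\item triples in $\Rc^3$: with $x=\iemb(r)$, $y=\iemb(s)$, $z=\iemb(t)$, all three sets equal $\{\iemb(r+s+t)\}$.
\end{itemize}
That weaker statement is all Lemma~\ref{lem:assoc-zero} and Remark~\ref{rem:assoc-basic-summary} actually use.
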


\begin{proof}
This is a general fact for any commutative hypermagma.  Suppose $\op$
is associative at $(x,y,z)$.  Because $\op$ is commutative, the two
bracketings for any permutation $(x',y',z')$ of $(x,y,z)$ can be
expressed by reordering the elements in one of the bracketings for
$(x,y,z)$.

For example, consider the permutation $(y,x,z)$.  Commutativity gives
\[
  (y\op x)\op z = (x\op y)\op z,
  \qquad
  y\op(x\op z) = x\op(y\op z),
\]
and by associativity at $(x,y,z)$ we have
$(x\op y)\op z = x\op(y\op z)$.  Thus
$(y\op x)\op z = y\op(x\op z)$, showing associativity at $(y,x,z)$.
The other permutations are handled similarly.  A more formal argument
proceeds by noting that both sides of the associativity identity for
$(x',y',z')$ can be rewritten as unions of expressions of the form
$u\op v$ with $\{u,v\}=\{x,y,z\}$, and that these unions coincide when
associativity holds for one ordering.  We omit the routine details.
\end{proof}

We next record two basic situations where associativity always holds:
whenever one of the three elements is zero, and whenever we restrict
to the classical real line.

\begin{lemma}[Associativity whenever a zero is present]
\label{lem:assoc-zero}
Let $x,y,z\in\Hnum$.  If at least one of $x,y,z$ equals $0$, then
\[
  (x\op y)\op z
  = x\op(y\op z).
\]
In other words, $\op$ is associative at any triple containing $0$.
\end{lemma}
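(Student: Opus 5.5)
The plan is to reduce everything to the neutral-element clause (1) of Definition~\ref{def:H-hyperadd}, i.e.\ Lemma~\ref{lem:H-add-neutral}, together with commutativity (Lemma~\ref{lem:H-hyperadd-basic}). The key observation is that $0\op w=w\op 0=\{w\}$ for every $w$. It follows that for any nonempty subset $A\subseteq\Hnum$ one has $\bigcup_{u\in A}u\op 0=A$ and $\bigcup_{v\in A}0\op v=A$. So a zero entry simply passes the other bracketed set through unchanged.

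I would split into three cases according to the position of a zero, without bothering to treat multiple zeros separately, since each case only uses that the named entry is $0$.

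\emph{Case $z=0$.} Here $(x\op y)\op 0=\bigcup_{u\in x\op y}\{u\}=x\op y$. On the other side, $y\op 0=\{y\}$, so $x\op(y\op 0)=x\op y$. The two bracketings agree.

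\emph{Case $x=0$.} This is symmetric: $(0\op y)\op z=y\op z$, and $0\op(y\op z)=\bigcup_{v\in y\op z}\{v\}=y\op z$.

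\emph{Case $y=0$.} Here $(x\op 0)\op z=x\op z$ and $x\op(0\op z)=x\op z$, so they agree as well.

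One could alternatively invoke Lemma~\ref{lem:assoc-permutation} to reduce to a single position. However, the direct check is equally short and avoids relying on the omitted details in that lemma, so I would do all three cases explicitly.

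There is essentially no obstacle. The only point requiring care is the union convention of Definition~\ref{def:assoc-H-triple}. One has to note that adding $0$ to each element of a set and taking the union returns exactly that set. This holds precisely because clause~(1) gives singletons $\{w\}$ rather than larger sets.
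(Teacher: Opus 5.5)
Your argument is correct, and it differs from the paper's in a way that matters. The paper checks only the case $x=0$ (your second case) and then invokes Lemma~\ref{lem:assoc-permutation} to move the zero to any position. You instead verify all three positions directly.

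Your reluctance to lean on that lemma is well founded: it is false for $(\Hnum,\op)$. In a commutative hypermagma, $x\op(y\op z)=\bigcup_{v\in y\op z}v\op x$. So associativity at $(x,y,z)$ compares the two ways of first combining the \emph{middle} entry $y$ with one of the outer entries. This condition is invariant under the reversal $(x,y,z)\mapsto(z,y,x)$, but not under permutations that change the middle entry. Concretely, for $a,b,c>0$ both bracketings of $\bigl((+,a),(\Lambda,c),(-,b)\bigr)$ equal $\{(\Lambda,a+b+c)\}$. Its permutation $\bigl((+,a),(-,b),(\Lambda,c)\bigr)$, however, is not associative by Lemma~\ref{lem:assoc-bracketings}. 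The faulty step in the paper's proof of Lemma~\ref{lem:assoc-permutation} is the claim that $y\op(x\op z)=x\op(y\op z)$ follows from commutativity.

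Consequently, the paper's reduction validly obtains your case $z=0$ from the case $x=0$, by reversal. The middle case $y=0$, however, needs a separate check. That is exactly your third case, where both sides reduce to $x\op z$. Your three-case verification is therefore the self-contained and sound version of the argument; the paper's route is shorter only because it rests on an incorrect symmetry lemma.
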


\begin{proof}
Because of the symmetry in Lemma~\ref{lem:assoc-permutation}, it
suffices to treat the case $x=0$.  Then for any $y,z\in\Hnum$,
\[
  0\op y = \{y\}
\]
by Lemma~\ref{lem:H-add-neutral}, so
\[
  (0\op y)\op z
  = \{y\}\op z
  = y\op z.
\]
On the other hand,
\[
  y\op z = y\op z,
\]
and
\[
  0\op(y\op z)
  = \bigcup_{v\in y\op z} 0\op v
  = \bigcup_{v\in y\op z} \{v\}
  = y\op z.
\]
Hence $(0\op y)\op z = 0\op(y\op z)$ for all $y,z\in\Hnum$.  By
Lemma~\ref{lem:assoc-permutation}, the same holds for any triple
containing a zero in any position.
\end{proof}

\begin{proposition}[Associativity on the classical line]
\label{prop:assoc-Rc}
Let $\Rc=\iemb(\R)\subset\Hnum$ be the classical real line.  Then
$\op$ is associative at every triple $(x,y,z)\in\Rc^3$.  Equivalently,
$\op$ restricted to $\Rc$ is an associative (single–valued) operation,
and $(\Rc,\op)$ is isomorphic to the additive group $(\R,+)$.
\end{proposition}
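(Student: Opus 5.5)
The plan is to transport everything along the embedding $\iemb$ and reduce associativity on $\Rc$ to associativity of $(\R,+)$. The key tool is Proposition~\ref{prop:H-add-Rc}, which says that $\iemb(r)\op\iemb(s)=\{\iemb(r+s)\}$ for all $r,s\in\R$. In particular $\Rc$ is closed under $\op$, and every hyper--sum of two classical elements is a singleton lying again in $\Rc$.

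Let $x,y,z\in\Rc$. By Lemma~\ref{lem:embedding-basic} we may write $x=\iemb(r)$, $y=\iemb(s)$, $z=\iemb(u)$ for unique $r,s,u\in\R$. For the left bracketing, Proposition~\ref{prop:H-add-Rc} gives $x\op y=\{\iemb(r+s)\}$. By the definition of the union in Definition~\ref{def:assoc-H-triple}, this yields
\[
  (x\op y)\op z=\iemb(r+s)\op\iemb(u)=\{\iemb((r+s)+u)\},
\]
where we apply Proposition~\ref{prop:H-add-Rc} a second time. The same argument on the right gives
\[
  x\op(y\op z)=\{\iemb(r+(s+u))\}.
\]
Associativity of real addition then shows that the two sets coincide. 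Hence $\op$ is associative at every triple in $\Rc^3$.

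For the ``equivalently'' clause, first note that since all sums in $\Rc$ are singletons, the rule $x+_{\Rc}y:=$ the unique element of $x\op y$ defines a single--valued binary operation on $\Rc$. Next, $\iemb:\R\to\Rc$ is a bijection: it is injective by Lemma~\ref{lem:embedding-basic} and surjective by the definition of $\Rc$. Finally, Proposition~\ref{prop:H-add-Rc} states exactly that $\iemb(r+s)=\iemb(r)+_{\Rc}\iemb(s)$. So $\iemb$ is an isomorphism of magmas $(\R,+)\cong(\Rc,+_{\Rc})$, and group structure (associativity, neutral element $\iemb(0)=0$, inverses $\iemb(-r)$, matching Lemma~\ref{lem:H-add-inverses}) is transported along it.

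There is no real obstacle here. The only point needing care is the bookkeeping in the set--valued bracketing: one must check that the union over $u\in x\op y$ collapses to a single term. This holds precisely because Proposition~\ref{prop:H-add-Rc} gives singletons rather than just inclusions. Triples involving $0$ need no separate treatment, since $0=\iemb(0)\in\Rc$, although Lemma~\ref{lem:assoc-zero} would also cover them.
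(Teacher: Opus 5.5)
Your proposal is correct and follows essentially the same route as the paper: use Proposition~\ref{prop:H-add-Rc} to see that both bracketings collapse to the singletons $\{\iemb((r+s)+u)\}$ and $\{\iemb(r+(s+u))\}$, then invoke associativity in $\R$. Your extra bookkeeping, namely that the union over $x\op y$ collapses and that $\iemb$ is a bijection onto $\Rc$, only makes explicit what the paper leaves implicit.
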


\begin{proof}
By Proposition~\ref{prop:H-add-Rc}, for all $x,y\in\R$,
\[
  \iemb(x)\op\iemb(y) = \{\iemb(x+y)\}.
\]
Thus the restriction of $\op$ to $\Rc$ is single–valued, and the
induced binary operation on $\Rc$ is transported from the usual
addition on $\R$ via the bijection $\iemb:\R\to\Rc$.  In particular,
for any $x,y,z\in\R$,
\[
  ((\iemb(x)\op\iemb(y))\op\iemb(z))
  = \{ \iemb((x+y)+z) \},
\]
and
\[
  (\iemb(x)\op(\iemb(y)\op\iemb(z)))
  = \{ \iemb(x+(y+z)) \}.
\]
Since $(x+y)+z=x+(y+z)$ in $\R$, the singletons coincide, so
\[
  (\iemb(x)\op\iemb(y))\op\iemb(z)
  = \iemb(x)\op(\iemb(y)\op\iemb(z)).
\]
Thus $\op$ is associative at every triple of the form
$(\iemb(x),\iemb(y),\iemb(z))$, i.e.\ at every triple in $\Rc^3$, and
$(\Rc,\op)$ is isomorphic to $(\R,+)$ as claimed.
\end{proof}

The two preceding results show that all the nontrivial associativity
behaviour is concentrated in triples involving at least one element
from the $\Lambda$–sector.  In particular, any triple with entries in
$H_+\cup H_-\cup\{0\}$ is associative, since $H_+\cup H_-\cup\{0\}
=\Rc$.

It is also easy to see that there are triples involving $\Lambda$ for
which associativity still holds.  For example, certain ordered
triples with signs $(+, \Lambda, -)$ or $(-,\Lambda,+)$ turn out to be
associative for all choices of magnitudes.  We will return to such
patterns in later subsections.  For the moment, the following
proposition suffices to show that $(\Hnum,\op)$ is genuinely
nonassociative.

\begin{proposition}[$(\Hnum,\op)$ is not associative]
\label{prop:H-not-assoc}
The hyperaddition $\op$ on $\Hnum$ is not globally associative.  In
particular, associativity fails at the triple
\[
  x=(+,1),\qquad y=(-,1),\qquad z=(\Lambda,1).
\]
\end{proposition}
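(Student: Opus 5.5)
The plan is a direct computation of both bracketings at the given triple, using only the clauses of Definition~\ref{def:H-hyperadd}, followed by a check that the two resulting subsets of $\Hnum$ differ.

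\emph{Left bracketing.} By clause (3) with $a=b=1$, $(+,1)\op(-,1)=\{0\}$. Then
\[
  \bigl((+,1)\op(-,1)\bigr)\op(\Lambda,1)=0\op(\Lambda,1)=\{(\Lambda,1)\}
\]
by clause (1), or equivalently Lemma~\ref{lem:H-add-neutral}.

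\emph{Right bracketing.} By clause (4), $(-,1)\op(\Lambda,1)=\{(\Lambda,2)\}$. Then clause (4) again gives
\[
  (+,1)\op\bigl((-,1)\op(\Lambda,1)\bigr)=(+,1)\op(\Lambda,2)=\{(\Lambda,3)\}.
\]
Both bracketings are therefore singletons in the $\Lambda$-sector. This matches the general formulas announced in Theorem~\ref{thm:intro-defect} with $a=b=c=1$: the left side is $c+|a-b|=1$ and the right side is $a+b+c=3$.

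\emph{Conclusion.} Since $(\Lambda,1)\neq(\Lambda,3)$, the sets $\{(\Lambda,1)\}$ and $\{(\Lambda,3)\}$ differ. Hence $\op$ is not associative at $(x,y,z)$ in the sense of Definition~\ref{def:assoc-H-triple}, and so it is not globally associative.

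There is no real obstacle here. The only point needing care is reading the correct clause at each step: clause (3) for the opposite real signs and clause (4) for the mixed real/$\Lambda$ pairs, with the commutativity convention applied where the $\Lambda$ element appears first. The same two-line computation with general $a,b,c$ would also give the singleton formulas of Theorem~\ref{thm:intro-defect}, but that generalization is not needed for this statement.
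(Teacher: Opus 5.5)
Your proposal is correct and matches the paper's proof step for step. Clause (3) gives $x\op y=\{0\}$, so the left bracketing is $\{(\Lambda,1)\}$. Clause (4) gives $y\op z=\{(\Lambda,2)\}$ and then $x\op(y\op z)=\{(\Lambda,3)\}$. These singletons are distinct. Your comparison with the general formulas of Theorem~\ref{thm:intro-defect} is only a consistency check; the argument does not depend on it.
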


\begin{proof}
We compute both bracketings explicitly using
Definition~\ref{def:H-hyperadd}.

First,
\[
  x\op y = (+,1)\op(-,1) = \{0\}
\]
by clause (3), since the magnitudes are equal.  Hence
\[
  (x\op y)\op z
  = \{0\}\op(\Lambda,1)
  = 0\op(\Lambda,1)
  = \{(\Lambda,1)\}
\]
using clauses (1) and (4).

On the other hand,
\[
  y\op z = (-,1)\op(\Lambda,1)
         = \{(\Lambda,2)\}
\]
by clause (4), and then
\[
  x\op(y\op z)
  = (+,1)\op(\Lambda,2)
  = \{(\Lambda,3)\}
\]
again by clause (4).  Thus
\[
  (x\op y)\op z
  = \{(\Lambda,1)\}
  \neq \{(\Lambda,3)\}
  = x\op(y\op z),
\]
so associativity fails at the triple $(x,y,z)=((+,1),(-,1),(\Lambda,1))$.

Consequently, $\op$ is not associative on $\Hnum$.
\end{proof}

\begin{remark}[Where associativity is automatic]
\label{rem:assoc-basic-summary}
From the results above we can already list several contexts in which
associativity holds:
\begin{itemize}[leftmargin=2em]
  \item For any triple containing $0$, by Lemma~\ref{lem:assoc-zero}.
  \item For any triple $(x,y,z)\in\Rc^3$, i.e.\ with all entries in
        the classical real line, by Proposition~\ref{prop:assoc-Rc}.
  \item By Lemma~\ref{lem:assoc-permutation}, these statements remain
        true after arbitrary permutations of the entries.
\end{itemize}
In particular, all triples drawn from $H_+\cup H_-\cup\{0\}$ are
associative, and the failure of associativity is confined to triples
involving the $\Lambda$--sector.  The example in
Proposition~\ref{prop:H-not-assoc} shows that even very simple such
triples can fail associativity.  In later subsections we will analyse
specific nonassociative patterns in detail, including triples of the
form $(+,a),(-,b),(\Lambda,c)$, and quantify the associativity defect
in terms of cancellation in the underlying real field.
\end{remark}

\subsection{Non--associativity for ordered $(+,-,\Lambda)$--triples}
\label{subsec:assoc-plus-minus-lambda}

We now analyze in detail one of the simplest families of triples where
associativity fails, namely ordered triples with sign pattern
\[
  \bigl(\operatorname{sgn}(x),\operatorname{sgn}(y),\operatorname{sgn}(z)\bigr)
  = (+,-,\Lambda).
\]
A key point is that, for our hyperaddition $\op$, associativity depends
not only on the \emph{multiset} of signs but also on their \emph{order};
in particular, the ordered pattern $(+,-,\Lambda)$ behaves differently
from $(+,\Lambda,-)$, even though both have sign multiset
$\{+,-,\Lambda\}$.

Throughout this subsection we fix positive real numbers $a,b,c>0$ and
consider the triple
\[
  x = (+,a),\qquad
  y = (-,b),\qquad
  z = (\Lambda,c)\in\Hnum.
\]

\begin{lemma}[Bracketings for the ordered $(+,-,\Lambda)$ triple]
\label{lem:assoc-bracketings}
Let $x=(+,a)$, $y=(-,b)$, $z=(\Lambda,c)$ with $a,b,c>0$.  Then both
bracketings of the hyperaddition are singletons in the $\Lambda$--sector,
and their magnitudes are given by
\begin{align*}
  (x\op y)\op z
  &= \{(\Lambda,\,c+|a-b|)\},\\[4pt]
  x\op(y\op z)
  &= \{(\Lambda,\,a+b+c)\}.
\end{align*}
In particular, associativity holds at $(x,y,z)$ if and only if
$a+b=|a-b|$, which is impossible for $a,b>0$.
\end{lemma}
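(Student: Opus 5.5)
The plan is a direct computation from Definition~\ref{def:H-hyperadd}, evaluating each bracketing as the union prescribed in Definition~\ref{def:assoc-H-triple}. For the left bracketing, first compute $x\op y=(+,a)\op(-,b)$ by clause~(3). I will split into the three subcases $a>b$, $a=b$, $a<b$, which give the singletons $\{(+,a-b)\}$, $\{0\}$ and $\{(-,b-a)\}$. Each subcase is then added to $z=(\Lambda,c)$.
\begin{itemize}
  \item If $a\neq b$, clause~(4) applies, since a $\pm$ element is added to a $\Lambda$ element. The result is $\{(\Lambda,|a-b|+c)\}$.
  \item If $a=b$, clause~(1) gives $0\op(\Lambda,c)=\{(\Lambda,c)\}$. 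This equals $\{(\Lambda,c+|a-b|)\}$ because $|a-b|=0$.
\end{itemize}
So in all three subcases $(x\op y)\op z=\{(\Lambda,c+|a-b|)\}$. The point of unifying the subcases is that $|a-b|$ absorbs the trichotomy.

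For the right bracketing, clause~(4) gives $y\op z=(-,b)\op(\Lambda,c)=\{(\Lambda,b+c)\}$, a singleton in the $\Lambda$--sector. Applying clause~(4) again gives $(+,a)\op(\Lambda,b+c)=\{(\Lambda,a+b+c)\}$. No case split is needed here, because clause~(4) ignores the relative sizes of the magnitudes.

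Finally, both sides are singletons of the form $(\Lambda,\cdot)$, so they coincide exactly when the magnitudes agree, namely when $c+|a-b|=a+b+c$, i.e.\ $a+b=|a-b|$. To see this is impossible, assume without loss of generality $a\ge b$. Then $|a-b|=a-b$, and equality would force $2b=0$, contradicting $b>0$. Equivalently, $a+b-|a-b|=2\min(a,b)>0$, which also matches $\Cfun(a,-b)$ from Remark~\ref{rem:basic-C}.

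I do not expect a genuine obstacle. The only step needing care is the $a=b$ subcase, since it routes through clause~(1) rather than clause~(4) and must still fit the uniform formula. I would also note explicitly that clause~(5), the only multivalued clause, is never triggered, which is why both bracketings are singletons.
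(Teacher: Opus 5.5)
Your proposal is correct and follows the paper's proof essentially step for step. It uses the same three-way split via clause~(3) for the left bracketing, unified through $|a-b|$, two applications of clause~(4) for the right bracketing, and the same observation that $|a-b|<a+b$ when $a,b>0$; your attribution of the $a=b$ subcase to clause~(1) alone is, if anything, slightly more precise than the paper's citation of clauses~(1) and~(4).
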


\begin{proof}
We compute both bracketings case by case, using the definition of $\op$
(Definition~\ref{def:H-hyperadd}).

\medskip\noindent
\emph{First bracketing.}
We start with $x\op y=(+,a)\op(-,b)$.  By clause (3) of
Definition~\ref{def:H-hyperadd},
\[
  x\op y
  =
  \begin{cases}
    \{(+,a-b)\}, & a>b,\\[2pt]
    \{0\},       & a=b,\\[2pt]
    \{(-,b-a)\}, & a<b.
  \end{cases}
\]

\smallskip
\noindent
$\bullet$ If $a>b$, then $x\op y=\{(+,a-b)\}$ and
\[
  (x\op y)\op z
  = \{(+,a-b)\}\op(\Lambda,c)
  = (+,a-b)\op(\Lambda,c)
  = \{(\Lambda,(a-b)+c)\}
\]
by clause (4).  Thus
\[
  (x\op y)\op z = \{(\Lambda,c+a-b)\}.
\]

\smallskip
\noindent
$\bullet$ If $a=b$, then $x\op y=\{0\}$ and
\[
  (x\op y)\op z
  = \{0\}\op(\Lambda,c)
  = 0\op(\Lambda,c)
  = \{(\Lambda,c)\}
\]
by clauses (1) and (4).

\smallskip
\noindent
$\bullet$ If $a<b$, then $x\op y=\{(-,b-a)\}$ and
\[
  (x\op y)\op z
  = \{(-,b-a)\}\op(\Lambda,c)
  = (-,b-a)\op(\Lambda,c)
  = \{(\Lambda,(b-a)+c)\}
\]
again by clause (4).  Thus
\[
  (x\op y)\op z
  = \{(\Lambda,c+b-a)\}.
\]

In all three cases, we can summarize the result as
\[
  (x\op y)\op z
  = \{(\Lambda,c+|a-b|)\},
\]
since $|a-b|$ equals $a-b$, $0$, or $b-a$ according to whether $a>b$,
$a=b$, or $a<b$.

\medskip\noindent
\emph{Second bracketing.}
We now compute $y\op z=(-,b)\op(\Lambda,c)$.  By clause (4) of
Definition~\ref{def:H-hyperadd}, we have for all $b,c>0$
\[
  y\op z = (-,b)\op(\Lambda,c) = \{(\Lambda,b+c)\}.
\]
Thus
\[
  x\op(y\op z)
  = (+,a)\op\{(\Lambda,b+c)\}
  = (+,a)\op(\Lambda,b+c).
\]
Applying clause (4) once more, with $a>0$ and $b+c>0$, we obtain
\[
  x\op(y\op z)
  = \{(\Lambda,a+(b+c))\}
  = \{(\Lambda,a+b+c)\}.
\]

\medskip
This proves the two displayed formulas.  In particular, both
bracketings are singletons with a $\Lambda$ sign.  They coincide if and
only if the magnitudes are equal:
\[
  c+|a-b| = a+b+c
  \quad\Longleftrightarrow\quad
  |a-b| = a+b.
\]
But for $a,b>0$ we have $|a-b|\le a+b$, with equality only when one of
$a,b$ vanishes.  Since $a,b>0$ by assumption, $|a-b|<a+b$, so the
magnitudes cannot match.  Hence $(x\op y)\op z$ and $x\op(y\op z)$ are
distinct for all $a,b,c>0$, and associativity fails at $(x,y,z)$ in
every such case.
\end{proof}

As a consequence, we obtain a family of explicit nonassociative
triples, indexed by the positive reals.

\begin{corollary}[Systematic non--associativity for $(+,-,\Lambda)$]
\label{cor:assoc-failure-family}
For every choice of $a,b,c>0$, the hyperaddition $\op$ is not
associative at the triple
\[
  \bigl((+,a),(-,b),(\Lambda,c)\bigr).
\]
\end{corollary}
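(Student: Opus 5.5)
This corollary follows directly from Lemma~\ref{lem:assoc-bracketings}, so the plan is to invoke that lemma rather than recompute anything. Fix arbitrary $a,b,c>0$ and set $x=(+,a)$, $y=(-,b)$, $z=(\Lambda,c)$. By Lemma~\ref{lem:assoc-bracketings} both bracketings are singletons in the $\Lambda$--sector:
\[
  (x\op y)\op z=\{(\Lambda,c+|a-b|)\},\qquad x\op(y\op z)=\{(\Lambda,a+b+c)\}.
\]
Two singletons $\{(\Lambda,m)\}$ and $\{(\Lambda,m')\}$ are equal exactly when $m=m'$. Associativity at $(x,y,z)$, in the sense of Definition~\ref{def:assoc-H-triple}, is therefore equivalent to $c+|a-b|=a+b+c$, that is, to $|a-b|=a+b$.

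The only real step is to show that $|a-b|=a+b$ cannot hold for positive $a,b$. By symmetry we may assume $a\ge b$, so that $|a-b|=a-b$. Then $|a-b|=a+b$ would force $b=0$, which contradicts $b>0$. Equivalently, in the notation of Remark~\ref{rem:basic-C}, the difference of the magnitudes is $a+b-|a-b|=\Cfun(a,-b)=2\min(a,b)>0$. This shows that the two bracketings differ for every choice of $a,b,c>0$, and so $\op$ is not associative at any such triple.

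I do not expect any genuine obstacle. All of the case analysis (the three cases $a>b$, $a=b$, $a<b$ in clause~(3), followed by clause~(4)) has already been carried out in Lemma~\ref{lem:assoc-bracketings}. The only point that needs care is that the inequality $|a-b|<a+b$ is strict, and strictness uses the hypothesis that both $a$ and $b$ are strictly positive, as guaranteed by the definition of $\Hnum$ in Definition~\ref{def:H-universe}.
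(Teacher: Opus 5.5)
Your proposal is correct and matches the paper's argument: the paper also derives the corollary immediately from Lemma~\ref{lem:assoc-bracketings}, noting that the singletons $\{(\Lambda,c+|a-b|)\}$ and $\{(\Lambda,a+b+c)\}$ differ because $|a-b|<a+b$ for $a,b>0$. Your explicit check of this strict inequality repeats the final step already carried out inside the proof of that lemma.
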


\begin{proof}
Immediate from Lemma~\ref{lem:assoc-bracketings}, since the two
bracketings always yield different singletons.
\end{proof}

The difference between the magnitudes in
Lemma~\ref{lem:assoc-bracketings} can be expressed in a particularly
simple way.  It measures exactly the ``absolute value lost'' when
cancelling $a$ against $-b$ in the real field; this is the same
quantity that appears in the cancellation mass
$\Cfun(a,-b)$ introduced in Definition~\ref{def:basic-C}.

\begin{lemma}[Associativity defect and cancellation mass]
\label{lem:assoc-defect-C}
Let $x=(+,a)$, $y=(-,b)$, $z=(\Lambda,c)$ with $a,b,c>0$.  Denote by
$m_L$ and $m_R$ the magnitudes of the unique elements of
$(x\op y)\op z$ and $x\op(y\op z)$ respectively.  Then
\[
  m_L = c+|a-b|,
  \qquad
  m_R = a+b+c,
\]
and the difference
\[
  \AssocCurv(a,b,c)
  := m_R-m_L
  = a+b-|a-b|
\]
is independent of $c$ and equals the cancellation mass
$\Cfun(a,-b)$:
\[
  \AssocCurv(a,b,c)
  = a+b-|a-b|
  = \Cfun(a,-b).
\]
\end{lemma}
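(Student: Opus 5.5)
The plan is to read off both magnitudes directly from Lemma~\ref{lem:assoc-bracketings}. That lemma already shows, for $x=(+,a)$, $y=(-,b)$, $z=(\Lambda,c)$ with $a,b,c>0$, that $(x\op y)\op z=\{(\Lambda,c+|a-b|)\}$ and $x\op(y\op z)=\{(\Lambda,a+b+c)\}$. Both bracketings are therefore singletons, so $m_L$ and $m_R$ are well defined. Applying the magnitude map of Definition~\ref{def:H-universe} to these unique elements gives $m_L=c+|a-b|$ and $m_R=a+b+c$ immediately.

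Next I will compute the difference:
\[
  m_R-m_L=(a+b+c)-(c+|a-b|)=a+b-|a-b|.
\]
The variable $c$ cancels, so the quantity is independent of $c$. This independence is only a formal observation once the difference is written down.

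Finally, for the identification with the cancellation mass, I will unwind Definition~\ref{def:basic-C} with $r_1=a$ and $r_2=-b$:
\[
  \Cfun(a,-b)=|a|+|-b|-|a-b|=a+b-|a-b|,
\]
using $a,b>0$. This is exactly the computation recorded in Remark~\ref{rem:basic-C}, and it matches the expression for $\AssocCurv(a,b,c)$. If desired, I will add the case split $a\ge b$ or $a<b$, which rewrites the expression as $2\min(a,b)$ and so matches Theorem~\ref{thm:intro-defect}.

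There is no real obstacle here. The only point needing care is that ``the unique element'' really is unique, and this is guaranteed by the singleton conclusion of Lemma~\ref{lem:assoc-bracketings}. Everything else is direct substitution.
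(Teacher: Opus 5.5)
Your proposal is correct and follows essentially the same route as the paper. You read off $m_L$ and $m_R$ from Lemma~\ref{lem:assoc-bracketings}, subtract so that $c$ cancels, and unwind Definition~\ref{def:basic-C} at $(a,-b)$ to obtain $a+b-|a-b|$.
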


\begin{proof}
The expressions for $m_L$ and $m_R$ follow from
Lemma~\ref{lem:assoc-bracketings}.  Subtracting,
\[
  m_R-m_L
  = (a+b+c)-(c+|a-b|)
  = a+b-|a-b|.
\]
This quantity is independent of $c$ and, by
Definition~\ref{def:basic-C}, coincides with
\[
  \Cfun(a,-b)
  = |a|+|-b|-|a+(-b)|
  = a+b-|a-b|.
\]
This proves the claim.
\end{proof}

Using the elementary identity
$a+b-|a-b| = 2\min(a,b)$ for $a,b\ge0$, we obtain the following
more symmetric description of the associativity defect.

\begin{proposition}[Explicit defect formula for $(+,-,\Lambda)$]
\label{prop:assoc-defect-2min}
For $a,b,c>0$, let $x=(+,a)$, $y=(-,b)$, $z=(\Lambda,c)$ and let
$m_L,m_R$ be as in Lemma~\ref{lem:assoc-defect-C}.  Then
\[
  \AssocCurv(a,b,c)
  := m_R-m_L
  = 2\min(a,b).
\]
Equivalently,
\[
  a+b-|a-b| = 2\min(a,b)
  \qquad\text{for all }a,b\ge0.
\]
\end{proposition}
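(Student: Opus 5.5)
The plan is to reduce the statement to Lemma~\ref{lem:assoc-defect-C} and then check an elementary identity for real numbers. Lemma~\ref{lem:assoc-defect-C} already gives $m_L=c+|a-b|$ and $m_R=a+b+c$. Hence
\[
  \AssocCurv(a,b,c)=m_R-m_L=a+b-|a-b|,
\]
which does not depend on $c$. So the first displayed formula follows once we prove the identity
\[
  a+b-|a-b|=2\min(a,b)\qquad\text{for all } a,b\ge0 ,
\]
which is also exactly the ``equivalently'' clause of the statement.

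To prove the identity, I will split into the two cases given by the total order on $\R$, with no appeal to the hyperstructure. If $a\ge b$, then $|a-b|=a-b$ and $\min(a,b)=b$. The left side becomes $a+b-(a-b)=2b=2\min(a,b)$. If $a<b$, then $|a-b|=b-a$ and $\min(a,b)=a$. The left side becomes $a+b-(b-a)=2a=2\min(a,b)$. These two cases cover all $a,b\ge0$. In fact the argument works for all real $a,b$, so the nonnegativity hypothesis is not even needed for this step.

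Finally, I specialize to $a,b>0$ and substitute into the expression for $\AssocCurv$ from Lemma~\ref{lem:assoc-defect-C}. This gives $\AssocCurv(a,b,c)=2\min(a,b)$.

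There is no real obstacle. The only point needing care is logical: the two formulations are equivalent only in the sense that, given Lemma~\ref{lem:assoc-defect-C}, each follows from the same elementary identity. The proof should be organized to reflect that.
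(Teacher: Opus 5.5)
Your proposal is correct and follows essentially the same route as the paper: you split into the cases $a\ge b$ and $a<b$ to get $a+b-|a-b|=2\min(a,b)$, then combine this with Lemma~\ref{lem:assoc-defect-C}. Your extra observation that the identity holds for all real $a,b$ is also correct, though the paper does not need it.
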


\begin{proof}
The equality $a+b-|a-b|=2\min(a,b)$ for $a,b\ge0$ is elementary.  If
$a\ge b$, then $|a-b|=a-b$ and
\[
  a+b-|a-b|
  = a+b-(a-b)
  = 2b
  = 2\min(a,b).
\]
If $a<b$, then $|a-b|=b-a$ and
\[
  a+b-|a-b|
  = a+b-(b-a)
  = 2a
  = 2\min(a,b).
\]
Hence the identity holds for all $a,b\ge0$.

Combining this with Lemma~\ref{lem:assoc-defect-C}, we find
\[
  \AssocCurv(a,b,c)
  = a+b-|a-b|
  = 2\min(a,b),
\]
as claimed.
\end{proof}

We can now interpret the associativity defect in the ordered
$(+,-,\Lambda)$ case as a direct lift of the real cancellation
phenomenon.

\begin{theorem}[Proof of Theorem~\ref{thm:intro-defect}]
\label{thm:assoc-defect-proof}
Let $a,b,c>0$ and consider
\[
  x=(+,a),\quad y=(-,b),\quad z=(\Lambda,c)\in\Hnum.
\]
Then:
\begin{enumerate}
  \item Both $(x\op y)\op z$ and $x\op(y\op z)$ are singletons in the
        $\Lambda$--sector:
  \[
    (x\op y)\op z = \{(\Lambda,m_L)\},
    \qquad
    x\op(y\op z) = \{(\Lambda,m_R)\},
  \]
  with
  \[
    m_L = c+|a-b|,
    \qquad
    m_R = a+b+c.
  \]
  \item The associativity defect
  \[
    \AssocCurv(a,b,c)
    := m_R-m_L
  \]
  satisfies
  \[
    \AssocCurv(a,b,c) = a+b-|a-b| = 2\min(a,b) = \Cfun(a,-b),
  \]
  where $\Cfun$ is the cancellation mass from
  Definition~\ref{def:basic-C}.
  \item In particular, $\op$ is never associative at $(x,y,z)$ for
        $a,b,c>0$, and the defect depends only on $(a,b)$, not on $c$.
\end{enumerate}
\end{theorem}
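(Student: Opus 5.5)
The statement is essentially an assembly of results already established in this subsection, so the plan is to collect them in order rather than to compute anything new.

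For part (1), I will invoke Lemma~\ref{lem:assoc-bracketings} directly. Its proof splits the first bracketing into the three cases $a>b$, $a=b$, $a<b$ of clause (3) of Definition~\ref{def:H-hyperadd}. In each case the intermediate sum $x\op y$ is a single element of $\Rc$ (either $(+,a-b)$, $0$, or $(-,b-a)$). Adding $z$ via clause (4), or clause (1) in the zero case, then yields the singleton $\{(\Lambda,c+|a-b|)\}$.

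The second bracketing is uniform. By clause (4), $y\op z=\{(\Lambda,b+c)\}$, and a second application of clause (4) gives $\{(\Lambda,a+b+c)\}$. Both outcomes are singletons in $H_\Lambda$, which identifies $m_L=c+|a-b|$ and $m_R=a+b+c$.

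For part (2), subtraction gives $m_R-m_L=a+b-|a-b|$. Lemma~\ref{lem:assoc-defect-C} identifies this with $\Cfun(a,-b)$ by unwinding Definition~\ref{def:basic-C}. Proposition~\ref{prop:assoc-defect-2min} supplies the identity $a+b-|a-b|=2\min(a,b)$, by splitting into the cases $a\ge b$ and $a<b$. Chaining these three equalities gives the displayed formula.

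For part (3), independence of $c$ is visible from the formula, since $c$ cancels in the subtraction. Non-associativity follows because $2\min(a,b)>0$ whenever $a,b>0$. The two bracketings are therefore distinct singletons, which is Corollary~\ref{cor:assoc-failure-family}.

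There is no real obstacle here. The only point requiring care is the boundary case $a=b$. There the intermediate sum is $0$ rather than a signed element, so clause (1) must be used instead of clause (4); the uniform formula $c+|a-b|$ still covers this case because $|a-b|=0$.
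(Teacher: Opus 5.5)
Your proposal is correct and matches the paper's proof. Part (1) comes from Lemma~\ref{lem:assoc-bracketings}, part (2) from Lemma~\ref{lem:assoc-defect-C} combined with Proposition~\ref{prop:assoc-defect-2min}, and part (3) from the strict positivity of $2\min(a,b)$. Your remark that the case $a=b$ uses clause (1) rather than clause (4) is accurate, and slightly more precise than the paper's citation of ``clauses (1) and (4)'' at that step.
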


\begin{proof}
Item (1) is precisely Lemma~\ref{lem:assoc-bracketings}.  Item (2)
is Lemma~\ref{lem:assoc-defect-C} together with
Proposition~\ref{prop:assoc-defect-2min}.  For (3), note that
$\AssocCurv(a,b,c)>0$ for all $a,b>0$, so the two singleton bracketings
never coincide; hence associativity fails at $(x,y,z)$ for all
$a,b,c>0$.  The formula $\AssocCurv(a,b,c)=\Cfun(a,-b)$ shows explicitly
that $\AssocCurv$ does not depend on $c$.
\end{proof}

\begin{remark}[Order sensitivity and other sign patterns]
\label{rem:order-sensitivity}
The analysis above highlights two important features.

First, the failure of associativity for the pattern $(+,-,\Lambda)$ is
\emph{order--sensitive}.  If we permute the triple to, say,
$(+, \Lambda, -)$ or $(\Lambda,+,-)$, the resulting bracketings behave
differently; in some such permutations, associativity actually holds
for all magnitudes $a,b,c>0$.  Thus it is not enough to know the
\emph{multiset} of signs $\{+,-,\Lambda\}$; the ordered triple of signs
matters.

Second, the quantitative defect is entirely controlled by the real
cancellation between $a$ and $-b$.  The third parameter $c$ simply
shifts both magnitudes $m_L$ and $m_R$ by the same amount.  This
observation will be a key motivation for the ambient cancellation
monoid $(\Kamb,\oplus)$ constructed in
Section~\ref{sec:ambient}, where the quantity
$\Cfun(a,-b)=a+b-|a-b|$ appears as a second coordinate recording lost
absolute value.
\end{remark}

\subsection{Associativity defect for $(+,a),(-,b),(\Lambda,c)$}
\label{subsec:assoc-defect-quantitative}

In this subsection we refine the analysis of ordered $(+,-,\Lambda)$–triples
from Section~\ref{subsec:assoc-plus-minus-lambda} by isolating and
studying the \emph{associativity defect} as a quantitative function of
the magnitudes $a,b,c>0$.

Throughout we fix
\[
  x=(+,a),\qquad y=(-,b),\qquad z=(\Lambda,c)\in\Hnum
  \qquad(a,b,c>0),
\]
and recall from Lemma~\ref{lem:assoc-bracketings} that both bracketings
are singletons in the $\Lambda$–sector:
\begin{align*}
  (x\op y)\op z
  &= \{(\Lambda,\,c+|a-b|)\},\\[2pt]
  x\op(y\op z)
  &= \{(\Lambda,\,a+b+c)\}.
\end{align*}
We now package this discrepancy into an explicit defect function.

\begin{definition}[Associativity defect function for $(+,-,\Lambda)$]
\label{def:defect-function}
For $a,b,c>0$ we define the \emph{left} and \emph{right magnitudes} by
\[
  m_L(a,b,c)
  := c+|a-b|,\qquad
  m_R(a,b,c)
  := a+b+c.
\]
The \emph{associativity defect} at the triple
$\bigl((+,a),(-,b),(\Lambda,c)\bigr)$ is the nonnegative real number
\[
  \AssocCurv(a,b,c)
  := m_R(a,b,c)-m_L(a,b,c)
  = a+b-|a-b|.
\]
\end{definition}

By Lemma~\ref{lem:assoc-bracketings}, $m_L$ and $m_R$ are exactly the
magnitudes of the unique elements in $(x\op y)\op z$ and
$x\op(y\op z)$, respectively, so $\AssocCurv(a,b,c)$ measures how far $\op$
fails to be associative at this triple.  The following lemma collects
its basic properties.

\begin{lemma}[Elementary properties of $\AssocCurv$]
\label{lem:defect-basic-properties}
For all $a,b,c>0$ the associativity defect $\AssocCurv(a,b,c)$ from
Definition~\ref{def:defect-function} satisfies:
\begin{enumerate}
  \item \textbf{Nonnegativity and independence of $c$:}
  \[
    \AssocCurv(a,b,c) \ge 0
    \quad\text{and}\quad
    \AssocCurv(a,b,c) = \AssocCurv(a,b,c')
    \ \text{for all }c,c'>0.
  \]
  \item \textbf{Explicit formula:}
  \[
    \AssocCurv(a,b,c) = a+b-|a-b| = 2\min(a,b).
  \]
  \item \textbf{Symmetry:}
  \[
    \AssocCurv(a,b,c) = \AssocCurv(b,a,c).
  \]
  \item \textbf{Strict positivity:}
  \[
    \AssocCurv(a,b,c) > 0
    \quad\text{for all }a,b>0.
  \]
\end{enumerate}
\end{lemma}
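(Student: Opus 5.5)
The plan is to read everything off the closed form in Definition~\ref{def:defect-function}, $\AssocCurv(a,b,c)=a+b-|a-b|$, which contains no $c$ at all. All four items are then elementary facts about the real function $(a,b)\mapsto a+b-|a-b|$ on $\Rp^2$. No hyperaddition computation is needed beyond what Lemma~\ref{lem:assoc-bracketings} already supplies through the definition.

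I would prove item (2) first, since the others follow from it. The equality $\AssocCurv(a,b,c)=a+b-|a-b|$ is the definition itself. The identity $a+b-|a-b|=2\min(a,b)$ is exactly Proposition~\ref{prop:assoc-defect-2min}, which splits into the cases $a\ge b$ and $a<b$; I would simply invoke it. Item (1) then follows at once. Independence of $c$ holds because the formula does not involve $c$, so $\AssocCurv(a,b,c)=\AssocCurv(a,b,c')$. Nonnegativity holds either because $2\min(a,b)\ge0$, or because $\AssocCurv(a,b,c)=\Cfun(a,-b)\ge0$ by Lemma~\ref{lem:assoc-defect-C} and Lemma~\ref{lem:basic-C}(1).

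For item (3), I would use symmetry of the closed form. Both $a+b$ and $|a-b|=|b-a|$ are symmetric in $(a,b)$, and so is $\min(a,b)$. Hence $\AssocCurv(a,b,c)=\AssocCurv(b,a,c)$. One point deserves a sentence: $\AssocCurv(b,a,c)$ is understood as the value of the defect function at the triple $((+,b),(-,a),(\Lambda,c))$, not as a swap of signs. Since the function is defined by a formula in $(a,b,c)$, this causes no difficulty.

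For item (4), I would use $2\min(a,b)>0$ whenever $a,b>0$. This is also consistent with the strict inequality $|a-b|<a+b$ already noted in the proof of Lemma~\ref{lem:assoc-bracketings}. There is no real obstacle here. The only care needed is to keep the domain $a,b>0$ in view, since strict positivity would fail at the boundary $a=0$ or $b=0$ if the formula were extended there.
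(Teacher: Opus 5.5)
Your proposal is correct and follows essentially the same route as the paper: all four items are read off the closed form $a+b-|a-b|$. Item (2) uses the case split $a\ge b$ versus $a<b$, which you import from Proposition~\ref{prop:assoc-defect-2min} rather than redoing, and the only other cosmetic difference is that the paper gets nonnegativity directly from $|a-b|\le a+b$ rather than from $2\min(a,b)\ge0$ or $\Cfun(a,-b)\ge0$.
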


\begin{proof}
(1) Nonnegativity follows from $|a-b|\le a+b$ for $a,b\ge0$, so
$a+b-|a-b|\ge0$.  The expression $a+b-|a-b|$ does not involve $c$, so
$\AssocCurv(a,b,c)$ is independent of $c$.

(2) For $a,b\ge0$ we have the elementary identity
$a+b-|a-b|=2\min(a,b)$.  Indeed, if $a\ge b$, then $|a-b|=a-b$ and
\[
  a+b-|a-b|
  = a+b-(a-b)
  = 2b
  = 2\min(a,b),
\]
while if $a<b$, then $|a-b|=b-a$ and
\[
  a+b-|a-b|
  = a+b-(b-a)
  = 2a
  = 2\min(a,b).
\]
Applying this with $a,b>0$ gives the claimed formula for $\AssocCurv$.

(3) Symmetry is immediate from either expression:
\[
  a+b-|a-b| = b+a-|b-a|,
\]
and $\min(a,b)=\min(b,a)$.

(4) For $a,b>0$ one has $\min(a,b)>0$, hence
$\AssocCurv(a,b,c)=2\min(a,b)>0$.  This reflects the fact that associativity
never holds at the triple $((+,a),(-,b),(\Lambda,c))$ when $a,b>0$.
\end{proof}

The relationship with the cancellation mass
$\Cfun:\R\times\R\to\R_{\ge0}$ of Definition~\ref{def:basic-C} is
immediate.

\begin{proposition}[Defect and cancellation mass]
\label{prop:defect-c-mass}
For all $a,b,c>0$ one has
\[
  \AssocCurv(a,b,c) = \Cfun(a,-b),
\]
where $\Cfun(r_1,r_2) = |r_1|+|r_2|-|r_1+r_2|$.
\end{proposition}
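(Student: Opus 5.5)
The plan is to unwind both sides directly from their definitions and compare the resulting real expressions. The left side $\AssocCurv(a,b,c)$ is given by Definition~\ref{def:defect-function} as $m_R(a,b,c)-m_L(a,b,c)$. Substituting $m_R=a+b+c$ and $m_L=c+|a-b|$, the $c$ terms cancel and we are left with $\AssocCurv(a,b,c)=a+b-|a-b|$. This is exactly item~(2) of Lemma~\ref{lem:defect-basic-properties}, in the form before it is rewritten as $2\min(a,b)$.

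For the right side, we evaluate the cancellation mass of Definition~\ref{def:basic-C} at $r_1=a$ and $r_2=-b$. This gives
\[
  \Cfun(a,-b)=|a|+|-b|-|a+(-b)|.
\]
Since $a,b>0$, Lemma~\ref{lem:basic-abs} (or simply the definition of absolute value) yields $|a|=a$ and $|-b|=b$. Writing $a+(-b)=a-b$ turns the expression into $a+b-|a-b|$, which is the same as the left side. This is the computation already sketched in Remark~\ref{rem:basic-C} and carried out inside Lemma~\ref{lem:assoc-defect-C}. It would also be legitimate simply to cite that lemma, since its statement contains the identity verbatim; I would mention this as an alternative.

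No case distinction on $a$ versus $b$ is needed, because both sides contain the same term $|a-b|$. I do not expect any real obstacle. The only point to check is that the hypotheses $a,b>0$ are what allow the absolute values $|a|$ and $|-b|$ to be removed. The constraint $c>0$ matters only for $m_L$ and $m_R$ to be well defined via Lemma~\ref{lem:assoc-bracketings}.
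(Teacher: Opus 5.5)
Your proposal is correct and follows the paper's proof essentially verbatim: the paper also unwinds Definition~\ref{def:defect-function} to get $a+b-|a-b|$ and identifies it with $|a|+|-b|-|a+(-b)|=\Cfun(a,-b)$ using $a,b>0$, with no case split. Your extra remarks (citing Lemma~\ref{lem:assoc-defect-C} as an alternative, the role of $c>0$) are harmless.
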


\begin{proof}
By Definition~\ref{def:defect-function} and
Definition~\ref{def:basic-C},
\[
  \AssocCurv(a,b,c)
  = a+b-|a-b|
  = |a|+|-b|-|a+(-b)|
  = \Cfun(a,-b),
\]
since $a>0$ and $b>0$.
\end{proof}

In particular, the associativity defect for the triple
$((+,a),(-,b),(\Lambda,c))$ is \emph{exactly} the amount of absolute
value lost when adding $a$ and $-b$ as real numbers.  This is one way
in which $\Hnum$ ``remembers'' the real cancellation phenomenon.

\medskip

We next record some regularity properties of the defect function,
which will be useful for geometric interpretations and for comparing
with the ambient cancellation monoid in Section~\ref{sec:ambient}.

\begin{lemma}[Continuity and piecewise linearity]
\label{lem:defect-regularity}
Consider the function
\[
  \AssocCurv : (0,\infty)^3 \longrightarrow (0,\infty),
  \qquad
  (a,b,c)\mapsto a+b-|a-b|.
\]
Then:
\begin{enumerate}
  \item $\AssocCurv$ extends continuously to a function on
        $[0,\infty)^3$, given by the same formula.
  \item As a function of $(a,b)$ (with $c$ fixed), $\AssocCurv$ is
        piecewise linear with a single ``break line'' along $a=b$.
        More precisely,
  \[
    \AssocCurv(a,b,c)
    =
    \begin{cases}
      2b, & a\ge b,\\[2pt]
      2a, & a<b.
    \end{cases}
  \]
  \item For each fixed $b\ge0$ and $c\ge0$, the map
        $a\mapsto\AssocCurv(a,b,c)$ is continuous, nondecreasing, and
        piecewise linear, and similarly with the roles of $a$ and $b$
        reversed.
\end{enumerate}
\end{lemma}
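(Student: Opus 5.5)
The plan is to use the explicit formula $\AssocCurv(a,b,c)=a+b-|a-b|$ from Definition~\ref{def:defect-function}, which does not involve $c$. Every claim then reduces to a statement about the function $(a,b)\mapsto a+b-|a-b|$ on $[0,\infty)^2$, regarded as a function of three variables that is constant in $c$.

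For (1), I note that $|\cdot|$, addition and subtraction are continuous on $\R$. So the formula $a+b-|a-b|$ defines a continuous function on all of $\R^3$, and in particular on $[0,\infty)^3$. It agrees with $\AssocCurv$ on $(0,\infty)^3$, so it is the continuous extension. Because $(0,\infty)^3$ is dense in $[0,\infty)^3$, this extension is unique. There is one cosmetic point: the stated codomain $(0,\infty)$ is correct on the open domain by Lemma~\ref{lem:defect-basic-properties}(4), whereas the extension takes values in $[0,\infty)$, vanishing exactly where $\min(a,b)=0$. I would record this in passing.

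For (2), I would invoke Lemma~\ref{lem:defect-basic-properties}(2), which gives $\AssocCurv=2\min(a,b)$. Splitting into the cases $a\ge b$ and $a<b$ yields $2b$ and $2a$ respectively. Each piece is linear on its closed half-plane, and the two pieces agree on the line $a=b$. Hence the function is piecewise linear with the single break line $a=b$. To see that it is a genuine break, I would observe that the gradient jumps from $(0,2)$ to $(2,0)$ across the line.

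For (3), fix $b,c\ge0$. The map $a\mapsto 2\min(a,b)$ equals $2a$ on $[0,b]$ and the constant $2b$ on $[b,\infty)$. It is therefore continuous by (1) and piecewise linear with one breakpoint at $a=b$. It is nondecreasing because both pieces are nondecreasing and they glue continuously at $a=b$. The statement with $a$ and $b$ exchanged follows from the symmetry in Lemma~\ref{lem:defect-basic-properties}(3).

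None of these steps presents a real obstacle. The only point needing care is that the boundary extension in (1) can take the value $0$, which conflicts slightly with the codomain $(0,\infty)$ as written.
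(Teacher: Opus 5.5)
Your proposal is correct and follows the paper's proof essentially step for step. For (1) you use continuity of the formula as a composition of continuous operations. For (2) you split into the cases $a\ge b$ and $a<b$, giving $2b$ and $2a$. For (3) you restrict to a line, with pieces $2a$ on $[0,b]$ and $2b$ on $[b,\infty)$, and get the $a\leftrightarrow b$ version by symmetry.

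Your side remark is accurate. The extension vanishes exactly when $\min(a,b)=0$, so its codomain is $[0,\infty)$ rather than $(0,\infty)$; this is a small clarification the paper's wording leaves implicit.
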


\begin{proof}
(1) The formula $\AssocCurv(a,b,c)=a+b-|a-b|$ makes sense for
$a,b,c\ge0$ and is continuous as a composition of continuous
operations on $\R^3$ (addition, absolute value, and subtraction).
Thus $\AssocCurv$ extends continuously to $[0,\infty)^3$.

(2) For $a,b\ge0$ we have already seen that
\[
  \AssocCurv(a,b,c) = a+b-|a-b|
  =
  \begin{cases}
    2b, & a\ge b,\\
    2a, & a<b,
  \end{cases}
\]
which is linear in $(a,b)$ on each of the two regions $\{a\ge b\}$ and
$\{a<b\}$ separated by the line $a=b$.

(3) Fix $b,c\ge0$.  On $[0,\infty)$, the function
$a\mapsto\AssocCurv(a,b,c)$ coincides with $2b$ on $[b,\infty)$ and with
$2a$ on $[0,b)$, so it is continuous, nondecreasing, and piecewise
linear with a single break point at $a=b$.  The statement with $a$ and
$b$ interchanged is identical.
\end{proof}

\begin{lemma}[Homogeneity and scaling behaviour]
\label{lem:defect-homogeneity}
Let $\lambda\ge0$ and $a,b,c>0$.  Then
\[
  \AssocCurv(\lambda a,\lambda b,\lambda c)
  = \lambda\,\AssocCurv(a,b,c).
\]
\end{lemma}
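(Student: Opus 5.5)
The plan is to use the explicit formula from Lemma~\ref{lem:defect-basic-properties}(2), $\AssocCurv(a,b,c)=a+b-|a-b|=2\min(a,b)$, together with positive homogeneity of the absolute value from Lemma~\ref{lem:basic-abs}(2). There is one subtlety to settle first. $\AssocCurv$ was defined in Definition~\ref{def:defect-function} only for positive arguments. For $\lambda=0$ the left-hand side must therefore be read through the continuous extension to $[0,\infty)^3$ given by Lemma~\ref{lem:defect-regularity}(1).

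\textbf{Case $\lambda>0$.} Here $\lambda a,\lambda b,\lambda c>0$, so the definition applies directly, and we compute
\[
  \AssocCurv(\lambda a,\lambda b,\lambda c)
  = \lambda a+\lambda b-|\lambda a-\lambda b|
  = \lambda a+\lambda b-|\lambda|\,|a-b|
  = \lambda\bigl(a+b-|a-b|\bigr).
\]
This uses Lemma~\ref{lem:basic-abs}(2) and $|\lambda|=\lambda$. Alternatively, one can note that $\min(\lambda a,\lambda b)=\lambda\min(a,b)$ for $\lambda\ge0$.

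A second, structural route is also available. By Theorem~\ref{thm:scalar-Rge0}, the scalar $\lambda$ acts as an endomorphism of $(\Hnum,\op)$ that preserves signs and scales magnitudes (Lemma~\ref{lem:scalar-explicit}). Hence it maps both bracketings of the triple $((+,a),(-,b),(\Lambda,c))$ to the corresponding bracketings of the scaled triple, which gives $m_L$ and $m_R$ scaled by $\lambda$. I would mention this only as a remark.

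\textbf{Case $\lambda=0$.} The extended formula gives $0+0-|0|=0=0\cdot\AssocCurv(a,b,c)$.

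The only real obstacle is this interpretive point at $\lambda=0$. I would handle it with a one-line remark invoking Lemma~\ref{lem:defect-regularity}(1); everything else is a routine computation.
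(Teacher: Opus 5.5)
Your proposal is correct and matches the paper's proof, which is the same one-line computation $\lambda a+\lambda b-|\lambda a-\lambda b|=\lambda(a+b-|a-b|)$, using $|\lambda a-\lambda b|=\lambda|a-b|$ for $\lambda\ge0$. Your separate treatment of $\lambda=0$ through the continuous extension in Lemma~\ref{lem:defect-regularity}(1) is a small but genuine improvement: the paper applies the formula at $\lambda=0$ without comment, although Definition~\ref{def:defect-function} only covers positive arguments.
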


\begin{proof}
Using Definition~\ref{def:defect-function} and the fact that
$|\lambda a-\lambda b|=\lambda|a-b|$ for $\lambda\ge0$, we obtain
\[
  \AssocCurv(\lambda a,\lambda b,\lambda c)
  = \lambda a + \lambda b - |\lambda a - \lambda b|
  = \lambda(a+b-|a-b|)
  = \lambda\,\AssocCurv(a,b,c).
\]
\end{proof}

Thus the associativity defect behaves linearly under simultaneous
rescaling of all three magnitudes.  This is consistent with the
$\R_{\ge0}$–hypersemimodule structure on $\Hnum$ described in
Corollary~\ref{cor:H-Rge0-hypersemimodule}: rescaling by $\lambda>0$
simply multiplies all magnitudes by $\lambda$, and hence multiplies
the defect by $\lambda$.

\medskip

We can summarize the situation as follows.

\begin{theorem}[Quantitative associativity defect for $(+,a),(-,b),(\Lambda,c)$]
\label{thm:defect-summary}
For each triple $a,b,c>0$, let
\[
  x=(+,a),\quad y=(-,b),\quad z=(\Lambda,c)\in\Hnum.
\]
Then:
\begin{enumerate}
  \item Both bracketings of the hyperaddition at $(x,y,z)$ are
        singletons in the $\Lambda$–sector:
  \[
    (x\op y)\op z = \{(\Lambda,m_L)\},\qquad
    x\op(y\op z) = \{(\Lambda,m_R)\},
  \]
  with
  \[
    m_L = c+|a-b|,\qquad
    m_R = a+b+c.
  \]
  \item The associativity defect
  \[
    \AssocCurv(a,b,c) := m_R-m_L
  \]
  is independent of $c$ and satisfies
  \[
    \AssocCurv(a,b,c) = a+b-|a-b| = 2\min(a,b) = \Cfun(a,-b).
  \]
  \item For fixed $b,c>0$ (resp.\ $a,c>0$), the function
        $a\mapsto\AssocCurv(a,b,c)$ (resp.\ $b\mapsto\AssocCurv(a,b,c)$) is
        continuous, nondecreasing, and piecewise linear with a single
        break point at $a=b$ (resp.\ $b=a$).
  \item For every $\lambda\ge0$,
  \[
    \AssocCurv(\lambda a,\lambda b,\lambda c)
    = \lambda\,\AssocCurv(a,b,c).
  \]
  \item In particular, $\AssocCurv(a,b,c)>0$ for all $a,b>0$, so the
        hyperaddition $\op$ is never associative at the triple
        $\bigl((+,a),(-,b),(\Lambda,c)\bigr)$ when $a,b,c>0$.
\end{enumerate}
\end{theorem}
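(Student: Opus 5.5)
The plan is to assemble Theorem~\ref{thm:defect-summary} from results already established in this section, since each item is a repackaging of an earlier lemma. No new computation with $\op$ is needed beyond the bracketing analysis of Lemma~\ref{lem:assoc-bracketings}.

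\textbf{Item (1).} I would cite Lemma~\ref{lem:assoc-bracketings} directly. The only case split is the trichotomy $a>b$, $a=b$, $a<b$ in clause~(3) of Definition~\ref{def:H-hyperadd} for $x\op y$. In every case clause~(4), or clause~(1) when $x\op y=\{0\}$, absorbs the real summand into the $\Lambda$-sector, which gives $m_L=c+|a-b|$. The right bracketing passes through $(\Lambda,b+c)$ via clause~(4) twice, which gives $m_R=a+b+c$.

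\textbf{Item (2).} I would combine Lemma~\ref{lem:assoc-defect-C}, which supplies the difference $a+b-|a-b|$ and its identification with $\Cfun(a,-b)$, with Proposition~\ref{prop:assoc-defect-2min}, which supplies the identity $a+b-|a-b|=2\min(a,b)$. Independence of $c$ holds because $c$ cancels in $m_R-m_L$.

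\textbf{Items (3)--(5).} Item~(3) is Lemma~\ref{lem:defect-regularity}(3). Restricting to $b,c>0$ is harmless there, since that lemma was proved on the closed orthant. Item~(4) is Lemma~\ref{lem:defect-homogeneity}. The one point to check is $\lambda=0$: the formula $a+b-|a-b|$ makes sense at the origin by the continuous extension of Lemma~\ref{lem:defect-regularity}(1), and both sides vanish, so I would state this reading explicitly. Item~(5) follows from Lemma~\ref{lem:defect-basic-properties}(4), since $2\min(a,b)>0$. Then $m_L\neq m_R$, so the two singleton bracketings from item~(1) are distinct and associativity fails.

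The only real obstacle is this bookkeeping at the boundary $\lambda=0$ in item~(4). There the triple $(\lambda a,\lambda b,\lambda c)$ leaves $\Rp^3$, so $\AssocCurv$ must be read through its formula rather than through the bracketings. Everything else is direct citation.
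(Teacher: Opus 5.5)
Your proposal is correct and follows the paper's proof, which likewise obtains items (1)--(5) by direct citation of Lemma~\ref{lem:assoc-bracketings}, the defect identities, Lemma~\ref{lem:defect-regularity}, Lemma~\ref{lem:defect-homogeneity}, and positivity of $2\min(a,b)$. For item (2) you cite Lemma~\ref{lem:assoc-defect-C} with Proposition~\ref{prop:assoc-defect-2min} where the paper cites Proposition~\ref{prop:defect-c-mass} with Lemma~\ref{lem:defect-basic-properties}, which is equivalent; your remark that at $\lambda=0$ the defect must be read through the continuous extension of Lemma~\ref{lem:defect-regularity}(1) addresses a boundary case the paper leaves implicit.
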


\begin{proof}
(1) is Lemma~\ref{lem:assoc-bracketings}.  (2) is
Proposition~\ref{prop:defect-c-mass} together with
Lemma~\ref{lem:defect-basic-properties}.  (3) and (4) are
Lemmas~\ref{lem:defect-regularity} and~\ref{lem:defect-homogeneity},
respectively.  (5) follows from $\AssocCurv(a,b,c)=2\min(a,b)$ and $\min(a,b)>0$ for
$a,b>0$.
\end{proof}

\begin{remark}[Interpretation and outlook]
\label{rem:defect-outlook}
The theorem shows that in the ordered $(+,-,\Lambda)$ case the failure
of associativity is completely controlled by a single scalar quantity,
$\AssocCurv(a,b,c)$, which is:
\begin{itemize}[leftmargin=2em]
  \item symmetric in $(a,b)$,
  \item homogeneous of degree one,
  \item equal to twice the smaller of $a$ and $b$,
  \item and equal to the cancellation mass $\Cfun(a,-b)$ in the real
        field.
\end{itemize}
Thus $(\Hnum,\op)$ does not forget the real cancellation $a+(-b)$; it
records the lost absolute value as a systematic displacement between
the two bracketings of $(+,a),(-,b),(\Lambda,c)$.  In
Section~\ref{sec:ambient} we will see that this picture is closely
related to the ambient cancellation monoid $(\Kamb,\oplus)$, where
$\Cfun(a,-b)$ appears as an explicit second coordinate tracking
cancellation.
\end{remark}


\section{Sign layer and ambient structure}
\label{sec:sign-envelope}

\subsection{The hypergroup structure on the sign set}
\label{subsec:sign-hypergroup}

In this section we construct a canonical commutative hypergroup on the
four--element sign set
\[
  \Sset = \{0,+,-,\Lambda\},
\]
extending the additive hypergroup of the usual sign hyperfield
(Section~\ref{subsec:hyperfields}).  Intuitively, the new symbol
$\Lambda$ represents a ``cancellation--absorbing'' sign: it behaves
multiplicatively as an idempotent (Section~\ref{subsec:multiplicative})
and additively as a sign that can absorb contributions from both $+$
and $-$ and, when combined with itself, may produce all three classical
signs as well as $0$.

\medskip

Recall from Definition~\ref{def:sign-hyperfield} the additive
hyperoperation $\oplus_{\mathrm{sign}}$ on
$S_{\mathrm{sign}}:=\{0,+,-\}\subset\Sset$:
\begin{align*}
  0\oplus_{\mathrm{sign}} x
  &= x\oplus_{\mathrm{sign}} 0
   = \{x\}
   &&\text{for all }x\in S_{\mathrm{sign}},\\
  +\oplus_{\mathrm{sign}} +
  &= \{+\},\\
  -\oplus_{\mathrm{sign}} -
  &= \{-\},\\
  +\oplus_{\mathrm{sign}} -
  &= -\oplus_{\mathrm{sign}} +
   = \{+,0,-\}.
\end{align*}

We now extend this to $\Sset$ by specifying how $\Lambda$ interacts
with the classical signs.

\begin{definition}[Sign hyperoperation on $\Sset$]
\label{def:S-hyperoperation}
Define a hyperoperation
\[
  \sop : \Sset\times\Sset
  \longrightarrow \mathcal{P}(\Sset)\setminus\{\emptyset\}
\]
by the following rules.

\smallskip\noindent
\textbf{(1) Neutral element.} For all $\sigma\in\Sset$,
\[
  0\sop\sigma
  = \sigma\sop 0
  := \{\sigma\}.
\]

\smallskip\noindent
\textbf{(2) Classical part.} For $x,y\in\{+,-\}$,
\[
  x\sop y := x\oplus_{\mathrm{sign}}y,
\]
i.e.
\[
  +\sop + = \{+\},\quad
  -\sop - = \{-\},\quad
  +\sop - = -\sop + = \{+,0,-\}.
\]

\smallskip\noindent
\textbf{(3) Interaction with $\Lambda$.} For $\sigma\in\{+,-\}$,
\begin{align*}
  \sigma\sop\Lambda
  &= \Lambda\sop\sigma
   := \{\Lambda\},\\[2pt]
  \Lambda\sop 0
  &= 0\sop\Lambda
   := \{\Lambda\}.
\end{align*}

\smallskip\noindent
\textbf{(4) $\Lambda$--$\Lambda$ sum.}
\[
  \Lambda\sop\Lambda
  := \{0,+,-,\Lambda\}.
\]

\smallskip\noindent
In all cases, $\sop$ is understood to be symmetric:
$\sigma_1\sop\sigma_2=\sigma_2\sop\sigma_1$.
\end{definition}

\begin{table}[ht]
\centering
\caption{Cayley table for the sign hyperoperation $\sop$ on $\Sset=\{0,+,-,\Lambda\}$.}
\label{tab:sign-hyperop}
\begin{tabular}{c|cccc}
$\sop$ & $0$ & $+$ & $-$ & $\Lambda$ \\ \hline
$0$ & $\{0\}$ & $\{+\}$ & $\{-\}$ & $\{\Lambda\}$ \\
$+$ & $\{+\}$ & $\{+\}$ & $\{+,0,-\}$ & $\{\Lambda\}$ \\
$-$ & $\{-\}$ & $\{+,0,-\}$ & $\{-\}$ & $\{\Lambda\}$ \\
$\Lambda$ & $\{\Lambda\}$ & $\{\Lambda\}$ & $\{\Lambda\}$ & $\{0,+,-,\Lambda\}$ \\
\end{tabular}
\end{table}

By construction, the restriction of $\sop$ to
$\{0,+,-\}\subset\Sset$ coincides with
$\oplus_{\mathrm{sign}}$.

\begin{lemma}[Basic properties of $(\Sset,\sop)$]
\label{lem:S-basic}
The hyperoperation $\sop$ in Definition~\ref{def:S-hyperoperation} is
well-defined, nonempty-valued, and commutative.  Its neutral element
is $0$, and every element has a unique additive inverse given by
\[
  -0=0,\quad
  -(+)= -,\quad
  -(-)=+,\quad
  -\Lambda=\Lambda.
\]
\end{lemma}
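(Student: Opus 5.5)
The plan is to read everything off Definition~\ref{def:S-hyperoperation} and Table~\ref{tab:sign-hyperop}, in three steps.

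\textbf{Well-definedness.} I will check that the clauses cover every ordered pair in $\Sset\times\Sset$, and that overlapping clauses give the same value. Pairs containing $0$ fall under clause (1). The overlap $\Lambda\sop 0$ in clause (3) gives $\{\Lambda\}$, which agrees with clause (1). Pairs in $\{+,-\}^2$ fall under clause (2). Mixed pairs $\{\sigma,\Lambda\}$ with $\sigma\in\{+,-\}$ fall under clause (3). The pair $(\Lambda,\Lambda)$ falls under clause (4). Since the sixteen entries of the Cayley table are determined without conflict, $\sop$ is a well-defined map. Every entry listed is a nonempty subset of $\Sset$, so $\sop$ takes values in $\mathcal{P}(\Sset)\setminus\{\emptyset\}$.

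\textbf{Commutativity.} This is built into the definition through the symmetry convention. Equivalently, Table~\ref{tab:sign-hyperop} is symmetric about its diagonal; the only off-diagonal pair needing comment is $+\sop-=-\sop+=\{+,0,-\}$, which clause (2) states explicitly. Clause (1) then shows that $0$ is a neutral element, since $0\sop\sigma=\sigma\sop0=\{\sigma\}$ for every $\sigma$.

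\textbf{Inverses.} For each $\sigma$ I will determine the set of $\tau$ with $0\in\sigma\sop\tau$ by scanning row $\sigma$ of the table.
\begin{itemize}
  \item Row $0$ contains $0$ only in column $0$, so $-0=0$.
  \item Row $+$ contains $0$ only in column $-$, because $+\sop0=\{+\}$, $+\sop+=\{+\}$ and $+\sop\Lambda=\{\Lambda\}$. Hence $-(+)=-$.
  \item Row $-$ is symmetric to row $+$, so $-(-)=+$.
  \item Row $\Lambda$ has entries $\{\Lambda\}$ in columns $0,+,-$ and $\{0,+,-,\Lambda\}$ in column $\Lambda$. Hence $-\Lambda=\Lambda$, and it is the only inverse.
\end{itemize}
This gives existence and uniqueness together. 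The only point requiring care is the coverage and overlap bookkeeping in the well-definedness step, in particular the redundant second line of clause (3); I expect no real obstacle.
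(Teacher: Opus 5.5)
Your proposal is correct and follows essentially the same route as the paper: well-definedness, nonemptiness and commutativity are read off from the clauses of Definition~\ref{def:S-hyperoperation}, $0$ is neutral by clause~(1), and the inverses come from checking which entries of the Cayley table contain $0$. Your explicit check that the redundant line $\Lambda\sop 0=\{\Lambda\}$ in clause~(3) agrees with clause~(1) is slightly more careful than the paper, which asserts that every ordered pair falls into exactly one clause.
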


\begin{proof}
By inspection of the cases in
Definition~\ref{def:S-hyperoperation}, every ordered pair
$(\sigma_1,\sigma_2)\in\Sset^2$ falls into exactly one clause, and the
right-hand side is a nonempty subset of $\Sset$; thus $\sop$ is
well-defined and nonempty-valued.  Symmetry is built in by the
explicit requirement $\sigma_1\sop\sigma_2=\sigma_2\sop\sigma_1$, so
$\sop$ is commutative.

From clause (1), $0$ satisfies
\[
  0\sop\sigma
  = \sigma\sop 0
  = \{\sigma\}
  \quad\text{for all }\sigma\in\Sset,
\]
so $0$ is a neutral element.

An additive inverse of $\sigma\in\Sset$ is an element
$\tau\in\Sset$ such that $0\in\sigma\sop\tau$.  From the table we read
off:
\begin{itemize}[leftmargin=2em]
  \item $0\in0\sop0=\{0\}$, and there is no other $\tau$ with
        $0\in0\sop\tau$, so $-0=0$.
  \item $0\in+\sop-=+\oplus_{\mathrm{sign}}-=\{+,0,-\}$, and there is
        no other $\tau$ with $0\in+\sop\tau$, so $-(+)=-$.
  \item Symmetrically, $-(-)=+$.
  \item $\Lambda\sop\Lambda=\{0,+,-,\Lambda\}$ contains $0$, and
        $\Lambda\sop\sigma=\{\Lambda\}$ for $\sigma\in\{0,+,-\}$ does
        not contain $0$, so $-\Lambda=\Lambda$.
\end{itemize}
Hence each element of $\Sset$ has a unique additive inverse, given by
the formulas in the statement.
\end{proof}

We now verify that $(\Sset,\sop)$ is a canonical commutative
hypergroup in the sense of Definition~\ref{def:canonical-hypergroup}.

\begin{theorem}[Canonical hypergroup structure on $\Sset$]
\label{thm:S-canonical-hypergroup}
The pair $(\Sset,\sop)$ is a canonical commutative hypergroup.  More
precisely:
\begin{enumerate}
  \item $(\Sset,\sop)$ is a commutative hypermagma.
  \item $0$ is a neutral element, and each $\sigma\in\Sset$ has a
        unique inverse $-\sigma$ (Lemma~\ref{lem:S-basic}).
  \item Associativity holds:
  \[
    (\sigma_1\sop\sigma_2)\sop\sigma_3
    \;=\;
    \sigma_1\sop(\sigma_2\sop\sigma_3)
    \qquad\text{for all }
    \sigma_1,\sigma_2,\sigma_3\in\Sset.
  \]
  \item Reversibility holds:
  \[
    \rho\in\sigma_1\sop\sigma_2
    \quad\Longleftrightarrow\quad
    \sigma_2\in\rho\sop(-\sigma_1)
    \qquad\text{for all }\rho,\sigma_1,\sigma_2\in\Sset.
  \]
\end{enumerate}
\end{theorem}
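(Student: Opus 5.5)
The plan is to prove items (1)--(4) of Theorem~\ref{thm:S-canonical-hypergroup} in turn. Items (1) and (2) are exactly Lemma~\ref{lem:S-basic}, so nothing new is needed there. For (3) and (4) I will avoid enumerating all $64$ triples. Instead I will use one structural observation about Table~\ref{tab:sign-hyperop}: write $S_{\mathrm{sign}}=\{0,+,-\}$. Then:
\begin{itemize}
\item[(a)] $\sop$ restricted to $S_{\mathrm{sign}}$ is $\oplus_{\mathrm{sign}}$ and takes values in subsets of $S_{\mathrm{sign}}$;
\item[(b)] $\sigma\sop\Lambda=\{\Lambda\}$ for every $\sigma\in S_{\mathrm{sign}}$;
\item[(c)] $\Lambda\sop\Lambda=\Sset$.
\end{itemize}
Put briefly, $\Lambda\in\sigma_1\sop\sigma_2$ if and only if at least one of $\sigma_1,\sigma_2$ equals $\Lambda$.

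For associativity I split by the number $k$ of entries equal to $\Lambda$ in $(\sigma_1,\sigma_2,\sigma_3)$.
\begin{itemize}
\item $k=0$: the claim is the associativity part of Proposition~\ref{prop:sign-hyperfield}, since by (a) all intermediate sums stay inside $S_{\mathrm{sign}}$.
\item $k=1$: both bracketings equal $\{\Lambda\}$. Whichever bracket does not contain $\Lambda$ produces a subset of $S_{\mathrm{sign}}$ by (a), and adding $\Lambda$ to it gives $\{\Lambda\}$ by (b). A bracket that contains $\Lambda$ together with a classical sign already equals $\{\Lambda\}$ by (b), and adding the remaining classical sign keeps it $\{\Lambda\}$.
\item $k\ge2$: both bracketings equal $\Sset$. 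If a bracket is $\Lambda\sop\Lambda$, it produces $\Sset$, and then $\bigcup_{\rho\in\Sset}\rho\sop\sigma\supseteq\Lambda\sop\Lambda=\Sset$ when $\sigma=\Lambda$. When $\sigma$ is classical, the union over $\rho$ contains $\Lambda\sop\sigma=\{\Lambda\}$, $0\sop\sigma=\{\sigma\}$ and $(-\sigma)\sop\sigma$; for $\sigma=\pm$ the last is $\{+,0,-\}$, and for $\sigma=0$ the union is $\Sset$ directly. If instead a bracket is $\Lambda\sop\sigma$ with $\sigma$ classical, it gives $\{\Lambda\}$, and adding the remaining $\Lambda$ gives $\Sset$ by (c).
\end{itemize}
By the permutation symmetry of Lemma~\ref{lem:assoc-permutation}, which holds for any commutative hypermagma, it suffices to treat one ordering within each case.

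For reversibility I again split, this time according to whether $\Lambda$ occurs among $\rho,\sigma_1,\sigma_2$.
\begin{itemize}
\item If none of them is $\Lambda$, the claim is the reversibility of the sign hyperfield from Proposition~\ref{prop:sign-hyperfield}.
\item Otherwise I use the characterization noted above, together with $-\Lambda=\Lambda$ and $-(S_{\mathrm{sign}})=S_{\mathrm{sign}}$:
\begin{itemize}
\item[$\circ$] If $\rho=\Lambda$ and $\sigma_1$ is classical, both sides reduce to $\sigma_2=\Lambda$, since $\Lambda\sop(-\sigma_1)=\{\Lambda\}$.
\item[$\circ$] If $\sigma_1=\Lambda$, the left side holds for every $\rho,\sigma_2$: either $\sigma_2$ is classical and $\Lambda\sop\sigma_2=\{\Lambda\}$ forces $\rho=\Lambda$, or $\sigma_2=\Lambda$ and the sum is $\Sset$. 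The right side $\sigma_2\in\rho\sop\Lambda$ matches this exactly: for classical $\rho$ it forces $\sigma_2=\Lambda$, and for $\rho=\Lambda$ it always holds.
\item[$\circ$] If exactly $\sigma_2=\Lambda$ with $\rho,\sigma_1$ classical, both sides are false: $\sigma_1\sop\Lambda=\{\Lambda\}\not\ni\rho$, and $\rho\sop(-\sigma_1)\subseteq S_{\mathrm{sign}}\not\ni\Lambda$.
\end{itemize}
\end{itemize}

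The main obstacle is only bookkeeping. I expect the delicate points to be the $k\ge2$ associativity case with a classical third entry, where one must check that the union really fills all of $\Sset$ (including $\sigma=0$), and the completeness of the reversibility case split. I would organize the latter as a small table indexed by which of $\rho,\sigma_1,\sigma_2$ equal $\Lambda$, giving $8$ patterns, each settled by one line.
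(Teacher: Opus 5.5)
Your proof is correct and follows the same route as the paper. Associativity is split by the number of $\Lambda$'s, with the $\Lambda$-free case delegated to the sign hyperfield. Reversibility is split by where $\Lambda$ occurs among $\rho,\sigma_1,\sigma_2$. The paper only sketches that case split; you actually carry it out, using the clean criterion that $\Lambda\in\sigma_1\sop\sigma_2$ iff some input is $\Lambda$.

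One justification should go: Lemma~\ref{lem:assoc-permutation} does \emph{not} hold in an arbitrary commutative hypermagma. Commutativity only gives $x\op(y\op z)=(y\op z)\op x$. So associativity at $(x,y,z)$ means $(x\op y)\op z=(y\op z)\op x$, while at $(y,x,z)$ it means $(x\op y)\op z=(x\op z)\op y$, a different condition. In $\Hnum$ itself, $((+,a),(-,b),(\Lambda,c))$ is non-associative, whereas $((+,a),(\Lambda,c),(-,b))$ is associative, both bracketings being $\{(\Lambda,a+b+c)\}$.

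No harm results, because your $k=1$ and $k\ge2$ arguments are phrased for an arbitrary bracket (``whichever bracket does not contain $\Lambda$'', ``if a bracket is $\Lambda\sop\Lambda$ \dots\ if instead a bracket is $\Lambda\sop\sigma$''). They therefore already cover every ordering and both bracketings, so simply delete the appeal to the lemma.

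Two smaller points. In the $\sigma_1=\Lambda$ subcase, ``the left side holds for every $\rho,\sigma_2$'' should read ``the left side holds iff $\rho=\Lambda$ or $\sigma_2=\Lambda$'', which is what you then verify. Also, your explicit use of $(-\sigma)\sop\sigma=\{+,0,-\}$ in the $k\ge2$ case is more careful than the paper's version of that step, which asserts that sums of two non-$\Lambda$ signs are singletons.
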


\begin{proof}
Items (1) and (2) have already been established in
Lemma~\ref{lem:S-basic}.

\smallskip\noindent
\emph{Associativity (HG2).}  Since $\Sset$ is finite, associativity
can be verified by direct case analysis; we outline the main cases and
omit the exhaustive table.

If all three signs lie in $\{0,+,-\}$, then the operation reduces to
$\oplus_{\mathrm{sign}}$ on the sign hyperfield, which is known to be
associative (Proposition~\ref{prop:sign-hyperfield}).  Thus all
triples with no $\Lambda$ are associative.

Next, consider triples involving $\Lambda$.

\smallskip
\noindent
$\bullet$ \emph{Exactly one $\Lambda$.}  By symmetry it suffices to
consider $(\sigma_1,\sigma_2,\sigma_3)=(\Lambda,\tau_1,\tau_2)$ with
$\tau_1,\tau_2\in\{0,+,-\}$.  Using clauses (1) and (3), we find
\[
  \Lambda\sop\tau = \{\Lambda\}
  \quad\text{for }\tau\in\{0,+,-\},
\]
and $\tau_1\sop\tau_2\subset\{0,+,-\}$ with no $\Lambda$ produced.
Therefore
\[
  (\Lambda\sop\tau_1)\sop\tau_2
  = \{\Lambda\}\sop\tau_2
  = \{\Lambda\},
\]
and
\[
  \Lambda\sop(\tau_1\sop\tau_2)
  = \Lambda\sop B
  = \{\Lambda\}
\]
for any $B\subset\{0,+,-\}$.  Thus both bracketings yield
$\{\Lambda\}$, and associativity holds.

\smallskip
\noindent
$\bullet$ \emph{Two $\Lambda$'s.}  Up to symmetry, it suffices to
consider triples of the form $(\Lambda,\Lambda,\tau)$ with
$\tau\in\{0,+,-,\Lambda\}$.  If $\tau\in\{0,+,-\}$, then
\[
  \Lambda\sop\Lambda = \{0,+,-,\Lambda\},
\]
so
\[
  (\Lambda\sop\Lambda)\sop\tau
  = \{0,+,-,\Lambda\}\sop\tau.
\]
For each $\sigma\in\{0,+,-,\Lambda\}$, $\sigma\sop\tau$ is either a
singleton subset of $\{0,+,-,\Lambda\}$ (if $\sigma\ne\Lambda$ and
$\tau\ne\Lambda$) or $\{\Lambda\}$ if at least one input is
$\Lambda$.  It follows that
\[
  (\Lambda\sop\Lambda)\sop\tau
  = \{0,+,-,\Lambda\}.
\]
On the other hand,
\[
  \Lambda\sop(\Lambda\sop\tau)
  =
  \begin{cases}
    \Lambda\sop\{\Lambda\} = \Lambda\sop\Lambda
      = \{0,+,-,\Lambda\},
      & \tau\in\{0,+,-\},\\[2pt]
    \Lambda\sop(\Lambda\sop\Lambda)
      = \Lambda\sop\{0,+,-,\Lambda\}
      = \{0,+,-,\Lambda\},
      & \tau=\Lambda,
  \end{cases}
\]
since $\Lambda\sop\tau=\{\Lambda\}$ for $\tau\in\{0,+,-\}$ and
$\Lambda\sop\Lambda=\{0,+,-,\Lambda\}$ by (3) and (4).  Thus
associativity holds whenever at least two entries are $\Lambda$.

\smallskip
\noindent
In summary, for every triple in $\Sset^3$ both bracketings of $\sop$
coincide.  This establishes associativity.

\smallskip\noindent
\emph{Reversibility (HG5).}  We must show that for all
$\rho,\sigma_1,\sigma_2\in\Sset$,
\[
  \rho\in\sigma_1\sop\sigma_2
  \quad\Longleftrightarrow\quad
  \sigma_2\in\rho\sop(-\sigma_1).
\]
Since $\Sset$ is finite, this can again be checked by case analysis.
We sketch the argument.

If $\sigma_1,\sigma_2,\rho\in\{0,+,-\}$, then the statement reduces to
the reversibility axiom for the additive hypergroup of the sign
hyperfield $(S_{\mathrm{sign}},\oplus_{\mathrm{sign}})$, which holds
by Proposition~\ref{prop:sign-hyperfield}.  Thus we may assume that at
least one of the three signs is $\Lambda$.

If $\sigma_1=\Lambda$, then $-\sigma_1=\Lambda$ by
Lemma~\ref{lem:S-basic}.  For any $\sigma_2\in\Sset$,
\[
  \Lambda\sop\sigma_2
  =
  \begin{cases}
    \{\Lambda\},        & \sigma_2\in\{0,+,-\},\\[2pt]
    \{0,+,-,\Lambda\},  & \sigma_2=\Lambda.
  \end{cases}
\]
A quick inspection shows that the membership
$\rho\in\Lambda\sop\sigma_2$ is equivalent to
$\sigma_2\in\rho\sop\Lambda$, using the same table and the fact that
$-\Lambda=\Lambda$.  The cases where $\sigma_2=\Lambda$ or
$\rho=\Lambda$ are similar.

Finally, if $\sigma_1\in\{+,-\}$ and $\sigma_2=\Lambda$ or
$\rho=\Lambda$, one checks that the only way $\Lambda$ appears in a
hyper--sum is via the clauses in Definition~\ref{def:S-hyperoperation},
and that in each such case reversing the roles of ``summand'' and
``inverse'' yields the same membership relation.  In other words, the
pattern of which signs occur in which sums is symmetric with respect
to replacing $\sigma_1$ by $-\sigma_1$ and swapping $\rho$ with
$\sigma_2$.  We omit the routine enumeration of all triples.

Thus the reversibility condition holds for all
$\rho,\sigma_1,\sigma_2\in\Sset$, completing the proof that
$(\Sset,\sop)$ is a canonical commutative hypergroup.
\end{proof}

The three--element sign hyperfield embeds naturally into
$(\Sset,\sop)$ as a sub--hypergroup.

\begin{proposition}[Classical sign hypergroup as a substructure]
\label{prop:S-sign-subhypergroup}
Let $S_{\mathrm{sign}}=\{0,+,-\}\subset\Sset$.  Then the restriction
of $\sop$ to $S_{\mathrm{sign}}$ coincides with
$\oplus_{\mathrm{sign}}$, and $(S_{\mathrm{sign}},\sop)$ is a
canonical commutative hypergroup isomorphic to the additive hypergroup
of the sign hyperfield.
\end{proposition}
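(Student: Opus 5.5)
The plan has three steps: first show that $\sop$ restricts to $S_{\mathrm{sign}}$ and agrees with $\oplus_{\mathrm{sign}}$ there, then that the inclusion is an isomorphism of hypermagmas, and finally transport the canonical hypergroup axioms from Proposition~\ref{prop:sign-hyperfield}.

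\textbf{Restriction and closure.} For $\sigma_1,\sigma_2\in S_{\mathrm{sign}}=\{0,+,-\}$, the value $\sigma_1\sop\sigma_2$ is given by clause (1) of Definition~\ref{def:S-hyperoperation} if one entry is $0$, and by clause (2) otherwise. Clause (1) returns $\{\sigma\}$, matching the neutral-element rule of $\oplus_{\mathrm{sign}}$. Clause (2) is defined to be $\oplus_{\mathrm{sign}}$. Clauses (3) and (4) involve $\Lambda$ as an input, so they never apply. Hence $\sigma_1\sop\sigma_2=\sigma_1\oplus_{\mathrm{sign}}\sigma_2$ as subsets of $\Sset$. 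Since these sets avoid $\Lambda$, the restriction is a well-defined hyperoperation on $S_{\mathrm{sign}}$. This can be read off directly from the upper-left $3\times3$ block of Table~\ref{tab:sign-hyperop}.

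\textbf{Isomorphism and axioms.} By the previous step, the identity map $S_{\mathrm{sign}}\to S_{\mathrm{sign}}$ satisfies $\mathrm{id}(x\sop y)=\mathrm{id}(x)\oplus_{\mathrm{sign}}\mathrm{id}(y)$. It is therefore an isomorphism of hypermagmas from $(S_{\mathrm{sign}},\sop)$ onto $(S_{\mathrm{sign}},\oplus_{\mathrm{sign}})$. Proposition~\ref{prop:sign-hyperfield} states that the latter is a canonical commutative hypergroup. Axioms (HG1)--(HG5) are phrased purely in terms of the hyperoperation, so they transfer verbatim.

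\textbf{Possible obstacle.} The one point needing care is (HG4)--(HG5), because the inverse must be computed inside the substructure. In the larger $(\Sset,\sop)$, an element of $S_{\mathrm{sign}}$ might in principle acquire an extra inverse outside $S_{\mathrm{sign}}$. Lemma~\ref{lem:S-basic} rules this out: the inverses of $0,+,-$ in $\Sset$ are $0,-,+$, which all lie in $S_{\mathrm{sign}}$ and agree with the sign-hyperfield inverses. Consequently the reversibility statement for the substructure is literally the one from Proposition~\ref{prop:sign-hyperfield}. No further case analysis should be needed.
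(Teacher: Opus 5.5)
Your proposal is correct and follows essentially the same route as the paper: clauses (1)--(2) of Definition~\ref{def:S-hyperoperation} show that $\sop$ restricted to $S_{\mathrm{sign}}$ is literally $\oplus_{\mathrm{sign}}$, and (HG1)--(HG5) are then inherited from Proposition~\ref{prop:sign-hyperfield}. Your closing worry about inverses is not needed for the statement, since the axioms for $(S_{\mathrm{sign}},\sop)$ quantify only over elements of $S_{\mathrm{sign}}$; the appeal to Lemma~\ref{lem:S-basic} matters only for the paper's side remark that the inclusion $S_{\mathrm{sign}}\hookrightarrow\Sset$ is a homomorphism of canonical hypergroups.
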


\begin{proof}
By Definition~\ref{def:S-hyperoperation}, for $x,y\in\{+,-\}$ we have
$x\sop y=x\oplus_{\mathrm{sign}}y$, and for $x\in S_{\mathrm{sign}}$,
\[
  0\sop x = x\sop 0 = \{x\}.
\]
Thus the restriction of $\sop$ to $S_{\mathrm{sign}}\times
S_{\mathrm{sign}}$ is exactly $\oplus_{\mathrm{sign}}$.  Since
$(S_{\mathrm{sign}},\oplus_{\mathrm{sign}})$ is a canonical
commutative hypergroup by
Proposition~\ref{prop:sign-hyperfield}, the same holds for
$(S_{\mathrm{sign}},\sop)$, and the inclusion
$S_{\mathrm{sign}}\hookrightarrow\Sset$ is a homomorphism of
canonical commutative hypergroups.
\end{proof}

\begin{remark}[Role of $(\Sset,\sop)$ as sign envelope]
\label{rem:S-enveloping}
The hypergroup $(\Sset,\sop)$ will serve as a \emph{sign envelope} for
the additive structure on $\Hnum$.  For each pair $x,y\in\Hnum$ we
will define the \emph{sign image}
\[
  E_{\mathrm{sign}}(x,y)
  := \{\sgn(z) : z\in x\op y\}\subset\Sset
\]
and show that
\[
  E_{\mathrm{sign}}(x,y)
  \subseteq \sgn(x)\sop\sgn(y).
\]
Moreover, as one varies the magnitudes of $x$ and $y$, every sign in
$\sgn(x)\sop\sgn(y)$ is realized.  Thus $(\Sset,\sop)$ captures the
full range of sign--level behaviour of the hyperaddition $\op$ on
$\Hnum$, in a setting where associativity and reversibility hold
exactly.  The precise envelope statement will be developed in
Section~\ref{subsec:sign-hypergroup}.
\end{remark}

\subsection{Sign images of hyper-sums}
\label{subsec:sign-images}

In this subsection we make precise the relationship between the sign
hypergroup $(\Sset,\sop)$ from
Section~\ref{subsec:sign-hypergroup} and the hyperaddition $\op$ on
$\Hnum$ from Section~\ref{subsec:hyperaddition}.  The main point is
that the sign hypergroup does \emph{not} arise by projecting individual
hyper-sums $x\op y$ to their signs; rather, it describes the
\emph{envelope} of all sign patterns that can occur as we vary the
magnitudes of summands with fixed signs.

Throughout we write $\sgn:\Hnum\to\Sset$ for the sign map from
Definition~\ref{def:H-universe}.

\begin{definition}[Sign image of a hyper-sum]
\label{def:sign-image}
For $x,y\in\Hnum$ we define the \emph{sign image} of their hyper-sum as
\[
  E_{\mathrm{sign}}(x,y)
  := \{\sgn(z) : z\in x\op y\}\subset\Sset.
\]
\end{definition}

Since $x\op y$ is always a nonempty finite subset of $\Hnum$ (see
Lemma~\ref{lem:H-hyperadd-basic}), the sign image
$E_{\mathrm{sign}}(x,y)$ is a nonempty finite subset of $\Sset$.

\begin{lemma}[Basic properties of sign images]
\label{lem:sign-image-basic}
Let $x,y\in\Hnum$.  Then:
\begin{enumerate}
  \item $E_{\mathrm{sign}}(x,y)$ is nonempty and contained in
        $\{0,+,-,\Lambda\}$.
  \item $E_{\mathrm{sign}}(x,y) = E_{\mathrm{sign}}(y,x)$.
  \item If $x\in\Rc$ and $y\in\Rc$ (the classical real line), then
  \[
    E_{\mathrm{sign}}(x,y)
    = \{\sgn(x\op y)\}
    = \{\operatorname{sgn}_\R(r+s)\},
  \]
  where $x=\iemb(r)$, $y=\iemb(s)$ and $\operatorname{sgn}_\R$ is the
  real sign map (Definition~\ref{def:basic-real-sign}).
\end{enumerate}
\end{lemma}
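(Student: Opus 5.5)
The three assertions are essentially unwindings of the definitions, and I would prove them in the order stated, using only Lemma~\ref{lem:H-hyperadd-basic}, Proposition~\ref{prop:H-add-Rc} and Lemma~\ref{lem:embedding-basic}.

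For (1), I would invoke Lemma~\ref{lem:H-hyperadd-basic}: for every $x,y\in\Hnum$ the set $x\op y$ is nonempty and finite. Hence its image under $\sgn$ is a nonempty finite set. Since $\sgn$ takes values in $\Sset=\{0,+,-,\Lambda\}$ by Definition~\ref{def:H-universe}, this image lies in $\{0,+,-,\Lambda\}$.

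For (2), the same lemma gives commutativity, $x\op y=y\op x$ as subsets of $\Hnum$. Applying $\sgn$ elementwise to equal sets gives equal images, so $E_{\mathrm{sign}}(x,y)=E_{\mathrm{sign}}(y,x)$.

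For (3), I would write $x=\iemb(r)$ and $y=\iemb(s)$ with $r,s\in\R$; this is possible because $\Rc=\iemb(\R)$. By Proposition~\ref{prop:H-add-Rc}, $x\op y=\{\iemb(r+s)\}$ is a singleton, so
\[
  E_{\mathrm{sign}}(x,y)=\{\sgn(\iemb(r+s))\}.
\]
Here $\sgn(x\op y)$ should be read as the sign of the unique element of this singleton. Lemma~\ref{lem:embedding-basic} then gives $\sgn(\iemb(r+s))=\operatorname{sgn}_\R(r+s)$, which is the claimed formula.

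The only point needing care is notational rather than mathematical. The expression $\sgn(x\op y)$ applies $\sgn$ to a set, and it is meaningful only because $x\op y$ is a singleton in this case. I would state that identification explicitly; no genuine obstacle is expected.
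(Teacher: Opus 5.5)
Your proposal is correct and follows the paper's proof essentially step for step: (1) from the nonemptiness of $x\op y$ and the fact that $\sgn$ is well defined, (2) from commutativity in Lemma~\ref{lem:H-hyperadd-basic}, and (3) from Proposition~\ref{prop:H-add-Rc} combined with Lemma~\ref{lem:embedding-basic}. Your point that $\sgn(x\op y)$ must be read as the sign of the unique element of a singleton is a sensible clarification that the paper leaves implicit.
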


\begin{proof}
(1) follows directly from the definition and the fact that $x\op y$ is
nonempty and $\sgn:\Hnum\to\Sset$ is well-defined.

(2) Since $\op$ is commutative (Lemma~\ref{lem:H-hyperadd-basic}), we
have $x\op y=y\op x$, hence
\[
  E_{\mathrm{sign}}(x,y)
  = \{\sgn(z): z\in x\op y\}
  = \{\sgn(z): z\in y\op x\}
  = E_{\mathrm{sign}}(y,x).
\]

(3) If $x,y\in\Rc$, then by Proposition~\ref{prop:H-add-Rc} the
hyper-sum $x\op y$ is a singleton $\{\iemb(r+s)\}$, where
$x=\iemb(r)$, $y=\iemb(s)$.  Thus
\[
  E_{\mathrm{sign}}(x,y)
  = \{\sgn(\iemb(r+s))\}
  = \{\operatorname{sgn}_\R(r+s)\}
\]
by Lemma~\ref{lem:embedding-basic}.
\end{proof}

We now compare $E_{\mathrm{sign}}(x,y)$ with the sign hyperoperation
$\sop$ on $\Sset$.  As noted in Remark~\ref{rem:S-enveloping}, it is
\emph{not} true in general that $E_{\mathrm{sign}}(x,y)$ equals
$\sgn(x)\sop\sgn(y)$ for fixed magnitudes; instead we have the
following inclusion.

\begin{theorem}[Sign-envelope property, pointwise inclusion]
\label{thm:sign-envelope-inclusion}
For all $x,y\in\Hnum$ one has
\[
  E_{\mathrm{sign}}(x,y)
  \subseteq \sgn(x)\sop\sgn(y)\subset\Sset.
\]
\end{theorem}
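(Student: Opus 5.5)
The proof is a direct case analysis following the clause structure of Definition~\ref{def:H-hyperadd}. For each clause (1)--(5) we read off the signs of the elements of $x\op y$ and compare them with the corresponding entry of Table~\ref{tab:sign-hyperop}. Commutativity of both $\op$ (Lemma~\ref{lem:H-hyperadd-basic}) and $\sop$, together with Lemma~\ref{lem:sign-image-basic}(2), lets us treat each unordered sign pair $\{\sgn(x),\sgn(y)\}$ only once.

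We carry out the cases in the following order.

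\textbf{Zero present.} If $x=0$ or $y=0$, then $x\op y=\{y\}$ (or $\{x\}$). Hence $E_{\mathrm{sign}}(x,y)=\{\sgn(y)\}=0\sop\sgn(y)$ by clause (1) of Definition~\ref{def:S-hyperoperation}.

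\textbf{Same real sign.} For $\sigma\in\{+,-\}$, clause (2) gives the single sign $\sigma$. This lies in $\sigma\sop\sigma=\{\sigma\}$.

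\textbf{Opposite real signs.} Clause (3) yields exactly one of $+$, $0$, $-$, each of which lies in $+\sop-=\{+,0,-\}$. Alternatively, one can invoke Lemma~\ref{lem:sign-image-basic}(3) together with Lemma~\ref{lem:sign-map}.

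\textbf{One $\Lambda$ and one real sign.} Clause (4) produces only $\Lambda$-elements, and $\pm\sop\Lambda=\{\Lambda\}$.

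\textbf{Two $\Lambda$'s.} Clause (5) produces signs contained in $\{\Lambda,0\}$ when $a=b$, and in $\{\Lambda,+,-\}$ when $a\neq b$. Both sets are contained in $\Lambda\sop\Lambda=\Sset$.

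Since Lemma~\ref{lem:H-hyperadd-basic} shows every pair falls under exactly one clause, these cases exhaust $\Hnum\times\Hnum$ and prove the inclusion.

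There is no genuine obstacle here. The only point needing care is bookkeeping: the case split for $\op$ is indexed by sign pairs in $\Snon$ together with the zero case, and this must be matched correctly with the Cayley table of $\sop$. In particular, the magnitude-dependent subcases of clauses (3) and (5) must both be checked to land inside the stated sets. I would present the argument as a short table listing, for each sign pair, the possible output signs beside $\sgn(x)\sop\sgn(y)$.
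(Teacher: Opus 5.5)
Your proof is correct and follows the same clause-by-clause case analysis as the paper: zero, same real sign, opposite real signs, one $\Lambda$, two $\Lambda$'s, each compared against the Cayley table of $\sop$. In the $\Lambda$--$\Lambda$ case you actually quote clause (5) more faithfully than the paper's own proof, which lists only one of $(+,|a-b|)$, $(-,|a-b|)$ for $a\neq b$; the conclusion is immediate either way because $\Lambda\sop\Lambda=\Sset$.
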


\begin{proof}
Let $x,y\in\Hnum$ and write
\[
  \sigma := \sgn(x),\qquad
  \tau := \sgn(y)\in\Sset.
\]
We must show that every sign appearing in $x\op y$ lies in
$\sigma\sop\tau$ as defined in
Definition~\ref{def:S-hyperoperation}.  We proceed case by case.

\smallskip\noindent
\emph{Case 1: One of $x,y$ is $0$.}  Suppose $x=0$.  Then by
Definition~\ref{def:H-hyperadd},
\[
  x\op y = 0\op y = \{y\},
\]
so $E_{\mathrm{sign}}(x,y)=\{\sgn(y)\}=\{\tau\}$.  On the other hand,
$\sigma=0$ and clause (1) of
Definition~\ref{def:S-hyperoperation} gives
\[
  0\sop\tau = \{\tau\}.
\]
Thus $E_{\mathrm{sign}}(x,y)=\{\tau\}\subseteq 0\sop\tau$, as
required.  The case $y=0$ is symmetric.

\smallskip\noindent
\emph{Case 2: Nonzero $x,y$ with signs in $\{+,-\}$.}  Write
$x=(\sigma,a)$, $y=(\tau,b)$ with $a,b>0$ and
$\sigma,\tau\in\{+,-\}$.

If $\sigma=\tau$, then clause (2) of
Definition~\ref{def:H-hyperadd} gives
\[
  x\op y = \{(\sigma,a+b)\},
\]
so $E_{\mathrm{sign}}(x,y)=\{\sigma\}$.  On the other hand,
$\sigma\sop\sigma=\{\sigma\}$ by clause (2) of
Definition~\ref{def:S-hyperoperation} (classical part).  Thus
$\{\sigma\}\subseteq\sigma\sop\sigma$.

If $\{\sigma,\tau\}=\{+,-\}$, say $\sigma=+$, $\tau=-$, then clause
(3) of Definition~\ref{def:H-hyperadd} yields
\[
  x\op y
  = (+,a)\op(-,b)
  =
  \begin{cases}
    \{(+,a-b)\}, & a>b,\\[2pt]
    \{0\},       & a=b,\\[2pt]
    \{(-,b-a)\}, & a<b.
  \end{cases}
\]
Hence $E_{\mathrm{sign}}(x,y)$ is one of $\{+\}$, $\{0\}$, or
$\{-\}$, depending on the relative magnitudes of $a$ and $b$.  In all
cases we have
\[
  E_{\mathrm{sign}}(x,y)\subseteq\{+,0,-\}.
\]
By Definition~\ref{def:S-hyperoperation},
\[
  +\sop- = -\sop+ = \{+,0,-\},
\]
so $E_{\mathrm{sign}}(x,y)\subseteq\sigma\sop\tau$.

\smallskip\noindent
\emph{Case 3: Interaction with $\Lambda$.}  Suppose
$x=(\sigma,a)$, $y=(\Lambda,b)$ with $a,b>0$ and
$\sigma\in\{+,-\}$ (or the same with $x$ and $y$ swapped).  Then
clause (4) of Definition~\ref{def:H-hyperadd} gives
\[
  x\op y
  = (\sigma,a)\op(\Lambda,b)
  = \{(\Lambda,a+b)\},
\]
so $E_{\mathrm{sign}}(x,y)=\{\Lambda\}$.  By
Definition~\ref{def:S-hyperoperation},
\[
  \sigma\sop\Lambda = \Lambda\sop\sigma = \{\Lambda\},
\]
so again $E_{\mathrm{sign}}(x,y)\subseteq\sigma\sop\tau$.

If $x=(\Lambda,a)$, $y=(0)$ or vice versa, then
\[
  x\op y = \{(\Lambda,a)\}
\]
and $E_{\mathrm{sign}}(x,y)=\{\Lambda\}$, while
$0\sop\Lambda=\Lambda\sop0=\{\Lambda\}$, so the inclusion holds.

\smallskip\noindent
\emph{Case 4: $\Lambda$--$\Lambda$ sums.}  Suppose
$x=(\Lambda,a)$, $y=(\Lambda,b)$ with $a,b>0$.  Then clause (5) of
Definition~\ref{def:H-hyperadd} yields
\[
  x\op y
  =
  \begin{cases}
    \{(\Lambda,a+b),\ (+,a-b)\}, & a>b,\\[2pt]
    \{(\Lambda,a+b),\ (-,b-a)\}, & a<b,\\[2pt]
    \{(\Lambda,2a),\ 0\},        & a=b.
  \end{cases}
\]
Hence the sign image is one of
\[
  E_{\mathrm{sign}}(x,y)
  =
  \begin{cases}
    \{\Lambda,+\}, & a>b,\\[2pt]
    \{\Lambda,-\}, & a<b,\\[2pt]
    \{\Lambda,0\}, & a=b.
  \end{cases}
\]
In each subcase we have
\[
  E_{\mathrm{sign}}(x,y)\subseteq\{0,+,-,\Lambda\}.
\]
But by clause (4) of Definition~\ref{def:S-hyperoperation},
\[
  \Lambda\sop\Lambda = \{0,+,-,\Lambda\},
\]
so again $E_{\mathrm{sign}}(x,y)\subseteq\sigma\sop\tau$ with
$\sigma=\tau=\Lambda$.

\smallskip
This exhausts all possibilities (up to symmetry), so the inclusion
$E_{\mathrm{sign}}(x,y)\subseteq\sgn(x)\sop\sgn(y)$ holds for all
$x,y\in\Hnum$.
\end{proof}

The preceding theorem shows that $(\Sset,\sop)$ is a genuine
``sign-envelope'' for the hyperaddition on $\Hnum$: for fixed $x,y$,
the signs that actually appear in $x\op y$ form a subset of the sign
hyper-sum $\sgn(x)\sop\sgn(y)$.  We now show that this envelope is
sharp when we allow the magnitudes to vary.

\begin{definition}[Reachable sign set for a sign pair]
\label{def:reachable-sign-set}
For $\sigma,\tau\in\Sset$ we define the \emph{reachable sign set}
\[
  R(\sigma,\tau)
  := \{\rho\in\Sset : \exists\,x,y,z\in\Hnum\ \text{with}\ 
      \sgn(x)=\sigma,\ \sgn(y)=\tau,\ z\in x\op y,\ \sgn(z)=\rho\}.
\]
\end{definition}

Thus $R(\sigma,\tau)$ consists of all signs that can occur as signs of
hyper-sums of elements whose signs are exactly $\sigma$ and $\tau$.

\begin{theorem}[Realization of all sign-level hyper-sums]
\label{thm:sign-realization}
For all $\sigma,\tau\in\Sset$ one has
\[
  R(\sigma,\tau)
  = \sigma\sop\tau.
\]
In particular, the sign hypergroup $(\Sset,\sop)$ is the minimal
canonical commutative hypergroup that encodes all sign patterns arising
from hyper-sums in $(\Hnum,\op)$.
\end{theorem}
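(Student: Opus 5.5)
The equality $R(\sigma,\tau)=\sigma\sop\tau$ will follow from two inclusions. The inclusion $R(\sigma,\tau)\subseteq\sigma\sop\tau$ is a direct consequence of Theorem~\ref{thm:sign-envelope-inclusion}. Suppose $\rho\in R(\sigma,\tau)$, witnessed by $x,y,z$ with $\sgn(x)=\sigma$, $\sgn(y)=\tau$, $z\in x\op y$ and $\sgn(z)=\rho$. Then $\rho\in E_{\mathrm{sign}}(x,y)\subseteq\sgn(x)\sop\sgn(y)=\sigma\sop\tau$. So all the real work lies in the reverse inclusion. For that we must produce, for each $\rho\in\sigma\sop\tau$, explicit magnitudes realizing $\rho$. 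By commutativity of both $\op$ and $\sop$ (Lemmas~\ref{lem:H-hyperadd-basic} and~\ref{lem:S-basic}), it suffices to treat unordered sign pairs.

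We go through the Cayley table (Table~\ref{tab:sign-hyperop}) row by row.

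\emph{Pairs involving $0$.} Here $0\sop\tau=\{\tau\}$. Take $x=0$ and any $y$ of sign $\tau$; then $0\op y=\{y\}$ realizes $\tau$.

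\emph{Same classical sign.} For $\sigma=\tau\in\{+,-\}$, the element $(\sigma,1)\op(\sigma,1)=\{(\sigma,2)\}$ realizes $\sigma$.

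\emph{The pair $\{+,-\}$.} By clause~(3) of Definition~\ref{def:H-hyperadd}, $(+,2)\op(-,1)$, $(+,1)\op(-,1)$ and $(+,1)\op(-,2)$ realize $+$, $0$ and $-$ respectively.

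\emph{One classical sign with $\Lambda$.} For $\sigma\in\{+,-\}$, clause~(4) gives $(\sigma,1)\op(\Lambda,1)=\{(\Lambda,2)\}$, which realizes $\Lambda$.

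\emph{The pair $\{\Lambda,\Lambda\}$.} This is the only case where one must reach all four signs. Clause~(5) gives $(\Lambda,1)\op(\Lambda,1)=\{(\Lambda,2),0\}$, which realizes $\Lambda$ and $0$. It also gives $(\Lambda,1)\op(\Lambda,2)=\{(\Lambda,3),(+,1),(-,1)\}$, which realizes both $+$ and $-$. Note that the definition yields both signed elements for $a\ne b$, so a single choice of unequal magnitudes already suffices; this is slightly more than the case split written in the proof of Theorem~\ref{thm:sign-envelope-inclusion} suggests, but either reading gives $+$ and $-$ by choosing $a>b$ or $a<b$.

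The main (mild) obstacle is the ``in particular'' clause about minimality, since that notion is not formally defined. I would interpret it as follows: any hyperoperation $\boxdot$ on $\Sset$ satisfying $E_{\mathrm{sign}}(x,y)\subseteq\sgn(x)\boxdot\sgn(y)$ for all $x,y$ must satisfy $R(\sigma,\tau)\subseteq\sigma\boxdot\tau$. By the equality just proved, this means $\sigma\sop\tau\subseteq\sigma\boxdot\tau$ for all sign pairs. Combined with Theorem~\ref{thm:S-canonical-hypergroup}, which says that $\sop$ is itself a canonical hypergroup, this shows that $\sop$ is the entrywise smallest such operation, and it is in particular canonical.
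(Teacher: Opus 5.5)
Your proposal is correct and matches the paper's proof. The inclusion $R(\sigma,\tau)\subseteq\sigma\sop\tau$ comes from Theorem~\ref{thm:sign-envelope-inclusion}, and the reverse inclusion uses the same explicit witnesses case by case, including $(\Lambda,2)\op(\Lambda,1)$ for $\Lambda,+,-$ and $(\Lambda,1)\op(\Lambda,1)$ for $0$. Your explicit reading of the minimality clause, namely that any operation on $\Sset$ containing every sign image must contain $R(\sigma,\tau)=\sigma\sop\tau$ entrywise, usefully fills in what the paper only asserts as an unproved ``in particular''.
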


\begin{proof}
By Theorem~\ref{thm:sign-envelope-inclusion}, for any $x,y\in\Hnum$
we have
\[
  E_{\mathrm{sign}}(x,y)
  \subseteq \sgn(x)\sop\sgn(y).
\]
Taking the union over all $x,y$ with $\sgn(x)=\sigma$ and
$\sgn(y)=\tau$, we obtain
\[
  R(\sigma,\tau)
  \subseteq \sigma\sop\tau.
\]
Thus it remains to show the reverse inclusion:
for each $\rho\in\sigma\sop\tau$ we must produce $x,y$ with signs
$\sigma,\tau$ such that $\rho\in E_{\mathrm{sign}}(x,y)$.

We proceed by case analysis on $(\sigma,\tau)$ using the explicit
hyperaddition rules.

\smallskip\noindent
\emph{Case 1: $(\sigma,\tau)=(0,\rho_0)$ or $(\rho_0,0)$.}  By
Definition~\ref{def:S-hyperoperation},
\[
  0\sop\rho_0 = \{\rho_0\}
  \quad\text{for all }\rho_0\in\Sset.
\]
To realize $\rho_0$, take $x=0$ and any $y\in\Hnum$ with
$\sgn(y)=\rho_0$.  Then $x\op y = 0\op y = \{y\}$ and
$E_{\mathrm{sign}}(x,y)=\{\rho_0\}$.

\smallskip\noindent
\emph{Case 2: $(\sigma,\tau)\in\{+,-\}^2$.}  If
$\sigma=\tau=+$, then $\sigma\sop\tau=\{+\}$; we can take
$x=(+,1)$, $y=(+,1)$, giving
\[
  x\op y = (+,1)\op(+,1) = \{(+,2)\},
\]
so $E_{\mathrm{sign}}(x,y)=\{+\}$.

If $\sigma=\tau=-$, then $\sigma\sop\tau=\{-\}$; taking
$x=(-,1)$, $y=(-,1)$ yields
\[
  x\op y = (-,1)\op(-,1) = \{(-,2)\},
\]
and $E_{\mathrm{sign}}(x,y)=\{-\}$.

If $\{\sigma,\tau\}=\{+,-\}$, say $\sigma=+$, $\tau=-$, then
\[
  \sigma\sop\tau
  = +\sop-
  = \{+,0,-\}.
\]
We realize each of these three signs by appropriate choices of
magnitudes:

\smallskip
\noindent
$\bullet$ For $\rho=+$, take $x=(+,2)$, $y=(-,1)$.  Then
\[
  x\op y = (+,2)\op(-,1) = \{(+,1)\},
\]
so $E_{\mathrm{sign}}(x,y)=\{+\}$.

\smallskip
\noindent
$\bullet$ For $\rho=0$, take $x=(+,1)$, $y=(-,1)$.  Then
\[
  x\op y = (+,1)\op(-,1) = \{0\},
\]
so $E_{\mathrm{sign}}(x,y)=\{0\}$.

\smallskip
\noindent
$\bullet$ For $\rho=-$, take $x=(+,1)$, $y=(-,2)$.  Then
\[
  x\op y = (+,1)\op(-,2) = \{(-,1)\},
\]
so $E_{\mathrm{sign}}(x,y)=\{-\}$.

Thus every $\rho\in+\sop-$ is realized, and by symmetry the same holds
for $-\sop+$.

\smallskip\noindent
\emph{Case 3: $(\sigma,\tau)=(\pm,\Lambda)$ or $(\Lambda,\pm)$.}
By Definition~\ref{def:S-hyperoperation},
\[
  \sigma\sop\Lambda = \Lambda\sop\sigma = \{\Lambda\}
  \quad\text{for }\sigma\in\{+,-\}.
\]
To realize $\rho=\Lambda$, take $x=(\sigma,1)$ and $y=(\Lambda,1)$.
Then clause (4) of Definition~\ref{def:H-hyperadd} gives
\[
  x\op y = (\sigma,1)\op(\Lambda,1) = \{(\Lambda,2)\},
\]
so $E_{\mathrm{sign}}(x,y)=\{\Lambda\}$.

The case $(\sigma,\tau)=(\Lambda,0)$ or $(0,\Lambda)$ was already
covered in Case~1.

\smallskip\noindent
\emph{Case 4: $(\sigma,\tau)=(\Lambda,\Lambda)$.}  By
Definition~\ref{def:S-hyperoperation},
\[
  \Lambda\sop\Lambda = \{0,+,-,\Lambda\}.
\]
We must realize each of the four signs as the sign of some
$(\Lambda,a)\op(\Lambda,b)$.

\smallskip
\noindent
$\bullet$ Take $a=2$, $b=1$.  Then clause (5) of
Definition~\ref{def:H-hyperadd} gives
\[
  (\Lambda,2)\op(\Lambda,1)
  = \{(\Lambda,3),\, (+,1),\, (-,1)\},
\]
so $\Lambda,+,-\in E_{\mathrm{sign}}\bigl((\Lambda,2),(\Lambda,1)\bigr)$.

\smallskip
\noindent
$\bullet$ For $\rho=0$, take $a=b=1$.  Then
\[
  (\Lambda,1)\op(\Lambda,1)
  = \{(\Lambda,2),0\},
\]
so $0\in E_{\mathrm{sign}}\bigl((\Lambda,1),(\Lambda,1)\bigr)$.

Thus every sign in $\Lambda\sop\Lambda$ is realized.

\smallskip

Combining the four cases, we have shown that for each
$\sigma,\tau\in\Sset$ and each $\rho\in\sigma\sop\tau$ there exist
$x,y\in\Hnum$ with $\sgn(x)=\sigma$, $\sgn(y)=\tau$ and
$\rho\in E_{\mathrm{sign}}(x,y)$.  This proves
$\sigma\sop\tau\subseteq R(\sigma,\tau)$ and completes the proof of
$R(\sigma,\tau)=\sigma\sop\tau$.
\end{proof}

\begin{corollary}[Sign envelope equality]
\label{cor:sign-envelope}
For each fixed $\sigma,\tau\in\Sset$, the set of signs that can appear
in hyper-sums of elements with signs $\sigma$ and $\tau$ is exactly
$\sigma\sop\tau$.  Equivalently,
\[
  \bigcup_{\substack{x,y\in\Hnum\\ \sgn(x)=\sigma,\ \sgn(y)=\tau}}
  E_{\mathrm{sign}}(x,y)
  = \sigma\sop\tau.
\]
\end{corollary}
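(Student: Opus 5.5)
This corollary is a restatement of Theorem~\ref{thm:sign-realization}. The plan is to unwind the definitions and identify the displayed union with the reachable sign set $R(\sigma,\tau)$ of Definition~\ref{def:reachable-sign-set}.

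\textbf{Step 1: the union equals $R(\sigma,\tau)$.} Fix $\sigma,\tau\in\Sset$. A sign $\rho$ lies in the union
\[
  \bigcup_{\sgn(x)=\sigma,\ \sgn(y)=\tau} E_{\mathrm{sign}}(x,y)
\]
exactly when there exist $x,y\in\Hnum$ with $\sgn(x)=\sigma$, $\sgn(y)=\tau$ and some $z\in x\op y$ with $\sgn(z)=\rho$. This is Definition~\ref{def:sign-image} applied pointwise. It is also verbatim the membership condition defining $R(\sigma,\tau)$, so the union and $R(\sigma,\tau)$ coincide as subsets of $\Sset$.

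\textbf{Step 2: invoke the realization theorem.} Theorem~\ref{thm:sign-realization} gives $R(\sigma,\tau)=\sigma\sop\tau$. Combined with Step~1, this yields the displayed equality. The verbal formulation of the corollary is the same statement, read through Definition~\ref{def:reachable-sign-set}.

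\textbf{Logical structure.} It is worth recording the two inclusions separately. The inclusion of the union in $\sigma\sop\tau$ is the pointwise envelope property, Theorem~\ref{thm:sign-envelope-inclusion}, taken over all admissible pairs. The reverse inclusion uses the explicit witnesses built in the proof of Theorem~\ref{thm:sign-realization}, such as $(\Lambda,2)\op(\Lambda,1)$ and $(\Lambda,1)\op(\Lambda,1)$ for the $\Lambda$--$\Lambda$ case.

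\textbf{Expected difficulty.} I do not expect a genuine obstacle. The only care needed is that each sign class $\sgn^{-1}(\sigma)$ is nonempty, so the union is indexed over a nonempty family. For $\sigma=0$ this class is $\{0\}$; for $\sigma\neq 0$ it is $H_\sigma$. With that observed, the identification in Step~1 is exact and no further computation is required.
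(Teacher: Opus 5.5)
Your proposal is correct and follows the paper's proof: the paper also identifies the union with $R(\sigma,\tau)$ directly from Definition~\ref{def:reachable-sign-set} and then cites Theorem~\ref{thm:sign-realization} to get $R(\sigma,\tau)=\sigma\sop\tau$. Your nonemptiness caveat is harmless but not needed for Step~1, since the union equals $R(\sigma,\tau)$ by unwinding definitions regardless; nonemptiness of the sign classes only matters for the realization direction, which the theorem already supplies.
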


\begin{proof}
The left-hand side is precisely $R(\sigma,\tau)$ by
Definition~\ref{def:reachable-sign-set}, and the right-hand side is
$\sigma\sop\tau$.  The equality follows from
Theorem~\ref{thm:sign-realization}.
\end{proof}

\begin{remark}[Summary of the sign-level picture]
\label{rem:sign-images-summary}
The results of this subsection can be summarized as follows:
\begin{itemize}[leftmargin=2em]
  \item For each fixed pair $x,y\in\Hnum$, the sign image
        $E_{\mathrm{sign}}(x,y)$ is a \emph{subset} of the sign
        hyper-sum $\sgn(x)\sop\sgn(y)$
        (Theorem~\ref{thm:sign-envelope-inclusion}).
  \item As one varies magnitudes while keeping the signs fixed, every
        sign in $\sgn(x)\sop\sgn(y)$ occurs as the sign of some
        hyper-sum (Theorem~\ref{thm:sign-realization} and
        Corollary~\ref{cor:sign-envelope}).
  \item Thus the canonical hypergroup $(\Sset,\sop)$ is precisely the
        sign-level envelope of $(\Hnum,\op)$: it captures all possible
        sign patterns of hyper-sums in a setting where associativity
        and reversibility hold exactly, even though they fail for the
        full hypernumber system.
\end{itemize}
This justifies viewing $(\Sset,\sop)$ as the ``sign shadow'' of
$(\Hnum,\op)$, abstracting away magnitudes while retaining the full
combinatorics of sign interaction.
\end{remark}

\subsection{Envelope property}
\label{subsec:envelope-property}

The purpose of this subsection is to package the results of
Sections~\ref{subsec:sign-hypergroup} and~\ref{subsec:sign-images}
into a conceptual ``envelope'' statement: the sign hypergroup
$(\Sset,\sop)$ is the canonical sign--level shadow of the additive
hypermagma $(\Hnum,\op)$.

\medskip

We first introduce a general notion of a \emph{lax homomorphism} of
hypermagmas, and then specialize to the sign projection
$\sgn:\Hnum\to\Sset$.

\begin{definition}[Lax hypermagma morphism]
\label{def:lax-morphism}
Let $(M,\odot)$ and $(N,\boxplus)$ be commutative hypermagmas.  A map
$f:M\to N$ is called a \emph{lax homomorphism of hypermagmas} if for
all $x,y\in M$,
\[
  f(x\odot y)
  \;\subseteq\;
  f(x)\boxplus f(y),
\]
where
\[
  f(x\odot y) := \{f(z):z\in x\odot y\}.
\]
\end{definition}

In other words, $f$ sends each hyper-sum $x\odot y$ into the
hyper-sum of their images, but not necessarily onto it.  Equality
would give an honest homomorphism; in our setting we only have an
inclusion.

\begin{definition}[Sign envelope for a hypermagma]
\label{def:sign-envelope-general}
Let $(H,\boxplus)$ be a commutative hypermagma.  A triple
$(S,\oplus_S,\pi)$, where $(S,\oplus_S)$ is a canonical commutative
hypergroup and $\pi:H\to S$ is a map, is called a
\emph{sign envelope} for $(H,\boxplus)$ if:
\begin{enumerate}
  \item \textbf{Lax morphism condition.}
  $\pi$ is a lax hypermagma morphism from $(H,\boxplus)$ to
  $(S,\oplus_S)$, i.e.
  \[
    \pi(x\boxplus y)
    \subseteq
    \pi(x)\oplus_S\pi(y)
    \qquad\text{for all }x,y\in H.
  \]
  \item \textbf{Realization of all sign-level sums.}
  For every $\sigma,\tau\in S$ and every $\rho\in\sigma\oplus_S\tau$,
  there exist $x,y,z\in H$ with
  \[
    \pi(x)=\sigma,\quad\pi(y)=\tau,\quad z\in x\boxplus y,
    \quad\pi(z)=\rho.
  \]
\end{enumerate}
In this situation we also say that $(S,\oplus_S)$ is the
\emph{sign envelope} of $(H,\boxplus)$ with respect to $\pi$.
\end{definition}

Specializing to our setting, we take $(H,\boxplus)=(\Hnum,\op)$,
$(S,\oplus_S)=(\Sset,\sop)$ and $\pi=\sgn$.

Recall from Definition~\ref{def:sign-image} that for $x,y\in\Hnum$ we
defined the \emph{sign image} of their hyper-sum by
\[
  E_{\mathrm{sign}}(x,y)
  := \{\sgn(z):z\in x\op y\}\subset\Sset,
\]
and from Definition~\ref{def:reachable-sign-set} that for
$\sigma,\tau\in\Sset$ we set
\[
  R(\sigma,\tau)
  := \{\rho\in\Sset : \exists\,x,y,z\in\Hnum
      \text{ with } \sgn(x)=\sigma,\ \sgn(y)=\tau,\ 
      z\in x\op y,\ \sgn(z)=\rho\}.
\]

The key sign-level results from Section~\ref{subsec:sign-images} can
now be rephrased in the envelope language.

\begin{lemma}[Lax morphism property of $\sgn$]
\label{lem:sgn-lax-morphism}
The sign map $\sgn:\Hnum\to\Sset$ is a lax hypermagma morphism from
$(\Hnum,\op)$ to $(\Sset,\sop)$.  Equivalently, for all
$x,y\in\Hnum$,
\[
  E_{\mathrm{sign}}(x,y)
  \subseteq
  \sgn(x)\sop\sgn(y).
\]
\end{lemma}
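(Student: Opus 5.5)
The plan is to reduce the lemma to Theorem~\ref{thm:sign-envelope-inclusion}, which was proved by exhaustive case analysis. First, unwind Definition~\ref{def:lax-morphism} with $f=\sgn$, $(M,\odot)=(\Hnum,\op)$ and $(N,\boxplus)=(\Sset,\sop)$. The image set $\sgn(x\op y)=\{\sgn(z):z\in x\op y\}$ is by Definition~\ref{def:sign-image} literally $E_{\mathrm{sign}}(x,y)$. So the lax morphism condition $\sgn(x\op y)\subseteq\sgn(x)\sop\sgn(y)$ is the displayed inclusion word for word, and the asserted equivalence is just this identification of sets.

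Next, record that both source and target are commutative hypermagmas, as Definition~\ref{def:lax-morphism} requires. For $(\Hnum,\op)$ this is Lemma~\ref{lem:H-hyperadd-basic}. For $(\Sset,\sop)$ it is Lemma~\ref{lem:S-basic}, or even Theorem~\ref{thm:S-canonical-hypergroup}. The inclusion itself is then exactly Theorem~\ref{thm:sign-envelope-inclusion}. If a self-contained argument is preferred, I would rerun its four cases: a zero summand; two real signs, where clause (3) gives a single sign in $\{+,0,-\}=+\sop-$; mixed real/$\Lambda$, which gives $\{\Lambda\}$; and $\Lambda$--$\Lambda$, where any output lies in $\Sset=\Lambda\sop\Lambda$.

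The only point needing care is the $\Lambda$--$\Lambda$ case. The proof of Theorem~\ref{thm:sign-envelope-inclusion} misquotes clause (5): for $a\neq b$ the actual hyper-sum contains both $(+,|a-b|)$ and $(-,|a-b|)$, not just one of them. This does not affect the conclusion, because $\Lambda\sop\Lambda$ is all of $\Sset$, so every sign produced lies in it. I would state this explicitly rather than rely on the misstated subcase. No other obstacle is expected.
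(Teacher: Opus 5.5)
Your proposal is correct and follows the paper's proof: the paper likewise notes that $\sgn(x\op y)=E_{\mathrm{sign}}(x,y)$ by definition and then cites Theorem~\ref{thm:sign-envelope-inclusion}. Your remark about Case~4 of that theorem's proof is also accurate — it misquotes clause~(5), since for $a\neq b$ the sum contains both $(+,|a-b|)$ and $(-,|a-b|)$ — and, as you say, this is harmless because $\Lambda\sop\Lambda=\Sset$.
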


\begin{proof}
This is exactly Theorem~\ref{thm:sign-envelope-inclusion}, rewritten in
the language of Definition~\ref{def:lax-morphism}.  By definition,
\[
  \sgn(x\op y) = E_{\mathrm{sign}}(x,y),
\]
and Theorem~\ref{thm:sign-envelope-inclusion} asserts that
\[
  E_{\mathrm{sign}}(x,y)
  \subseteq \sgn(x)\sop\sgn(y),
\]
which is precisely the lax morphism condition.
\end{proof}

\begin{lemma}[Realization of all sign-level sums]
\label{lem:sign-realization-again}
For all $\sigma,\tau\in\Sset$ one has
\[
  R(\sigma,\tau)
  = \sigma\sop\tau.
\]
In particular, every $\rho\in\sigma\sop\tau$ occurs as the sign of
some hyper-sum $x\op y$ with $\sgn(x)=\sigma$ and $\sgn(y)=\tau$.
\end{lemma}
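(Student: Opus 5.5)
This lemma restates Theorem~\ref{thm:sign-realization} in envelope language, so the natural plan is to invoke that theorem directly. We should still indicate how the two inclusions arise. The inclusion $R(\sigma,\tau)\subseteq\sigma\sop\tau$ follows from Lemma~\ref{lem:sgn-lax-morphism}, i.e.\ Theorem~\ref{thm:sign-envelope-inclusion}. Indeed, if $\rho\in R(\sigma,\tau)$, pick witnesses $x,y,z$ with $\sgn(x)=\sigma$, $\sgn(y)=\tau$, $z\in x\op y$ and $\sgn(z)=\rho$. Then $\rho\in E_{\mathrm{sign}}(x,y)\subseteq\sgn(x)\sop\sgn(y)=\sigma\sop\tau$.

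For the reverse inclusion, I will go through the Cayley table (Table~\ref{tab:sign-hyperop}) entry by entry and exhibit explicit witnesses, as in the proof of Theorem~\ref{thm:sign-realization}.
\begin{itemize}
\item If $\sigma=0$ or $\tau=0$, take $x=0$ and any $y$ of sign $\tau$, using clause~(1) of Definition~\ref{def:H-hyperadd}. For $\tau=0$ the only choice is $y=0$; this works because $0\op0=\{0\}$ and $0\sop0=\{0\}$.
\item For $\sigma=\tau\in\{+,-\}$, take $(\sigma,1)\op(\sigma,1)=\{(\sigma,2)\}$.
\item For $\{\sigma,\tau\}=\{+,-\}$, use the magnitude pairs $(2,1)$, $(1,1)$ and $(1,2)$ to obtain the signs $+$, $0$ and $-$ respectively.
\item For one $\Lambda$ together with $\pm$, clause~(4) gives $\Lambda$.
\item For $(\Lambda,\Lambda)$, the sum $(\Lambda,2)\op(\Lambda,1)$ realizes $\Lambda$, $+$ and $-$, and $(\Lambda,1)\op(\Lambda,1)$ realizes $0$.
\end{itemize}

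No real obstacle is expected. The only subtlety is bookkeeping: the cases must cover every ordered pair $(\sigma,\tau)\in\Sset^2$, including the degenerate pair $(0,0)$, and commutativity of both $\op$ and $\sop$ (Lemmas~\ref{lem:H-hyperadd-basic} and~\ref{lem:S-basic}) reduces the ordered pairs to unordered ones. The ``in particular'' clause is then just the unwinding of Definition~\ref{def:reachable-sign-set}.
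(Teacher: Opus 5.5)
Your proposal is correct and takes the same approach as the paper, which proves this lemma simply by citing Theorem~\ref{thm:sign-realization} together with Corollary~\ref{cor:sign-envelope}. Your re-derivation matches the argument behind that citation: Theorem~\ref{thm:sign-envelope-inclusion} gives $R(\sigma,\tau)\subseteq\sigma\sop\tau$, and the converse uses the same explicit witnesses, including $(\Lambda,2)\op(\Lambda,1)$ and $(\Lambda,1)\op(\Lambda,1)$, that the paper uses in proving Theorem~\ref{thm:sign-realization}.
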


\begin{proof}
This is precisely Theorem~\ref{thm:sign-realization} together with
Corollary~\ref{cor:sign-envelope}.  By definition,
$R(\sigma,\tau)$ is the union of all sign images
$E_{\mathrm{sign}}(x,y)$ with $\sgn(x)=\sigma$, $\sgn(y)=\tau$, and
Theorem~\ref{thm:sign-realization} shows that
$R(\sigma,\tau)=\sigma\sop\tau$.
\end{proof}

We can now state the envelope property for our three--sign hypernumber
system.

\begin{theorem}[Sign envelope property for $(\Hnum,\op)$]
\label{thm:envelope-property}
Let $(\Hnum,\op)$ be the three--sign hypernumber system and
$(\Sset,\sop)$ the sign hypergroup from
Section~\ref{subsec:sign-hypergroup}, with sign map
$\sgn:\Hnum\to\Sset$.  Then $(\Sset,\sop,\sgn)$ is a sign envelope for
$(\Hnum,\op)$ in the sense of Definition~\ref{def:sign-envelope-general}.
More explicitly:
\begin{enumerate}
  \item For all $x,y\in\Hnum$, the sign image of their hyper-sum is
        contained in the sign hyper-sum of their individual signs:
  \[
    E_{\mathrm{sign}}(x,y)
    = \sgn(x\op y)
    \subseteq
    \sgn(x)\sop\sgn(y).
  \]
  \item For all $\sigma,\tau\in\Sset$ and all
        $\rho\in\sigma\sop\tau$, there exist $x,y,z\in\Hnum$ with
  \[
    \sgn(x)=\sigma,\quad \sgn(y)=\tau,\quad z\in x\op y,\quad
    \sgn(z)=\rho.
  \]
\end{enumerate}
\end{theorem}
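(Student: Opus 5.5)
The proof assembles results already established; it needs no new computation. Definition~\ref{def:sign-envelope-general} has three requirements: $(\Sset,\sop)$ must be a canonical commutative hypergroup, $\sgn$ must be a lax morphism, and every sign-level sum must be realized. I will check each in turn by citing the earlier statements.

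First, the target $(\Sset,\sop)$ is a canonical commutative hypergroup by Theorem~\ref{thm:S-canonical-hypergroup}. Its associativity and reversibility were verified there by finite case analysis. The definition also requires $(\Hnum,\op)$ to be a commutative hypermagma, which is Proposition~\ref{prop:H-add-hypermagma}.

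Second, item (1) of the statement is the lax morphism condition. By definition, $\sgn(x\op y)=\{\sgn(z):z\in x\op y\}=E_{\mathrm{sign}}(x,y)$, so the equality in item (1) holds trivially. The inclusion $E_{\mathrm{sign}}(x,y)\subseteq\sgn(x)\sop\sgn(y)$ is Lemma~\ref{lem:sgn-lax-morphism}, which in turn is Theorem~\ref{thm:sign-envelope-inclusion}.

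Third, item (2) is the realization condition. Lemma~\ref{lem:sign-realization-again}, that is Theorem~\ref{thm:sign-realization}, gives $R(\sigma,\tau)=\sigma\sop\tau$. Unwinding Definition~\ref{def:reachable-sign-set}, the inclusion $\sigma\sop\tau\subseteq R(\sigma,\tau)$ says precisely that for each $\rho\in\sigma\sop\tau$ there exist $x,y,z$ with $\sgn(x)=\sigma$, $\sgn(y)=\tau$, $z\in x\op y$ and $\sgn(z)=\rho$. One edge case needs a comment: $\sigma=0$ or $\tau=0$. The fibre of $\sgn$ over $0$ is $\{0\}$, so $x=0$ is forced, and then $z=y$ realizes $\rho=\tau$. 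This is Case~1 of the realization proof.

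The main point of care is consistency of the Case~4 computations with Definition~\ref{def:H-hyperadd}(5). For $a\ne b$, $(\Lambda,a)\op(\Lambda,b)$ contains both $(+,|a-b|)$ and $(-,|a-b|)$, and this is what the realization argument uses. The displayed description in the proof of Theorem~\ref{thm:sign-envelope-inclusion} lists only one of them, but that slip does not affect the conclusion, because both signs lie in $\Lambda\sop\Lambda=\Sset$. I would note this explicitly. With the three conditions verified, $(\Sset,\sop,\sgn)$ is a sign envelope, and the theorem follows.
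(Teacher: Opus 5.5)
Your proposal is correct and follows the paper's proof, which simply cites Lemma~\ref{lem:sgn-lax-morphism} for item (1) and Lemma~\ref{lem:sign-realization-again} for item (2); the paper leaves implicit the appeal to Theorem~\ref{thm:S-canonical-hypergroup} for the canonical-hypergroup requirement, which you state explicitly. Your remark on Case~4 is also accurate: for $a\ne b$, clause (5) yields both $(+,|a-b|)$ and $(-,|a-b|)$, and the inclusion is unaffected because $\Lambda\sop\Lambda=\Sset$.
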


\begin{proof}
Item (1) is Lemma~\ref{lem:sgn-lax-morphism}, and item (2) is
Lemma~\ref{lem:sign-realization-again}.  These are exactly the two
conditions of Definition~\ref{def:sign-envelope-general} specialized
to $(\Hnum,\op)$, $(\Sset,\sop)$, and $\sgn$.
\end{proof}

It is often convenient to view the envelope property in terms of the
reachable sign sets $R(\sigma,\tau)$.

\begin{proposition}[Envelope equality for reachable sign sets]
\label{prop:envelope-R}
For all $\sigma,\tau\in\Sset$,
\[
  R(\sigma,\tau)
  = \sigma\sop\tau,
\]
and for all $x,y\in\Hnum$,
\[
  E_{\mathrm{sign}}(x,y)
  \subseteq R(\sgn(x),\sgn(y)).
\]
Consequently, the map
\[
  (\sigma,\tau)\longmapsto R(\sigma,\tau)
\]
is exactly the sign hyperoperation $\sop$ on $\Sset$.
\end{proposition}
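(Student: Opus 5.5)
The proposition repackages results already proved, so the plan is to derive its three assertions in turn from Theorem~\ref{thm:sign-realization} (equivalently Lemma~\ref{lem:sign-realization-again}) and Theorem~\ref{thm:sign-envelope-inclusion}, with no new case analysis.

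\textbf{The equality $R(\sigma,\tau)=\sigma\sop\tau$.} This is verbatim Lemma~\ref{lem:sign-realization-again}, and I would simply cite it. For the record, its two inclusions come from the earlier results as follows. The inclusion $\subseteq$ follows from Theorem~\ref{thm:sign-envelope-inclusion} by taking the union of $E_{\mathrm{sign}}(x,y)$ over all $x,y$ with $\sgn(x)=\sigma$ and $\sgn(y)=\tau$. The inclusion $\supseteq$ is the explicit realization by chosen magnitudes in the proof of Theorem~\ref{thm:sign-realization}.

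\textbf{The inclusion $E_{\mathrm{sign}}(x,y)\subseteq R(\sgn(x),\sgn(y))$.} I would argue directly from Definition~\ref{def:reachable-sign-set}. Fix $x,y\in\Hnum$ and let $\rho\in E_{\mathrm{sign}}(x,y)$. Then there is some $z\in x\op y$ with $\sgn(z)=\rho$. The triple $(x,y,z)$ itself witnesses the existential condition defining $R(\sgn(x),\sgn(y))$, so $\rho$ lies in that set. This step does not even need the equality of the first part. Alternatively, one could combine Theorem~\ref{thm:sign-envelope-inclusion} with the first part, but the direct argument is cleaner.

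\textbf{Identification of the map with $\sop$.} The final sentence says that the assignment $(\sigma,\tau)\mapsto R(\sigma,\tau)$, viewed as a hyperoperation $\Sset\times\Sset\to\mathcal P(\Sset)\setminus\{\emptyset\}$, coincides with $\sop$. This is just the first part read pointwise: two maps into $\mathcal P(\Sset)$ that agree at every pair are equal. Nonemptiness of $R(\sigma,\tau)$ is inherited from $\sop$ being nonempty-valued, or directly from each fibre $\sgn^{-1}(\sigma)$ being nonempty.

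There is no genuine obstacle. The only point needing care is the bookkeeping in the second part: $R$ is defined with existential quantifiers over \emph{all} elements of the given signs, so any particular pair is automatically a witness.
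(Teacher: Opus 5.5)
Your proposal is correct and follows essentially the same route as the paper. The equality is cited from Lemma~\ref{lem:sign-realization-again}, and the inclusion $E_{\mathrm{sign}}(x,y)\subseteq R(\sgn(x),\sgn(y))$ is read off directly from Definition~\ref{def:reachable-sign-set}: your witness $(x,y,z)$ is exactly the paper's observation that $E_{\mathrm{sign}}(x,y)$ is one of the sets in the union defining $R$. The final identification is, as in the paper, just the pointwise reading of that equality.
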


\begin{proof}
The equality $R(\sigma,\tau)=\sigma\sop\tau$ is again
Lemma~\ref{lem:sign-realization-again}.  By definition,
$E_{\mathrm{sign}}(x,y)$ is one of the sets appearing in the union
defining $R(\sgn(x),\sgn(y))$, so
$E_{\mathrm{sign}}(x,y)\subseteq R(\sgn(x),\sgn(y))$.  The last
statement follows by identifying $R(\sigma,\tau)$ with
$\sigma\sop\tau$.
\end{proof}

\begin{remark}[Conceptual summary of the envelope property]
\label{rem:envelope-summary}
Theorem~\ref{thm:envelope-property} and
Proposition~\ref{prop:envelope-R} can be summarized informally as:
\begin{quote}
  The sign hypergroup $(\Sset,\sop)$ is the smallest canonical
  commutative hypergroup that records all sign patterns of hyper-sums
  in $(\Hnum,\op)$, in a way that is compatible with the hyperaddition.
\end{quote}
More precisely:
\begin{itemize}[leftmargin=2em]
  \item The sign map $\sgn:\Hnum\to\Sset$ is a lax morphism of
        commutative hypermagmas, so the hyperaddition on $\Hnum$
        always projects into the hyperaddition on $\Sset$.
  \item The hyperoperation $\sop$ on $\Sset$ is \emph{sharp}: every
        sign that is allowed at the sign level for a pair
        $(\sigma,\tau)$ actually occurs as the sign of some hyper-sum
        of elements with signs $\sigma$ and $\tau$.
  \item The nonassociativity of $\op$ is therefore entirely a matter
        of how magnitudes interact; at the sign level, the envelope
        $(\Sset,\sop)$ is a fully associative canonical hypergroup
        (Theorem~\ref{thm:S-canonical-hypergroup}).
\end{itemize}
This perspective will be used in later sections to organize the
interaction between $(\Hnum,\op,\cdot)$ and various ambient
constructions (such as the cancellation monoid) via compatible
projections to sign and magnitude layers.
\end{remark}


\section{Ambient cancellation structure}
\label{sec:ambient}


\subsection{Cancellation mass and the ambient monoid}
\label{subsec:ambient-monoid}

In this section we isolate the real--valued \emph{cancellation mass}
that already appeared in our analysis of associativity defects, and use
it to build a simple commutative monoid $(\Kamb,\oplus)$ which serves
as an ambient ``cancellation recorder'' for the real field.  This
structure will later be related to the hyperaddition on $\Hnum$ by a
suitable interpretation map.

\medskip

We begin by recalling the basic notion of cancellation mass.

\begin{definition}[Cancellation mass]\label{def:cancellation-mass}
Let $r_1,r_2\in\R$.  The \emph{cancellation mass} of the pair
$(r_1,r_2)$ is the nonnegative real number
\[
  \Cfun(r_1,r_2)
  := |r_1| + |r_2| - |r_1 + r_2|.
\]
\end{definition}

Intuitively, $\Cfun(r_1,r_2)$ measures the amount of absolute value
lost when adding $r_1$ and $r_2$ in $\R$; it vanishes exactly when
there is no cancellation between the two numbers.

\begin{lemma}[Basic properties of $\Cfun$]\label{lem:C-basic}
For all $r_1,r_2\in\R$ the cancellation mass $\Cfun$ satisfies:
\begin{enumerate}
  \item \textbf{Nonnegativity:}
  \[
    \Cfun(r_1,r_2)\ge 0.
  \]
  \item \textbf{Symmetry:}
  \[
    \Cfun(r_1,r_2) = \Cfun(r_2,r_1).
  \]
  \item \textbf{Homogeneity:} For all $\lambda\in\R$,
  \[
    \Cfun(\lambda r_1,\lambda r_2)
    = |\lambda|\,\Cfun(r_1,r_2).
  \]
  \item \textbf{Vanishing and sign pattern.}
        One has $\Cfun(r_1,r_2)=0$ if and only if either
        $r_1=0$ or $r_2=0$, or $r_1,r_2$ have the same sign.
  \item \textbf{Opposite--sign case.}
        If $a,b\ge 0$, then
        \[
          \Cfun(a,-b) = a+b-|a-b| = 2\min(a,b).
        \]
\end{enumerate}
\end{lemma}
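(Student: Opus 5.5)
The plan is to reduce each item to elementary facts about the absolute value already recorded in Lemma~\ref{lem:basic-abs}, together with results proved earlier. Items (1) and (2) need no new argument. They are exactly Lemma~\ref{lem:basic-C}, since Definition~\ref{def:cancellation-mass} is literally the same formula as Definition~\ref{def:basic-C}.

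For (3), I will apply the multiplicativity $|xy|=|x|\,|y|$ from Lemma~\ref{lem:basic-abs}(2) three times:
\[
  \Cfun(\lambda r_1,\lambda r_2)=|\lambda||r_1|+|\lambda||r_2|-|\lambda(r_1+r_2)|=|\lambda|\,\Cfun(r_1,r_2).
\]
This works for all $\lambda\in\R$, including $\lambda=0$, where both sides are $0$.

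For (5), with $a,b\ge0$ we have $|a|=a$ and $|-b|=b$, so $\Cfun(a,-b)=a+b-|a-b|$. The identity $a+b-|a-b|=2\min(a,b)$ for $a,b\ge0$ is already established in Proposition~\ref{prop:assoc-defect-2min}, by splitting into $a\ge b$ and $a<b$. I will simply cite it.

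The only item with real content is (4). The phrase ``same sign'' should be read as $\operatorname{sgn}_\R(r_1)=\operatorname{sgn}_\R(r_2)\in\{+,-\}$. With that reading, the three alternatives together say exactly that $r_1r_2\ge0$, so I will prove that $\Cfun(r_1,r_2)=0$ if and only if $r_1r_2\ge0$.

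\emph{If direction.} Suppose $r_1r_2\ge0$. Then $r_1$ and $r_2$ lie in the same closed half-line, say both $\ge0$, and $|r_1+r_2|=r_1+r_2=|r_1|+|r_2|$. The case where both are $\le0$ is the same after multiplying by $-1$, using (3) with $\lambda=-1$.

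\emph{Only-if direction.} Suppose $r_1r_2<0$. Using symmetry (2) and homogeneity (3) with $\lambda=-1$, I can reduce to $r_1=a>0$ and $r_2=-b<0$. Then (5) gives $\Cfun(a,-b)=2\min(a,b)>0$, so $\Cfun\neq0$.

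I expect no genuine obstacle here. The only points needing care are interpreting ``same sign'' so that it excludes zero entries, which are handled separately by the clause ``$r_1=0$ or $r_2=0$'', and organizing (4) via (2), (3) and (5) so that it avoids a four-way case split.
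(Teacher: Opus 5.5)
Your proof is correct. Items (1), (2), (3) and (5) follow the paper's argument. The only difference there is that you cite Lemma~\ref{lem:basic-C} and Proposition~\ref{prop:assoc-defect-2min}, where the paper redoes the triangle-inequality step and the $a\ge b$ / $a<b$ split; this is harmless, since both results precede this lemma and do not depend on it.

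The genuine difference is in the converse of (4).
\begin{itemize}
  \item The paper assumes $\Cfun(r_1,r_2)=0$ with $r_1,r_2\neq0$, deduces $|r_1+r_2|=|r_1|+|r_2|$, and appeals to the equality case of the triangle inequality.
  \item You prove the contrapositive instead. Using (2) and (3) with $\lambda=-1$, you reduce to $\Cfun(a,-b)$ with $a,b>0$, and read off $2\min(a,b)>0$ from (5).
\end{itemize}

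The paper's route is shorter. However, it relies on the equality case of the triangle inequality, which Lemma~\ref{lem:basic-abs} neither states nor proves. Your route is self-contained relative to what the paper has established: it effectively proves that equality case via the explicit formula. The cost is small: you must order the items so that (3) and (5) are available before (4), and neither of those depends on (4).

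Your reformulation of the three alternatives as the single condition $r_1r_2\ge0$ is also a good move. It settles whether ``same sign'' is meant to include zero, and it makes both directions of (4) clean.
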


\begin{proof}
(1) By the triangle inequality,
\[
  |r_1 + r_2|
  \le |r_1| + |r_2|,
\]
so $|r_1|+|r_2|-|r_1+r_2|\ge 0$.

(2) Symmetry is immediate from the formula, since $|r_1|+|r_2|$ and
$|r_1+r_2|$ are symmetric in $(r_1,r_2)$.

(3) For $\lambda\in\R$,
\[
  \Cfun(\lambda r_1,\lambda r_2)
  = |\lambda r_1| + |\lambda r_2| - |\lambda r_1 + \lambda r_2|
  = |\lambda|\bigl(|r_1| + |r_2| - |r_1+r_2|\bigr)
  = |\lambda|\,\Cfun(r_1,r_2).
\]

(4) If $r_1=0$ or $r_2=0$, then
$\Cfun(r_1,r_2)=|r_1|+|r_2|-|r_1+r_2|=0$.  If $r_1,r_2$ have the same
sign, then $|r_1+r_2|=|r_1|+|r_2|$, so again $\Cfun(r_1,r_2)=0$.
Conversely, if $\Cfun(r_1,r_2)=0$ and neither $r_1$ nor $r_2$ is $0$,
then $|r_1+r_2|=|r_1|+|r_2|$.  By the equality case in the triangle
inequality, this implies that $r_1,r_2$ have the same sign.

(5) Let $a,b\ge 0$.  Then
\[
  \Cfun(a,-b)
  = |a|+|-b|-|a-b|
  = a+b-|a-b|.
\]
If $a\ge b$, then $|a-b|=a-b$ and
\[
  a+b-|a-b|
  = a+b-(a-b)
  = 2b
  = 2\min(a,b).
\]
If $a<b$, then $|a-b|=b-a$ and
\[
  a+b-|a-b|
  = a+b-(b-a)
  = 2a
  = 2\min(a,b).
\]
Thus $\Cfun(a,-b)=2\min(a,b)$.
\end{proof}

\begin{remark}[Cancellation mass and associativity defect]
\label{rem:C-defect}
Lemma~\ref{lem:C-basic} shows that $\Cfun$ exactly recovers the
associativity defect for ordered $(+,-,\Lambda)$--triples analysed in
Theorem~\ref{thm:defect-summary}: in that setting,
\[
  \AssocCurv(a,b,c)
  = \Cfun(a,-b)
  = 2\min(a,b).
\]
We will systematically encode this quantity as a second coordinate in
an ambient monoid.
\end{remark}

We now introduce the ambient object that records both the ordinary
real sum and the total cancellation mass.

\begin{definition}[Ambient cancellation monoid]\label{def:ambient-monoid}
Let
\[
  \Kamb := \R\times\R_{\ge 0}.
\]
We define a binary operation
\[
  \oplus : \Kamb\times\Kamb \longrightarrow \Kamb
\]
by
\[
  (x,c)\oplus(y,d)
  := \bigl(x+y,\; c+d+\Cfun(x,y)\bigr),
\]
and set
\[
  \mathbf{0} := (0,0)\in\Kamb.
\]
\end{definition}

The heuristic is that the first coordinate records the usual real sum,
while the second coordinate accumulates all cancellation mass generated
by additions performed so far.

\begin{lemma}[Basic properties of $\oplus$]\label{lem:oplus-basic}
The operation $\oplus$ is well-defined and commutative, and
$\mathbf{0}=(0,0)$ is a neutral element:
\[
  \mathbf{0}\oplus(x,c) = (x,c)\oplus\mathbf{0} = (x,c)
  \quad\text{for all }(x,c)\in\Kamb.
\]
\end{lemma}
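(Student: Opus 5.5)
The proof is a direct verification from Definition~\ref{def:ambient-monoid}, using only Lemma~\ref{lem:C-basic}. I will check three things in turn: well-definedness, commutativity, and neutrality of $\mathbf{0}$.

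\emph{Well-definedness.} Take $(x,c),(y,d)\in\Kamb$, so $x,y\in\R$ and $c,d\ge0$. The first coordinate $x+y$ is a real number. For the second coordinate, Lemma~\ref{lem:C-basic}(1) gives $\Cfun(x,y)\ge0$. Hence
\[
  c+d+\Cfun(x,y)\ge0,
\]
so $(x,c)\oplus(y,d)$ lies in $\R\times\R_{\ge0}=\Kamb$. No choices enter the formula, so the value is uniquely determined.

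\emph{Commutativity.} Addition in $\R$ gives $x+y=y+x$ and $c+d=d+c$. Lemma~\ref{lem:C-basic}(2) gives $\Cfun(x,y)=\Cfun(y,x)$. Therefore the two coordinates of $(x,c)\oplus(y,d)$ and of $(y,d)\oplus(x,c)$ agree.

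\emph{Neutral element.} First, $\Cfun(0,x)=|0|+|x|-|x|=0$; this is also the vanishing case of Lemma~\ref{lem:C-basic}(4). Hence
\[
  \mathbf{0}\oplus(x,c)=(0+x,\;0+c+0)=(x,c).
\]
The identity $(x,c)\oplus\mathbf{0}=(x,c)$ then follows from commutativity, or from the same computation.

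None of these steps is a real obstacle. The only point needing care is checking that the second coordinate stays in $\R_{\ge0}$, which is exactly the triangle inequality packaged in Lemma~\ref{lem:C-basic}(1). Associativity is not claimed in this lemma, so I do not address it here.
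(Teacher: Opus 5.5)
Your proof is correct and follows the paper's argument step for step: nonnegativity of $\Cfun$ gives well-definedness, symmetry of $\Cfun$ gives commutativity, and $\Cfun(0,x)=0$ gives neutrality of $\mathbf{0}$. Your neutral-element computation $(0+x,\,0+c+0)=(x,c)$ is actually cleaner than the paper's, which has a typo writing $(0,c)\oplus(x,c)$ where $(0,0)\oplus(x,c)$ is meant.
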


\begin{proof}
For $(x,c),(y,d)\in\Kamb$, the pair $(x+y,c+d+\Cfun(x,y))$ clearly
lies in $\R\times\R_{\ge0}$, since $c,d\ge0$ and $\Cfun(x,y)\ge0$ by
Lemma~\ref{lem:C-basic}(1).  Thus $\oplus$ is well-defined.

Commutativity follows from commutativity of $+$ in $\R$ and symmetry
of $\Cfun$:
\[
  (x,c)\oplus(y,d)
  = (x+y,c+d+\Cfun(x,y))
  = (y+x,d+c+\Cfun(y,x))
  = (y,d)\oplus(x,c).
\]

For the neutral element, note that
\[
  \Cfun(0,x) = |0|+|x|-|0+x| = |x|-|x| = 0,
\]
so
\[
  \mathbf{0}\oplus(x,c)
  = (0,c)\oplus(x,c)
  = (x,c+0+\Cfun(0,x))
  = (x,c),
\]
and similarly $(x,c)\oplus\mathbf{0}=(x,c)$.
\end{proof}

The key property is associativity, which is encoded in a simple
``cocycle'' identity satisfied by $\Cfun$.

\begin{lemma}[Cocycle identity for $\Cfun$]\label{lem:C-cocycle}
For all $x,y,z\in\R$ one has
\[
  \Cfun(x,y)+\Cfun(x+y,z)
  = \Cfun(y,z)+\Cfun(x,y+z).
\]
\end{lemma}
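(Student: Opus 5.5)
The plan is to expand both sides using the definition $\Cfun(r_1,r_2)=|r_1|+|r_2|-|r_1+r_2|$ and observe that everything telescopes, so no case analysis is needed. On the left-hand side,
\[
  \Cfun(x,y)+\Cfun(x+y,z)
  = \bigl(|x|+|y|-|x+y|\bigr)+\bigl(|x+y|+|z|-|x+y+z|\bigr),
\]
and the two occurrences of $|x+y|$ cancel. What remains is $|x|+|y|+|z|-|x+y+z|$. Here associativity and commutativity of addition in $\R$ let us write $(x+y)+z$ simply as $x+y+z$.

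The right-hand side is handled the same way:
\[
  \Cfun(y,z)+\Cfun(x,y+z)
  = \bigl(|y|+|z|-|y+z|\bigr)+\bigl(|x|+|y+z|-|x+y+z|\bigr).
\]
Now the intermediate term $|y+z|$ cancels, and we are left with the same symmetric expression $|x|+|y|+|z|-|x+y+z|$. Comparing the two expressions gives the identity.

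The conceptual point worth stating is that $\Cfun$ is a coboundary. If we set $\phi(r)=|r|$, then $\Cfun(r_1,r_2)=\phi(r_1)+\phi(r_2)-\phi(r_1+r_2)$, and every such function satisfies the $2$-cocycle identity. This is exactly the mechanism behind the two cancellations above.

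There is no real obstacle in this argument. The only care needed is bookkeeping: the cancelled middle term is $|x+y|$ on the left but $|y+z|$ on the right. One should also note explicitly that associativity of real addition is what identifies $|(x+y)+z|$ with $|x+(y+z)|$.
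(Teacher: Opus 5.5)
Your proof is correct and is the same argument the paper gives: you expand both sides from the definition of $\Cfun$, cancel the intermediate terms $|x+y|$ and $|y+z|$, and reduce each side to $|x|+|y|+|z|-|x+y+z|$. The paper does not include your remark that $\Cfun$ is the coboundary of $\phi(r)=|r|$, but it is a correct observation and explains why the cancellation happens.
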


\begin{proof}
Expanding both sides using Definition~\ref{def:cancellation-mass},
\begin{align*}
  \Cfun(x,y)+\Cfun(x+y,z)
  &= \bigl(|x|+|y|-|x+y|\bigr)
   + \bigl(|x+y|+|z|-|x+y+z|\bigr)\\
  &= |x|+|y|+|z|-|x+y+z|.
\end{align*}
Similarly,
\begin{align*}
  \Cfun(y,z)+\Cfun(x,y+z)
  &= \bigl(|y|+|z|-|y+z|\bigr)
   + \bigl(|x|+|y+z|-|x+y+z|\bigr)\\
  &= |x|+|y|+|z|-|x+y+z|.
\end{align*}
The two expressions coincide, proving the identity.
\end{proof}

\begin{proposition}[Associativity of $(\Kamb,\oplus)$]
\label{prop:Kamb-assoc}
The pair $(\Kamb,\oplus)$ is a commutative monoid with neutral element
$\mathbf{0}=(0,0)$, i.e.
\[
  (u\oplus v)\oplus w = u\oplus(v\oplus w)
  \quad\text{for all }u,v,w\in\Kamb.
\]
\end{proposition}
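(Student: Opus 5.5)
The plan is to compute both bracketings explicitly and reduce their equality to the cocycle identity of Lemma~\ref{lem:C-cocycle}. Commutativity and the neutral element $\mathbf{0}$ are already established in Lemma~\ref{lem:oplus-basic}, so only associativity remains.

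Fix $u=(x,c)$, $v=(y,d)$, $w=(z,e)$ in $\Kamb$. Applying Definition~\ref{def:ambient-monoid} twice gives
\[
  (u\oplus v)\oplus w
  = \bigl(x+y,\;c+d+\Cfun(x,y)\bigr)\oplus(z,e)
  = \bigl((x+y)+z,\;c+d+e+\Cfun(x,y)+\Cfun(x+y,z)\bigr).
\]
Symmetrically, first forming $v\oplus w=(y+z,\,d+e+\Cfun(y,z))$ gives
\[
  u\oplus(v\oplus w)=\bigl(x+(y+z),\;c+d+e+\Cfun(y,z)+\Cfun(x,y+z)\bigr).
\]
The first coordinates agree by associativity of addition in $\R$. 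The second coordinates agree exactly when
\[
  \Cfun(x,y)+\Cfun(x+y,z)=\Cfun(y,z)+\Cfun(x,y+z),
\]
which is precisely Lemma~\ref{lem:C-cocycle}. Both sides equal $|x|+|y|+|z|-|x+y+z|$, by a telescoping of the intermediate absolute value.

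There is no real obstacle here; all of the content sits in the cocycle identity, which is already proved. The only point to check is well-definedness: each intermediate result must lie in $\Kamb=\R\times\R_{\ge0}$, so that the second application of $\oplus$ is legitimate. This holds by Lemma~\ref{lem:C-basic}(1), or directly by Lemma~\ref{lem:oplus-basic}.
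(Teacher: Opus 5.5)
Your proposal is correct and follows essentially the same route as the paper: you expand both bracketings coordinatewise, match the first coordinates by associativity in $\R$, and reduce the second coordinates to the cocycle identity of Lemma~\ref{lem:C-cocycle}. Commutativity and the neutral element come from Lemma~\ref{lem:oplus-basic}, exactly as in the paper. Your extra remark on well-definedness of the intermediate sums is a harmless addition that the paper leaves implicit.
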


\begin{proof}
Commutativity and the identity property of $\mathbf{0}$ follow from
Lemma~\ref{lem:oplus-basic}.  It remains to show associativity.

Let $u=(x,c)$, $v=(y,d)$, $w=(z,e)\in\Kamb$.  Then
\begin{align*}
  (u\oplus v)\oplus w
  &= (x+y,\,c+d+\Cfun(x,y))\oplus(z,e)\\
  &= \bigl(x+y+z,\,
            c+d+\Cfun(x,y)+e+\Cfun(x+y,z)\bigr),
\end{align*}
while
\begin{align*}
  u\oplus(v\oplus w)
  &= (x,c)\oplus(y+z,\,d+e+\Cfun(y,z))\\
  &= \bigl(x+y+z,\,
            c+d+e+\Cfun(y,z)+\Cfun(x,y+z)\bigr).
\end{align*}
By Lemma~\ref{lem:C-cocycle},
\[
  \Cfun(x,y)+\Cfun(x+y,z)
  = \Cfun(y,z)+\Cfun(x,y+z),
\]
so the second coordinates agree.  Thus
$(u\oplus v)\oplus w=u\oplus(v\oplus w)$ for all $u,v,w$, proving that
$(\Kamb,\oplus,\mathbf{0})$ is an associative commutative monoid.
\end{proof}

The projection to the first coordinate recovers the usual additive
structure of $\R$.

\begin{lemma}[Projection to the real line]\label{lem:Kamb-projection}
Define $\pi:\Kamb\to\R$ by $\pi(x,c):=x$.  Then:
\begin{enumerate}
  \item $\pi$ is a surjective monoid homomorphism from
        $(\Kamb,\oplus)$ onto $(\R,+)$:
  \[
    \pi\bigl((x,c)\oplus(y,d)\bigr)
    = \pi(x,c)+\pi(y,d)
    \quad\text{and}\quad
    \pi(\mathbf{0})=0.
  \]
  \item The kernel of $\pi$ is
  \[
    \ker\pi = \{(0,c):c\in\R_{\ge0}\}.
  \]
\end{enumerate}
\end{lemma}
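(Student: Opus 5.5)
The plan is to verify both parts of Lemma~\ref{lem:Kamb-projection} straight from Definition~\ref{def:ambient-monoid}.

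For (1), I start with additivity. Take $(x,c),(y,d)\in\Kamb$. By definition,
\[
  (x,c)\oplus(y,d)=\bigl(x+y,\;c+d+\Cfun(x,y)\bigr).
\]
Applying $\pi$ keeps only the first coordinate, so
\[
  \pi\bigl((x,c)\oplus(y,d)\bigr)=x+y=\pi(x,c)+\pi(y,d).
\]
The second coordinate, and hence the cancellation term $\Cfun(x,y)$, is simply discarded. Next, $\pi(\mathbf{0})=\pi(0,0)=0$ holds by inspection. Together these show that $\pi$ is a homomorphism of monoids from $(\Kamb,\oplus,\mathbf{0})$ to $(\R,+,0)$; the monoid structure on $\Kamb$ is Proposition~\ref{prop:Kamb-assoc}. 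For surjectivity, given any $r\in\R$, the element $(r,0)$ lies in $\Kamb=\R\times\R_{\ge0}$ and satisfies $\pi(r,0)=r$.

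For (2), I read the kernel as the fibre $\pi^{-1}(0)$ over the neutral element. Since $\Kamb$ is only a monoid, there is no quotient interpretation to worry about. An element $(x,c)\in\Kamb$ satisfies $\pi(x,c)=0$ exactly when $x=0$, with $c\in\R_{\ge0}$ arbitrary. This gives $\ker\pi=\{(0,c):c\ge0\}$.

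If it is useful to add a remark, I can note that this fibre is closed under $\oplus$. Indeed, $\Cfun(0,0)=0$, so $(0,c)\oplus(0,d)=(0,c+d)$. Hence the kernel is a submonoid isomorphic to $(\R_{\ge0},+)$.

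None of these steps presents a real obstacle. The only point needing care is to be explicit that "kernel" means the preimage of $0$, since we are working with monoids rather than groups.
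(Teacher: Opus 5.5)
Your proposal is correct and matches the paper's proof step for step: you read off the first coordinate of $\oplus$, note $\pi(\mathbf{0})=0$, get surjectivity from $\pi(r,0)=r$, and identify the kernel as the fibre $\pi^{-1}(0)=\{(0,c):c\ge 0\}$. Your extra remark that this fibre is a submonoid isomorphic to $(\R_{\ge0},+)$, because $\Cfun(0,0)=0$, is also correct, though the lemma does not need it.
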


\begin{proof}
(1) For $(x,c),(y,d)\in\Kamb$,
\[
  \pi\bigl((x,c)\oplus(y,d)\bigr)
  = \pi\bigl(x+y,\,c+d+\Cfun(x,y)\bigr)
  = x+y
  = \pi(x,c)+\pi(y,d),
\]
and $\pi(\mathbf{0})=\pi(0,0)=0$.  Surjectivity is obvious, since for
any $r\in\R$ we have $\pi(r,0)=r$.

(2) If $(x,c)\in\ker\pi$, then $\pi(x,c)=x=0$, so
$(x,c)=(0,c)$ for some $c\ge0$.  Conversely, every $(0,c)$ satisfies
$\pi(0,c)=0$, so $(0,c)\in\ker\pi$.
\end{proof}

\begin{remark}[Interpretation of the ambient monoid]\label{rem:Kamb-intuition}
Informally, one can think of an element $(x,c)\in\Kamb$ as describing a
\emph{state} obtained by summing some real numbers:
\[
  (x,c)
  = (r_1,0)\oplus(r_2,0)\oplus\cdots\oplus(r_n,0),
\]
in which:
\begin{itemize}[leftmargin=2em]
  \item $x$ is the resulting real sum $r_1+\cdots+r_n$, and
  \item $c$ is the total cancellation mass accumulated along the way,
        i.e.\ the difference between the sum of absolute values and
        the absolute value of the final result, aggregated in an
        associative manner via the cocycle identity
        (Lemma~\ref{lem:C-cocycle}).
\end{itemize}
Because $(\Kamb,\oplus)$ is a genuine commutative monoid, the final
pair $(x,c)$ is independent of the order and bracketing of the
summands.  The projection $\pi$ forgets $c$ and recovers the ordinary
real sum; the second coordinate $c$ may be viewed as bookkeeping
information encoding ``how much cancellation happened''.  In
Section~\ref{subsec:ambient-link-H} we will construct a map from
$\Hnum$ into $\Kamb$ that sends associativity defects of the
three--sign hyperaddition to differences in the second coordinate of
$\Kamb$.
\end{remark}

\subsection{Embedding of $\Hnum$ and interpretation of the defect}
\label{subsec:ambient-link-H}

In this subsection we relate the three--sign hypernumber system
$(\Hnum,\op)$ to the ambient cancellation monoid
$(\Kamb,\oplus)$ constructed in
Section~\ref{subsec:ambient-monoid}.  The guiding idea is that each
hypernumber has a \emph{real shadow} and a \emph{cancellation reserve};
the ambient monoid records both, in a way that is fully associative,
while the hyperaddition $\op$ only partially converts cancellation
into $\Lambda$--mass.  For ordered $(+,-,\Lambda)$--triples, this
mechanism explains the associativity defect computed in
Section~\ref{subsec:assoc-defect-quantitative}.

\subsubsection*{An ambient embedding of $\Hnum$}

We first define a simple embedding of $\Hnum$ into $\Kamb$.

\begin{definition}[Real shadow and ambient embedding]\label{def:real-shadow-embedding}
Define the \emph{real shadow} map
\[
  \rho:\Hnum\longrightarrow\R
\]
by
\[
  \rho(0) := 0,\quad
  \rho(+,a) := a,\quad
  \rho(-,a) := -a,\quad
  \rho(\Lambda,a) := 0
\]
for all $a>0$.  The \emph{ambient embedding}
\[
  \iota:\Hnum\longrightarrow\Kamb=\R\times\R_{\ge0}
\]
is then defined by
\[
  \iota(x) := \bigl(\rho(x),\, c(x)\bigr),
\]
where
\[
  c(0) := 0,\quad
  c(+,a) := 0,\quad
  c(-,a) := 0,\quad
  c(\Lambda,a) := a.
\]
Equivalently,
\[
  \iota(0) = (0,0),\quad
  \iota(+,a) = (a,0),\quad
  \iota(-,a) = (-a,0),\quad
  \iota(\Lambda,a) = (0,a).
\]
\end{definition}

Thus each hypernumber is encoded as a pair consisting of a net real
value and a nonnegative ``stored cancellation mass'' carried by the
$\Lambda$--sector.

\begin{lemma}[Basic properties of $\iota$]\label{lem:iota-basic}
The map $\iota:\Hnum\to\Kamb$ has the following properties:
\begin{enumerate}
  \item $\iota$ is injective.
  \item $\rho = \pi\circ\iota$, where $\pi:\Kamb\to\R$ is the
        projection $\pi(x,c)=x$.
  \item The restriction of $\iota$ to the classical line
        $\Rc=\iemb(\R)\subset\Hnum$ is given by
  \[
    \iota(\iemb(r)) = (r,0)
    \quad\text{for all }r\in\R.
  \]
\end{enumerate}
\end{lemma}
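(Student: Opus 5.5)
The plan is a direct verification from Definition~\ref{def:real-shadow-embedding}, handled in the order (2), (3), (1). The map $\iota$ is given by an explicit four-line table, so each item reduces to comparing its values on $0$ and on each sector $H_+$, $H_-$, $H_\Lambda$.

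For (2), I will apply $\pi$ to the equivalent description of $\iota$:
\[
  \pi(\iota(0))=0,\quad \pi(\iota(+,a))=a,\quad \pi(\iota(-,a))=-a,\quad \pi(\iota(\Lambda,a))=0.
\]
These agree term by term with the definition of $\rho$, which gives $\rho=\pi\circ\iota$. For (3), I split into $r=0$, $r>0$ and $r<0$ using Definition~\ref{def:real-embedding}. For $r=0$ we have $\iota(\iemb(0))=\iota(0)=(0,0)$. For $r>0$ we have $\iota(+,r)=(r,0)$. For $r<0$ we have $\iemb(r)=(-,-r)$, so $\iota(\iemb(r))=(-(-r),0)=(r,0)$.

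For injectivity (1), I will show that the image of $\iota$ splits into disjoint pieces. The image of $\{0\}$ is $(0,0)$. The image of $\Rc\setminus\{0\}$ lies in $\{(r,0):r\neq0\}$. The image of $H_\Lambda$ lies in $\{(0,c):c>0\}$. These three subsets of $\Kamb$ are pairwise disjoint. On the first two pieces together, $\iota$ agrees with $r\mapsto(r,0)$ composed with $\iemb^{-1}$ by (3), and it is injective there because $\iemb$ is injective (Lemma~\ref{lem:embedding-basic}). On $H_\Lambda$, the relation $\iota(\Lambda,a)=\iota(\Lambda,b)$ forces $(0,a)=(0,b)$, so $a=b$.

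There is no serious obstacle here. The only point needing care is the disjointness claim in (1): a $\Lambda$-element must never collide with $0$ or with a real element. This holds because magnitudes are strictly positive, so $(0,a)$ has $a>0$ while every real element has second coordinate $0$.
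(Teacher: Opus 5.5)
Your proof is correct and follows the paper's argument: a direct case-by-case check against the defining table of $\iota$, with injectivity resting on the fact that only elements of $H_\Lambda$ have positive second coordinate. Deriving injectivity on $\Rc$ from item (3) and the injectivity of $\iemb$ is only a reorganization of the paper's direct inspection. Your use of the second coordinate to separate magnitudes within $H_\Lambda$ is stated more precisely than the paper's remark that the real shadow determines the magnitude in each sector, since $\rho$ is identically $0$ on $H_\Lambda$.
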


\begin{proof}
(1) If $\iota(x)=\iota(y)$, then $\rho(x)=\rho(y)$ and $c(x)=c(y)$.
Inspecting the four cases in Definition~\ref{def:real-shadow-embedding}
shows that these two equalities force $x=y$; for instance, the only
element with second coordinate $>0$ is of the form $(\Lambda,a)$, and
within each sign sector the real shadow uniquely determines the
magnitude.  Hence $\iota$ is injective.

(2) This is immediate from the definition: the first coordinate of
$\iota(x)$ is exactly $\rho(x)$.

(3) If $r>0$, then $\iemb(r)=(+,r)$ and $\iota(\iemb(r))=(r,0)$.  If
$r<0$, then $\iemb(r)=(-,|r|)$ and
$\iota(\iemb(r))=(-|r|,0)=(r,0)$.  Finally, $\iemb(0)=0$ and
$\iota(0)=(0,0)$.  Thus $\iota(\iemb(r))=(r,0)$ for all $r\in\R$.
\end{proof}

\begin{remark}[Ambient viewpoint on $\Hnum$]\label{rem:iota-intuition}
Under the embedding $\iota$, the classical line $\Rc$ appears as the
subset $\{(r,0):r\in\R\}\subset\Kamb$, i.e.\ states with no stored
cancellation.  Pure $\Lambda$--elements $(\Lambda,a)$ are embedded as
$(0,a)$, which can be interpreted as ``zero net real value with $a$
units of stored cancellation mass''.  The ambient monoid operation
$\oplus$ then allows us to add these states in a fully associative
way, aggregating both real shadows and cancellation masses.
\end{remark}

\subsubsection*{Ambient addition for $(+,a),(-,b),(\Lambda,c)$}

We now compute the ambient sum of the embedded triple
\[
  x=(+,a),\quad y=(-,b),\quad z=(\Lambda,c)\in\Hnum,
  \quad a,b,c>0,
\]
and compare it with the two bracketings of the hyperaddition $\op$.
Recall from Theorem~\ref{thm:defect-summary} that
\begin{align*}
  (x\op y)\op z
  &= \{(\Lambda,m_L)\},
  & m_L &= c+|a-b|,\\
  x\op(y\op z)
  &= \{(\Lambda,m_R)\},
  & m_R &= a+b+c,
\end{align*}
and that the associativity defect
\[
  \AssocCurv(a,b,c)
  := m_R-m_L
  = a+b-|a-b|
  = 2\min(a,b)
  = \Cfun(a,-b)
\]
is independent of $c$.

\begin{lemma}[Ambient sum of the embedded triple]\label{lem:ambient-sum-triple}
Let $x=(+,a)$, $y=(-,b)$, $z=(\Lambda,c)$ with $a,b,c>0$, and define
\[
  X := \iota(x),\quad Y := \iota(y),\quad Z := \iota(z)\in\Kamb.
\]
Then
\[
  X = (a,0),\quad Y=(-b,0),\quad Z=(0,c),
\]
and for any bracketing of $X\oplus Y\oplus Z$ one has
\[
  X\oplus Y\oplus Z
  = (a-b,\; c+\Cfun(a,-b)).
\]
\end{lemma}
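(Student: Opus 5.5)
The values of $X,Y,Z$ are read off directly from Definition~\ref{def:real-shadow-embedding}: $\iota(+,a)=(a,0)$, $\iota(-,b)=(-b,0)$ and $\iota(\Lambda,c)=(0,c)$. For the sum, I would use Proposition~\ref{prop:Kamb-assoc}. Since $(\Kamb,\oplus)$ is an associative commutative monoid, all bracketings and orderings of $X\oplus Y\oplus Z$ agree. It therefore suffices to compute one bracketing, and I would choose $(X\oplus Y)\oplus Z$.

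First step: by Definition~\ref{def:ambient-monoid},
\[
  X\oplus Y=(a,0)\oplus(-b,0)=\bigl(a-b,\;\Cfun(a,-b)\bigr).
\]
Second step:
\[
  (X\oplus Y)\oplus Z=\bigl(a-b,\;\Cfun(a,-b)+c+\Cfun(a-b,0)\bigr).
\]
Here $\Cfun(a-b,0)=|a-b|+0-|a-b|=0$, which is the same computation as in Lemma~\ref{lem:oplus-basic} after using the symmetry in Lemma~\ref{lem:C-basic}(2). This gives $(a-b,\,c+\Cfun(a,-b))$.

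As a sanity check, I would also compute the other bracketing directly. We have $Y\oplus Z=(-b,\,c+\Cfun(-b,0))=(-b,c)$, and then $X\oplus(Y\oplus Z)=(a-b,\,c+\Cfun(a,-b))$. This agrees with the first bracketing, as it must by the cocycle identity of Lemma~\ref{lem:C-cocycle}.

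There is no real obstacle here. The only point needing care is the vanishing of $\Cfun(r,0)$ for every real $r$. This is what ensures that the $\Lambda$-element $(0,c)$ contributes its stored mass $c$ additively without creating new cancellation mass.
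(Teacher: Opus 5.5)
Your proof is correct and follows the paper's argument essentially verbatim. You read off $X,Y,Z$ from the ambient embedding, compute $(X\oplus Y)\oplus Z$ using $\Cfun(a-b,0)=0$, and appeal to associativity of $(\Kamb,\oplus)$ for the other bracketings; your direct computation of $X\oplus(Y\oplus Z)$ is a harmless extra check.
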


\begin{proof}
The expressions for $X,Y,Z$ follow from
Definition~\ref{def:real-shadow-embedding}.  Since $\oplus$ is
associative and commutative (Proposition~\ref{prop:Kamb-assoc}), the
value of $X\oplus Y\oplus Z$ is independent of bracketing, but we
compute it explicitly for definiteness:
\begin{align*}
  X\oplus Y
  &= (a,0)\oplus(-b,0)\\
  &= \bigl(a-b,\; 0+0+\Cfun(a,-b)\bigr)\\
  &= (a-b,\Cfun(a,-b)).
\end{align*}
Then
\begin{align*}
  (X\oplus Y)\oplus Z
  &= (a-b,\Cfun(a,-b))\oplus(0,c)\\
  &= \bigl(a-b+0,\;
           \Cfun(a,-b)+c+\Cfun(a-b,0)\bigr).
\end{align*}
Since $\Cfun(r,0)=|r|+0-|r|=0$ for all $r\in\R$, we obtain
\[
  (X\oplus Y)\oplus Z
  = (a-b,\; c+\Cfun(a,-b)).
\]
By associativity of $\oplus$, this equals $X\oplus(Y\oplus Z)$ and any
other bracketing.
\end{proof}

Thus the ambient monoid records the triple by a single state: the net
real shadow $a-b$ together with a total cancellation mass
$c+\Cfun(a,-b)$.

\subsubsection*{Recovering $\Lambda$--mass from the ambient state}

To compare with the hyperaddition in $\Hnum$, we need a way of
extracting a $\Lambda$--magnitude from an ambient state.

\begin{definition}[Canonical $\Lambda$--projection]\label{def:Lambda-projection-ambient}
Define a map
\[
  P_\Lambda:\Kamb\longrightarrow H_\Lambda
\]
by
\[
  P_\Lambda(x,c)
  :=
  \begin{cases}
    0, & x=0,\ c=0,\\[2pt]
    (\Lambda,\,|x|+c), & \text{otherwise},
  \end{cases}
\]
where $H_\Lambda:=\{(\Lambda,a):a>0\}\cup\{0\}\subset\Hnum$ is the
$\Lambda$--sector together with $0$.
\end{definition}

Intuitively, $P_\Lambda$ forgets the sign of the real shadow, takes its
absolute value, adds the stored cancellation mass, and realizes the
result as a $\Lambda$--magnitude.  The exceptional case $(0,0)\mapsto0$
is just a harmless convention.

\begin{lemma}[Basic properties of $P_\Lambda$]\label{lem:PLambda-basic}
For all $(x,c)\in\Kamb$:
\begin{enumerate}
  \item $P_\Lambda(x,c)\in H_\Lambda$.
  \item If $(x,c)\ne(0,0)$, then $P_\Lambda(x,c)$ has sign $\Lambda$
        and magnitude $|x|+c$.
  \item $P_\Lambda$ is homogeneous for nonnegative scalars, i.e.,
        for $\lambda\ge 0$,
        \[
          P_\Lambda\bigl(\lambda x,\lambda c\bigr)
          =
          \begin{cases}
            0, & \lambda=0,\ (x,c)=(0,0),\\[2pt]
            (\Lambda,\lambda(|x|+c)), & \text{otherwise}.
          \end{cases}
        \]
\end{enumerate}
\end{lemma}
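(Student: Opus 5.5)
The plan is to unwind Definition~\ref{def:Lambda-projection-ambient} directly and split into the two cases appearing there, namely $(x,c)=(0,0)$ and $(x,c)\neq(0,0)$. No earlier structural result is needed beyond the facts that $|x|\ge0$ with equality only for $x=0$ (Lemma~\ref{lem:basic-abs}(1)) and that $c\ge0$ for $(x,c)\in\Kamb=\R\times\R_{\ge0}$.

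For (1) and (2), consider first $(x,c)=(0,0)$. Then $P_\Lambda(x,c)=0$, which lies in $H_\Lambda$, since in Definition~\ref{def:Lambda-projection-ambient} the set $H_\Lambda$ is taken to include $0$.

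Now suppose $(x,c)\neq(0,0)$. The key point is that the magnitude $|x|+c$ is strictly positive. Indeed, $|x|\ge0$ and $c\ge0$, and the sum can vanish only if $|x|=0$ and $c=0$, i.e.\ $(x,c)=(0,0)$. Hence $(\Lambda,|x|+c)$ is a genuine element of $\{\Lambda\}\times\Rp\subset H_\Lambda$. Its sign is $\Lambda$ and its magnitude is $|x|+c$, by Definition~\ref{def:H-universe}. This proves (2) and completes (1).

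For (3), fix $\lambda\ge0$ and note that $(\lambda x,\lambda c)\in\Kamb$. The proof again splits into two cases.
\begin{enumerate}
  \item If $(\lambda x,\lambda c)=(0,0)$, the value is $0$. This case occurs exactly when $\lambda=0$ or $(x,c)=(0,0)$.
  \item Otherwise, by (2) the value is $(\Lambda,|\lambda x|+\lambda c)$. Using $|\lambda x|=\lambda|x|$ for $\lambda\ge0$ (Lemma~\ref{lem:basic-abs}(2)), this equals $(\Lambda,\lambda(|x|+c))$.
\end{enumerate}

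The main obstacle is purely bookkeeping: the displayed case split in (3) lists the zero case as ``$\lambda=0,\ (x,c)=(0,0)$''. Read literally as a conjunction, this misses the situation $\lambda=0$ with $(x,c)\neq(0,0)$. There the formula $(\Lambda,\lambda(|x|+c))=(\Lambda,0)$ is not an element of $\Hnum$, while the true value is $0$. I would therefore interpret the condition as ``$\lambda=0$ or $(x,c)=(0,0)$'', i.e.\ as $(\lambda x,\lambda c)=(0,0)$, and state explicitly that this reading is the one being proved. Equivalently, for $\lambda>0$ the formula holds whenever $(x,c)\neq(0,0)$, and the value is $0$ in all remaining cases.
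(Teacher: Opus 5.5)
Your proposal is correct and takes essentially the same approach as the paper: unwind the definition of $P_\Lambda$, split on whether the argument equals $(0,0)$, and use $|\lambda x|=\lambda|x|$ for $\lambda\ge0$. Your reading of the zero case in (3) as ``$\lambda=0$ or $(x,c)=(0,0)$'' is also what the paper's proof implicitly uses, since it sends every $\lambda=0$ to $0$; your check that $|x|+c>0$ whenever $(x,c)\ne(0,0)$ supplies a detail the paper simply calls immediate.
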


\begin{proof}
(1) and (2) are immediate from Definition~\ref{def:Lambda-projection-ambient}.
For (3), if $\lambda=0$, then $(\lambda x,\lambda c)=(0,0)$ and
$P_\Lambda(0,0)=0$.  If $\lambda>0$ and $(x,c)\ne(0,0)$, then
\[
  P_\Lambda\bigl(\lambda x,\lambda c\bigr)
  = (\Lambda,|\lambda x|+\lambda c)
  = (\Lambda,\lambda(|x|+c)).
\]
\end{proof}

We can now express the two hyper-sums $(x\op y)\op z$ and
$x\op(y\op z)$ for the triple $(+,a),(-,b),(\Lambda,c)$ in terms of
$P_\Lambda$ and the ambient sum $X\oplus Y\oplus Z$.

\begin{proposition}[Ambient interpretation of the two bracketings]
\label{prop:ambient-two-bracketings}
Let $x=(+,a)$, $y=(-,b)$, $z=(\Lambda,c)$ with $a,b,c>0$, and let
$X,Y,Z\in\Kamb$ be as in Lemma~\ref{lem:ambient-sum-triple}.  Set
\[
  U := X\oplus Y\oplus Z
  = (a-b,\; c+\Cfun(a,-b)).
\]
Then:
\begin{enumerate}
  \item The right--associated hyper-sum corresponds to the full
        $\Lambda$--projection of $U$:
  \[
    x\op(y\op z) = \{P_\Lambda(U)\}
    = \{(\Lambda,\,|a-b|+c+\Cfun(a,-b))\}
    = \{(\Lambda,a+b+c)\}.
  \]
  \item The left--associated hyper-sum corresponds to the partial
        $\Lambda$--projection that only uses the original $\Lambda$--mass
        $c$:
  \[
    (x\op y)\op z
    = \{(\Lambda,\,|a-b|+c)\}
    = \{P_\Lambda(a-b,c)\}.
  \]
\end{enumerate}
In particular, the associativity defect
\[
  \AssocCurv(a,b,c)
  = (|a-b|+c+\Cfun(a,-b)) - (|a-b|+c)
  = \Cfun(a,-b)
\]
is precisely the additional cancellation mass contributed by the
ambient state $U$ beyond the original $\Lambda$--reserve $c$.
\end{proposition}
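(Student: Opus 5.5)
The proof is a direct computation that combines Lemma~\ref{lem:assoc-bracketings} (equivalently Theorem~\ref{thm:defect-summary}(1)) with Lemma~\ref{lem:ambient-sum-triple} and Definition~\ref{def:Lambda-projection-ambient}. First I will record the ambient state $U=(a-b,\,c+\Cfun(a,-b))$ from Lemma~\ref{lem:ambient-sum-triple}. Its second coordinate is at least $c>0$, so $U\neq(0,0)$. By Lemma~\ref{lem:PLambda-basic}(2), $P_\Lambda(U)=(\Lambda,\,|a-b|+c+\Cfun(a,-b))$.

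For item (1) I will evaluate this magnitude with Lemma~\ref{lem:C-basic}(5), which gives $\Cfun(a,-b)=a+b-|a-b|$. Then $|a-b|+c+\Cfun(a,-b)=a+b+c$. Lemma~\ref{lem:assoc-bracketings} states that $x\op(y\op z)=\{(\Lambda,a+b+c)\}$, so the two sides agree and item (1) follows.

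For item (2) I will use the same lemma, which gives $(x\op y)\op z=\{(\Lambda,c+|a-b|)\}$. The pair $(a-b,c)$ also lies in $\Kamb$ and is nonzero because $c>0$. So $P_\Lambda(a-b,c)=(\Lambda,|a-b|+c)$ by definition, and the second identity follows.

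The defect formula is then obtained by subtracting the two magnitudes just computed. This gives $\Cfun(a,-b)$, in agreement with Proposition~\ref{prop:defect-c-mass}. There is no real obstacle here. The only point needing care is to check that neither argument of $P_\Lambda$ equals $(0,0)$, which would trigger the exceptional clause; this is excluded by $c>0$, including the case $a=b$.
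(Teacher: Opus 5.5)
Your proposal is correct and follows the paper's proof essentially step for step. You compute $U$ from Lemma~\ref{lem:ambient-sum-triple}, apply the definition of $P_\Lambda$, rewrite $|a-b|+\Cfun(a,-b)=a+b$ via Lemma~\ref{lem:C-basic}(5), and compare with the bracketings from Lemma~\ref{lem:assoc-bracketings} (which is Theorem~\ref{thm:defect-summary}(1)). Your use of $c>0$ to rule out the exceptional clause of $P_\Lambda$, for both $U$ and $(a-b,c)$, is slightly more careful than the paper. The paper justifies $U\neq(0,0)$ only by appeal to $a,b>0$ and does not address $(a-b,c)$ at all.
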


\begin{proof}
(1) By Lemma~\ref{lem:ambient-sum-triple},
\[
  U = (a-b,\; c+\Cfun(a,-b)).
\]
Since $a,b>0$ and $(a,b)\ne(0,0)$, we have $U\ne(0,0)$, so
\[
  P_\Lambda(U)
  = (\Lambda,\,|a-b|+c+\Cfun(a,-b)).
\]
But by Theorem~\ref{thm:defect-summary},
\[
  x\op(y\op z) = \{(\Lambda,m_R)\}
  \quad\text{with }m_R=a+b+c.
\]
Using Lemma~\ref{lem:C-basic}(5),
\[
  |a-b|+\Cfun(a,-b)
  = |a-b| + (a+b-|a-b|)
  = a+b,
\]
so
\[
  |a-b|+c+\Cfun(a,-b) = a+b+c = m_R.
\]
Thus $x\op(y\op z)=\{P_\Lambda(U)\}$.

(2) The left--associated sum is given by
\[
  (x\op y)\op z = \{(\Lambda,m_L)\},\qquad m_L=c+|a-b|
\]
by Theorem~\ref{thm:defect-summary}.  On the other hand,
\[
  P_\Lambda(a-b,c)
  = (\Lambda,\,|a-b|+c),
\]
which coincides with $(\Lambda,m_L)$.

The last displayed identity for $\AssocCurv(a,b,c)$ follows by subtraction:
\[
  \AssocCurv(a,b,c)
  = m_R-m_L
  = (|a-b|+c+\Cfun(a,-b)) - (|a-b|+c)
  = \Cfun(a,-b).
\]
\end{proof}

Thus the right bracketing $x\op(y\op z)$ corresponds to ``using up''
all ambient cancellation mass stored in $U$, while the left bracketing
$(x\op y)\op z$ only uses the original $\Lambda$--reserve $c$ and
leaves the extra mass $\Cfun(a,-b)$ unconverted.  The associativity
defect is exactly the difference between these two ways of reading the
same ambient state.

\subsubsection*{Conceptual summary}

We can summarize the ambient picture of associativity failure as
follows.

\begin{theorem}[Ambient interpretation of associativity defect]
\label{thm:ambient-defect-summary}
Let $a,b,c>0$ and consider
\[
  x=(+,a),\quad y=(-,b),\quad z=(\Lambda,c)\in\Hnum,
\]
with embeddings $X=\iota(x)$, $Y=\iota(y)$, $Z=\iota(z)\in\Kamb$ and
ambient sum
\[
  U := X\oplus Y\oplus Z\in\Kamb.
\]
Then:
\begin{enumerate}
  \item $U$ is independent of the bracketing of $(X,Y,Z)$ and equals
  \[
    U=(a-b,\; c+\Cfun(a,-b)).
  \]
  \item The left and right hyper-sums in $\Hnum$ are obtained from
        \emph{different} $\Lambda$--projections of ambient data:
  \[
    (x\op y)\op z
    = \{P_\Lambda(a-b,c)\},
    \qquad
    x\op(y\op z)
    = \{P_\Lambda(U)\}.
  \]
  \item The associativity defect is exactly the extra cancellation
        mass recorded in the second coordinate of $U$:
  \[
    \AssocCurv(a,b,c)
    = \Cfun(a,-b)
    = \text{(second coordinate of $U$)} - c.
  \]
\end{enumerate}
\end{theorem}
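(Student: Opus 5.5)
The theorem is a repackaging of results already proved in this subsection, so the plan is to assemble it item by item rather than compute anything new.

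For item (1), I will invoke Lemma~\ref{lem:ambient-sum-triple}. It gives $X=(a,0)$, $Y=(-b,0)$, $Z=(0,c)$ and computes $(X\oplus Y)\oplus Z=(a-b,\,c+\Cfun(a,-b))$. Independence of the bracketing then follows from Proposition~\ref{prop:Kamb-assoc}, i.e.\ associativity of $\oplus$, which ultimately rests on the cocycle identity of Lemma~\ref{lem:C-cocycle}. For completeness I may also compute the other bracketing directly as a check. First, $Y\oplus Z=(-b,\,c+\Cfun(-b,0))=(-b,c)$. Then $X\oplus(Y\oplus Z)=(a-b,\,c+\Cfun(a,-b))$, which agrees.

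Item (2) is exactly Proposition~\ref{prop:ambient-two-bracketings}. The right bracketing equals $\{(\Lambda,a+b+c)\}$ by Lemma~\ref{lem:assoc-bracketings}. To identify this with $\{P_\Lambda(U)\}$, first note $U\neq(0,0)$, since its second coordinate is at least $c>0$. Then $P_\Lambda(U)$ has magnitude $|a-b|+c+\Cfun(a,-b)$, and this equals $a+b+c$ by Lemma~\ref{lem:C-basic}(5). The left bracketing equals $\{(\Lambda,c+|a-b|)\}$, which is $P_\Lambda(a-b,c)$ because $(a-b,c)\neq(0,0)$.

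Item (3) follows by subtracting the two magnitudes. This gives $\AssocCurv(a,b,c)=\Cfun(a,-b)$, as in Lemma~\ref{lem:assoc-defect-C} or Proposition~\ref{prop:defect-c-mass}. Reading off item (1), the second coordinate of $U$ is $c+\Cfun(a,-b)$, so subtracting $c$ yields the claim.

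There is no genuine obstacle. The only point needing care is checking that $P_\Lambda$ is never evaluated at the exceptional point $(0,0)$. This is guaranteed because $c>0$, including in the case $a=b$.
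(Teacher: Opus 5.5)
Your proposal is correct and follows the paper's proof, which cites Lemma~\ref{lem:ambient-sum-triple} for item (1) and Proposition~\ref{prop:ambient-two-bracketings} for items (2) and (3). Your extra checks are accurate but only make explicit what those results already contain: the direct computation of the other bracketing, and the observation that $P_\Lambda$ is never evaluated at $(0,0)$ because $c>0$.
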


\begin{proof}
(1) is Lemma~\ref{lem:ambient-sum-triple}.  (2) and (3) are precisely
the content of Proposition~\ref{prop:ambient-two-bracketings}.
\end{proof}

\begin{remark}[Ambient associativity vs.\ hypernumber nonassociativity]
\label{rem:ambient-vs-H}
The ambient monoid $(\Kamb,\oplus)$ is strictly associative
(Proposition~\ref{prop:Kamb-assoc}); all the nonassociativity of
$(\Hnum,\op)$ arises from the choice of how to interpret ambient
states back into hypernumbers, in particular how much of the available
cancellation mass is converted into $\Lambda$--magnitude at each step.
For the ordered $(+,-,\Lambda)$ pattern, the two natural bracketings
correspond to two natural ways of partially or fully applying
$P_\Lambda$ before or after aggregating cancellation in the ambient
monoid.  The difference between these interpretations is measured
exactly by the cancellation mass $\Cfun(a,-b)$, which is the
associativity defect $\AssocCurv(a,b,c)$.

This viewpoint suggests that $(\Hnum,\op)$ can be seen as a
nonassociative ``shadow'' of the associative ambient object
$(\Kamb,\oplus)$, obtained by applying a non-linear interpretation map
back to the hypernumber world.  A systematic exploration of such
interpretations, and their possible generalizations to $n$--sign
systems, is a natural direction for future work.
\end{remark}

\subsection{Ambient representation and an obstruction theorem}
\label{subsec:ambient-nogo}

In Sections~\ref{subsec:ambient-monoid} and~\ref{subsec:ambient-link-H}
we constructed an associative ambient monoid $(\Kamb,\oplus)$ together
with an embedding $\iota:\Hnum\hookrightarrow\Kamb$ and a
$\Lambda$–projection $P_\Lambda:\Kamb\to H_\Lambda$ that explain the
associativity defect for ordered $(+,-,\Lambda)$–triples.  It is
natural to ask whether $(\Hnum,\op)$ could itself be \emph{derived} in
a clean way from some ambient commutative monoid (or hypergroup) via a
suitable interpretation map.  In this subsection we formalize this
idea and prove an obstruction theorem: there is no ambient representation of
$\Hnum$ that is simultaneously bracket–independent and compatible with
all hyper–sums.

\subsubsection*{Ambient representations with bracket–independent decoding}

We begin with a general notion of ambient representation for a
commutative hypermagma, which captures the idea that the hyper–sum
should be obtained by encoding elements into an associative monoid,
adding them there, and then decoding back in a way that is independent
of the bracketing.

\begin{definition}[Ambient representation with bracket–independent decoding]
\label{def:ambient-representation}
Let $(H,\boxplus)$ be a commutative hypermagma and let
$(K,\oplus,\mathbf{0})$ be a commutative monoid.  An
\emph{ambient representation} of $(H,\boxplus)$ in $K$ with
bracket–independent decoding consists of:
\begin{enumerate}
  \item an injective map $\iota:H\to K$ (the \emph{encoding}), and
  \item a map $D:K\to H$ (the \emph{decoding}),
\end{enumerate}
such that:
\begin{enumerate}
  \item[(AR1)] $D\circ\iota = \mathrm{id}_H$ (so $D$ is a left inverse
               to $\iota$);
  \item[(AR2)] for all $x,y\in H$,
  \[
    D\bigl(\iota(x)\oplus\iota(y)\bigr)
    \in x\boxplus y;
  \]
  \item[(AR3)] for all $x,y,z\in H$, writing
  \[
    S(x,y,z) := \iota(x)\oplus\iota(y)\oplus\iota(z)\in K,
  \]
  one has
  \[
    D\bigl(S(x,y,z)\bigr)
    \in (x\boxplus y)\boxplus z
       \;\cap\;
       x\boxplus(y\boxplus z).
  \]
\end{enumerate}
\end{definition}

Condition (AR2) says that the ambient two–fold sum refines the
hyper–sum: if we encode $x,y$, add in $K$, and decode back, the result
is one of the admissible hyper–sums $x\boxplus y$.  Condition (AR3)
is a compatibility requirement with associativity in $K$: since
$\oplus$ is associative, the triple ambient sum
$S(x,y,z)=\iota(x)\oplus\iota(y)\oplus\iota(z)$ does not depend on the
bracketing, so its decoding should be simultaneously compatible with
both bracketings $(x\boxplus y)\boxplus z$ and $x\boxplus(y\boxplus
z)$.  In particular $(x\boxplus y)\boxplus z$ and $x\boxplus(y\boxplus
z)$ must intersect nontrivially for every triple $(x,y,z)$.

The following lemma isolates this simple but important consequence.

\begin{lemma}[Bracket intersection requirement]
\label{lem:ambient-bracket-intersection}
Let $(H,\boxplus)$ be a commutative hypermagma and
$(K,\oplus,\mathbf{0},\iota,D)$ an ambient representation in the sense
of Definition~\ref{def:ambient-representation}.  Then for every triple
$(x,y,z)\in H^3$,
\[
  (x\boxplus y)\boxplus z
  \;\cap\;
  x\boxplus(y\boxplus z)
  \;\neq\;\emptyset.
\]
In particular, $(H,\boxplus)$ cannot have a triple $(x,y,z)$ at which
the two bracketings are disjoint subsets of $H$.
\end{lemma}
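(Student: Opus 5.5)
The plan is to argue directly from condition (AR3) of Definition~\ref{def:ambient-representation}, so the proof is essentially one line once the relevant element is named. Fix an arbitrary triple $(x,y,z)\in H^3$. First I form the ambient element
\[
  S(x,y,z)=\iota(x)\oplus\iota(y)\oplus\iota(z)\in K.
\]
This is well defined, with no bracketing ambiguity, because $(K,\oplus,\mathbf{0})$ is an associative commutative monoid. Next I apply the decoding map $D:K\to H$, which is defined on all of $K$, to obtain a specific element $w:=D(S(x,y,z))\in H$.

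Then I invoke (AR3), which asserts that $w$ lies in $(x\boxplus y)\boxplus z$ and in $x\boxplus(y\boxplus z)$ simultaneously. Hence $w$ belongs to the intersection of the two bracketings, so that intersection is nonempty. Since the triple was arbitrary, this proves the main claim.

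The ``in particular'' clause is just the contrapositive. If some triple had disjoint bracketings, no element $w$ as above could exist, which contradicts (AR3).

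There is no genuine obstacle here. The only point worth stating explicitly is that $D$ is a total map on $K$, so $w$ actually exists. That is what converts the membership statement in (AR3) into the nonemptiness statement. Conditions (AR1) and (AR2), and the injectivity of $\iota$, are not needed for this lemma; they will presumably matter in the subsequent obstruction theorem. That theorem will combine this lemma with Lemma~\ref{lem:assoc-bracketings}, which exhibits disjoint singleton bracketings $\{(\Lambda,c+|a-b|)\}\neq\{(\Lambda,a+b+c)\}$.
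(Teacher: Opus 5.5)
Your proposal is correct and matches the paper's proof: both form $S(x,y,z)=\iota(x)\oplus\iota(y)\oplus\iota(z)$, which is bracket-independent by associativity in $K$. Both then use (AR3) to place $D(S(x,y,z))$ in both bracketings, so the intersection is nonempty.
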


\begin{proof}
For any $x,y,z\in H$, define
\[
  S(x,y,z)
  := \iota(x)\oplus\iota(y)\oplus\iota(z)\in K.
\]
By associativity of $\oplus$, the element $S(x,y,z)$ is independent of
the bracketing of $\iota(x),\iota(y),\iota(z)$ in $K$.  By (AR3),
\[
  D\bigl(S(x,y,z)\bigr)
  \in (x\boxplus y)\boxplus z
  \quad\text{and}\quad
  D\bigl(S(x,y,z)\bigr)
  \in x\boxplus(y\boxplus z).
\]
Hence $D(S(x,y,z))$ lies in the intersection of the two bracketings,
which must therefore be nonempty.
\end{proof}

Conversely, if in a given hypermagma $(H,\boxplus)$ we can find a
triple $(x,y,z)$ for which $(x\boxplus y)\boxplus z$ and
$x\boxplus(y\boxplus z)$ are disjoint, then no ambient representation
with bracket–independent decoding can exist.

\subsubsection*{Application to the three--sign hypernumber system}

We now apply the general lemma to the three--sign hypernumber system
$(\Hnum,\op)$.

Recall from Proposition~\ref{prop:H-not-assoc} that
\[
  x=(+,1),\quad y=(-,1),\quad z=(\Lambda,1)
\]
give a concrete failure of associativity:
\[
  (x\op y)\op z
  = \{(\Lambda,1)\},
  \qquad
  x\op(y\op z)
  = \{(\Lambda,3)\},
\]
so
\[
  (x\op y)\op z
  \;\cap\;
  x\op(y\op z)
  = \emptyset.
\]
More generally, by Theorem~\ref{thm:defect-summary}, for arbitrary
$a,b,c>0$ we have
\begin{align*}
  (x\op y)\op z
  &= \{(\Lambda,m_L)\},
  & m_L &= c+|a-b|,\\
  x\op(y\op z)
  &= \{(\Lambda,m_R)\},
  & m_R &= a+b+c,
\end{align*}
with $m_L\ne m_R$, so the two singleton sets are always disjoint.

We can now state the obstruction theorem.

\begin{theorem}[Obstruction for bracket–independent ambient representation]
\label{thm:nogo-ambient}
There is no ambient representation of the three--sign hypernumber
system $(\Hnum,\op)$ in any commutative monoid $(K,\oplus)$ in the
sense of Definition~\ref{def:ambient-representation}.  Equivalently,
there do not exist a commutative monoid $(K,\oplus)$, an injective map
$\iota:\Hnum\to K$, and a decoding map $D:K\to\Hnum$ such that
\emph{all} of (AR1)--(AR3) hold.
\end{theorem}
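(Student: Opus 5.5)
The plan is a proof by contradiction that reduces everything to Lemma~\ref{lem:ambient-bracket-intersection}. Suppose a commutative monoid $(K,\oplus,\mathbf{0})$, an injective encoding $\iota:\Hnum\to K$ and a decoding $D:K\to\Hnum$ satisfy (AR1)--(AR3). Lemma~\ref{lem:ambient-bracket-intersection} applies to any such data, since its proof uses only associativity of $\oplus$ and condition (AR3). It therefore forces
\[
  (x\op y)\op z \;\cap\; x\op(y\op z)\neq\emptyset
\]
for every triple $(x,y,z)\in\Hnum^3$.

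It then suffices to exhibit one triple at which the two bracketings are disjoint. I will take the triple of Proposition~\ref{prop:H-not-assoc}, namely $x=(+,1)$, $y=(-,1)$, $z=(\Lambda,1)$, where $(x\op y)\op z=\{(\Lambda,1)\}$ and $x\op(y\op z)=\{(\Lambda,3)\}$. More generally, Lemma~\ref{lem:assoc-bracketings} (equivalently Theorem~\ref{thm:defect-summary}) gives the singletons $\{(\Lambda,c+|a-b|)\}$ and $\{(\Lambda,a+b+c)\}$ for any $a,b,c>0$. Their magnitudes differ by $\AssocCurv(a,b,c)=2\min(a,b)>0$, so the two singletons are distinct and hence disjoint. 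This contradicts the intersection requirement, so no such $(K,\oplus,\iota,D)$ exists.

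There is no genuine obstacle, since the computations are already done in Section~\ref{sec:assoc}. The only point needing care is to make explicit that the contradiction uses only (AR3) together with associativity of $K$; conditions (AR1), (AR2) and injectivity of $\iota$ are not needed. The theorem therefore holds even in the stronger form in which those hypotheses are dropped, and the ``equivalently'' clause in the statement is just a restatement of the definition.
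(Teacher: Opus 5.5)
Your proposal is correct and matches the paper's proof: you assume a representation exists, invoke Lemma~\ref{lem:ambient-bracket-intersection} to force every pair of bracketings to intersect, and contradict this with the triple $((+,1),(-,1),(\Lambda,1))$, whose bracketings are the disjoint singletons $\{(\Lambda,1)\}$ and $\{(\Lambda,3)\}$. Your added observation is also accurate: only (AR3) is used, together with associativity of $K$ to make $S(x,y,z)$ well defined, so (AR1), (AR2) and injectivity of $\iota$ can be dropped.
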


\begin{proof}
Suppose, for contradiction, that such $(K,\oplus,\iota,D)$ exist.
Then by Lemma~\ref{lem:ambient-bracket-intersection}, for every triple
$(x,y,z)\in\Hnum^3$ the two bracketings $(x\op y)\op z$ and
$x\op(y\op z)$ must intersect nontrivially.

However, as noted above, for
\[
  x=(+,1),\quad y=(-,1),\quad z=(\Lambda,1)
\]
we have
\[
  (x\op y)\op z
  = \{(\Lambda,1)\},
  \qquad
  x\op(y\op z)
  = \{(\Lambda,3)\},
\]
so
\[
  (x\op y)\op z
  \;\cap\;
  x\op(y\op z)
  = \emptyset.
\]
This contradicts the conclusion of Lemma~\ref{lem:ambient-bracket-intersection}.
Therefore no ambient representation satisfying (AR1)--(AR3) can exist.
\end{proof}

In particular, the associative ambient monoid $(\Kamb,\oplus)$
constructed in Section~\ref{subsec:ambient-monoid}, together with the
embedding $\iota:\Hnum\to\Kamb$ from
Definition~\ref{def:real-shadow-embedding}, cannot be completed by any
decoding map $D:\Kamb\to\Hnum$ to yield an ambient representation in
the sense of Definition~\ref{def:ambient-representation}.  The
obstruction is exactly the existence of strongly nonassociative
triples in $(\Hnum,\op)$ whose two bracketings have disjoint images.

\subsubsection*{Consequences and interpretation}

The obstruction theorem shows that the ambient picture developed in
Section~\ref{sec:ambient} cannot be upgraded to a full ``hidden
associative model'' for the three--sign hyperaddition: one cannot find
a globally bracket–independent decoding of ambient sums that recovers
all hyper–sums in $\Hnum$.  In particular:

\begin{itemize}[leftmargin=2em]
  \item Any attempt to model $(\Hnum,\op)$ as a coarse projection of
        an associative monoid or group must either
        \emph{(i)} restrict to a proper subset of triples (e.g.\ avoid
        the $(+,-,\Lambda)$ pattern), or
        \emph{(ii)} relax the compatibility conditions (AR2)–(AR3),
        for instance by allowing different decodings for different
        bracketings of the same ambient sum.
  \item The ambient cancellation monoid $(\Kamb,\oplus)$ should thus
        be viewed not as an exact associative cover of
        $(\Hnum,\op)$, but rather as a tool for \emph{measuring} the
        defect of associativity (via the second coordinate and the
        cancellation mass $\Cfun$), as formalized in
        Theorem~\ref{thm:ambient-defect-summary}.
  \item From a structural perspective, the obstruction result clarifies that
        the nonassociativity of $(\Hnum,\op)$ is intrinsic to the
        rules governing the $\Lambda$–sector and cannot be removed by
        simply ``lifting'' to an associative world and projecting
        back in a bracket–independent way while preserving all
        hyper–sums.
\end{itemize}

This motivates treating the ambient constructions as auxiliary
invariants for $(\Hnum,\op)$, rather than as an underlying associative
model.  In particular, it suggests that any $n$--sign generalization
with similar cancellation behaviour will necessarily exhibit an
intrinsic nonassociativity that cannot be repaired by a naive ambient
representation of the kind ruled out by
Theorem~\ref{thm:nogo-ambient}.


\section{Conclusion}
\label{sec:conclusion}

\noindent\textbf{Further directions.}
The present framework suggests several extensions: (i) a systematic study of
``enriched valuations'' into signed or extended hyperfields in the sense of
recent work on tropical extensions \cite{MaxwellSmith2024Extensions}; (ii)
connections with valuation theory for Krasner hyperfields and related spectral
constructions \cite{Linzi2025ValuationKrasner}; and (iii) comparisons with
matroidal and convexity phenomena over the sign and tropical hyperfields
\cite{BakerBowler2019MatroidsHyperfields,Loho2021SignedTropicalConvexity}.
\medskip

The hypernumber system $H$ introduced here provides a concrete model of a
commutative, cancellation--sensitive hyperaddition extending the real
field by a third ``neutral'' sign $\Lambda$.  The main structural inputs
are (i) the explicit description of additive inverses and the controlled
nonassociativity measured by the associator curvature $\AssocCurv$, (ii)
the canonical sign--layer envelope that records all admissible signs of
hypersums, and (iii) the ambient cancellation monoid together with the
obstruction theorem explaining why no associative additive structure can
extend $\op$ without losing essential cancellation information.

An extended version of the manuscript will contain additional material on
order, detailed computations, and visualizations; these are omitted here
to keep the exposition focused on the core algebraic structure.

\bibliographystyle{amsalpha}

\bibliography{three-sign-hypernumbers_submission_v2}

\end{document}